\newcommand{\hh}{{\boldsymbol h}}
\newcommand{\ii}{{\boldsymbol i}}
\newcommand{\jj}{{\boldsymbol j}}
\newcommand{\kk}{{\boldsymbol k}}
\newcommand{\nn}{{\boldsymbol n}}
\newcommand{\btheta}{\boldsymbol\theta}
\newcommand{\bu}{\mathbf 1}
\newcommand{\ee}{{\mathbf e}}
\newcommand{\xx}{{\mathbf x}}
\renewcommand{\SS}{{\mathbb S}}
\newcommand{\UU}{{\mathbb U}}
\newcommand{\VV}{{\mathbb V}}
\newcommand{\uu}{{\mathbf u}}
\newcommand{\vv}{{\mathbf v}}
\newtheorem{notation}[theorem]{Notation}
\title{Rectangular GLT sequences\thanks{%
Giovanni Barbarino is supported in part by an Academy of Finland grant (Suomen Akatemian p\"a\"at\"os 331240). Carlo Garoni acknowledges the MIUR Excellence Department Project awarded to the Department of Mathematics of the University of Rome Tor Vergata (CUP E83C18000100006), and the support obtained by the Beyond Borders Programme of the University of Rome Tor Vergata through the Project ASTRID (CUP E84I19002250005). All authors are members of the Research Group GNCS (Gruppo Nazionale per il Calcolo Scientifico) of INdAM (Istituto Nazionale di Alta Matematica).}}
\author{Giovanni Barbarino\footnotemark[2]
        \and Carlo Garoni\footnotemark[3]
        \and Mariarosa Mazza\footnotemark[4]
        \and Stefano~Serra-Capizzano\footnotemark[5]}
\shorttitle{Rectangular GLT sequences}
\begin{document}

\maketitle

\renewcommand{\thefootnote}{\fnsymbol{footnote}}

\footnotetext[2]{Department of Mathematics and Systems Analysis, Aalto University, Espoo, Finland\newline ({\tt giovanni.barbarino@aalto.fi}).}
\footnotetext[3]{Department of Mathematics, University of Rome Tor Vergata, Rome, Italy\newline ({\tt garoni@mat.uniroma2.it}).}
\footnotetext[4]{Department of Humanities and Innovation, University of Insubria, Como, Italy\newline ({\tt mariarosa.mazza@uninsubria.it}).}
\footnotetext[5]{Department of Humanities and Innovation, University of Insubria, Como, Italy; Department of Information Technology, Uppsala University, Uppsala, Sweden\newline ({\tt stefano.serrac@uninsubria.it}; {\tt stefano.serra@it.uu.se}).}

\begin{abstract}
The theory of generalized locally Toeplitz (GLT) sequences is a powerful apparatus for computing the asymptotic spectral distribution of square matrices $A_n$ arising form the discretization of differential problems. Indeed, as the mesh fineness parameter $n$ increases to $\infty$, the sequence $\{A_n\}_n$ often turns out to be a GLT sequence.
In this paper, motivated by recent applications, we further enhance the GLT apparatus by developing a full theory of rectangular GLT sequences as an extension of the theory of classical square GLT sequences. We also detail an example of application as an illustration of the potential impact of the theory presented herein.
\end{abstract}

\begin{keywords}
asymptotic distribution of singular values and eigenvalues, rectangular Toeplitz matrices, rectangular generalized locally Toeplitz matrices, discretization of differential equations, finite elements, tensor products, B-splines
\end{keywords}

\begin{AMS}
15A18, 15B05, 47B06, 65N30, 15A69, 65D07
\end{AMS}

\vspace{-19pt}
\tableofcontents
\vspace{8pt}


\section{Introduction}\label{introduzione}

Suppose that a linear differential problem is discretized using a mesh characterized by a fineness parameter $n$. In this case, the computation of the numerical solution reduces to solving a linear discrete problem---e.g., a linear system or an eigenvalue problem---identified by a square matrix $A_n$. The size of $A_n$ grows as $n$ increases, i.e., as the mesh is progressively refined, and ultimately we are left with a sequence of matrices $A_n$ such that $\textup{size}(A_n)\to\infty$ as $n\to\infty$.
What is often observed in practice is the following.
\begin{itemize}[leftmargin=*]
	\item As long as the considered mesh enjoys a certain structure, 
	the sequence $\{A_n\}_n$ is structured as well and, in particular, it falls in the class of generalized locally Toeplitz (GLT) sequences \cite{barbarinoREDUCED,bg,bgd,GLT-bookI,GLT-bookII}. Depending on the considered problem, $\{A_n\}_n$ could be a traditional scalar GLT sequence \cite{GLT-bookI}, a multilevel GLT sequence \cite{GLT-bookII}, a block GLT sequence \cite{bg}, a multilevel block GLT sequence \cite{bgd}, or a reduced (multilevel block) GLT sequence \cite{barbarinoREDUCED}.
	\item The eigenvalues of $A_n$ enjoy an asymptotic distribution described by a function $f$, in the sense of Definition~\ref{def-distribution}.
	The function $f$, known as the spectral symbol of $\{A_n\}_n$, normally coincides with the so-called kernel (or symbol) of the GLT sequence $\{A_n\}_n$ and can be computed precisely through the theory of GLT sequences.
\end{itemize}
The theory of GLT sequences is therefore an apparatus---to the best of the authors' knowledge, the most powerful apparatus---for computing the spectral symbol $f$ of sequences of matrices $\{A_n\}_n$ arising form the discretization of differential problems. The spectral symbol in turn is useful for several purposes, ranging from the design of appropriate solvers for the considered discretization matrices to the analysis of the spectral approximation properties of the considered discretization method; see \cite[Section~1.2]{bg} and \cite[Section~1.1]{GLT-bookI} for more details. 

Nowadays, the main references for the theory of GLT sequences and the related applications are the books \cite{GLT-bookI,GLT-bookII} and the review papers \cite{barbarinoREDUCED,bg,bgd}. We therefore refer the reader to these works for a comprehensive treatment of the topic, whereas for a more concise introduction to the subject, we recommend the papers \cite{SbMath,axioms,cime,aip}. From a theoretical point of view, among the main recent developments not included in \cite{barbarinoREDUCED,bg,bgd,GLT-bookI,GLT-bookII}, we mention the equivalence between GLT sequences and measurable functions \cite{GiovanniLAA2017}, the normal form of GLT sequences \cite{GiovanniNF}, the perturbation results for GLT sequences \cite{barbarinoNH}, the analysis of the connections between the spectral symbol and the spectrum of the operator associated with the considered differential problem \cite{bianchi,bianchi0,Tom-paper}, and the first ``bridge'' between spectral symbols and spectral analysis of graphs/networks \cite{MJM}. From an applicative point of view, among the main recent developments not included in \cite{barbarinoREDUCED,bg,bgd,GLT-bookI,GLT-bookII}, we mention the application to fractional differential equations \cite{DMS,DMS2} and incompressible Navier--Stokes equations \cite{DFF,NM-iNS}.

Despite the remarkable development that the theory of GLT sequences has reached nowadays, recent applications \cite{ashkanCMAME,NM-iNS} suggested the need for a notion of rectangular GLT sequences in order to further enhance the GLT apparatus. In this paper, we introduce such a notion and develop a full theory of rectangular (multilevel block) GLT sequences as an extension of the theory of classical square (multilevel block) GLT sequences.
We also detail an example of application as an illustration of the potential impact of the theory presented herein.

To give an a priori flavor of the relevance of the theory of rectangular GLT sequences, consider the applications that inspired this paper, i.e., the Taylor--Hood stable finite element (FE) discretization of the linear elasticity equations \cite{ashkanCMAME} and the staggered discontinuous Galerkin approximation of the incompressible Navier--Stokes equations \cite{NM-iNS}. In these cases, the numerical solution is computed by solving a linear system whose coefficient matrix has a saddle point structure of the form
\[ A_n = \begin{bmatrix}A_n(1,1) & A_n(1,2)\\ A_n(2,1) & A_n(2,2)\end{bmatrix}. \]
An efficient solution of this system relies on block Gaussian elimination and essentially reduces to solving a linear system whose coefficient matrix is the Schur complement
\[ S_n = A_n(2,2)-A_n(2,1)(A_n(1,1))^{-1}A_n(1,2); \]
see \cite[Section~5]{BGL}. What is relevant to us is that the sequences $\{A_n(i,j)\}_n$ are, up to minor transformations, square GLT sequences for $i=j$ and rectangular GLT sequences for $i\ne j$. As a consequence, the spectral distributions of $\{A_n\}_n$ and $\{S_n\}_n$ can be computed through the theory of rectangular GLT sequences, and especially properties \underline{\textbf{GLT\,4}} and \underline{\textbf{GLT\,6}} in Section~\ref{summ}, which allow us to ``connect'' GLT sequences with symbols of different size. In \cite{ashkanCMAME,NM-iNS}, the authors computed the spectral distributions of $\{A_n\}_n$ and $\{S_n\}_n$ by either resorting to the complicated technique of ``cutting matrices'' employed in the convergence analysis of multigrid methods or using specific results that are special cases of the theory developed herein. These approaches were adopted as workarounds to the lack of a theory of rectangular GLT sequences, they are somehow application dependent, and ultimately they are intrinsically ``wrong''. The ``right'' approach---more natural, more general, and simpler---is the one we will present in Section~\ref{appl}, which fully exploits the theory of rectangular GLT sequences.

The paper is organized as follows.
In Section~\ref{back}, we collect some background material along with preliminary notations and results.
In Section~\ref{eo}, we introduce and study the extension operator, i.e., the key tool for transferring results about square GLT sequences to rectangular GLT sequences.
In Section~\ref{sec:GLT}, we develop the theory of rectangular GLT sequences, which is then summarized in Section~\ref{summ}.
We conclude in Section~\ref{appl} with an illustrative example of application of the presented theory. 

\section{Preliminaries}\label{back}


\subsection{General notation and terminology}\label{noterm}


\begin{itemize}[leftmargin=*]

\item If $\alpha_1,\ldots,\alpha_n\in\mathbb R$, we define $\alpha_1\wedge\cdots\wedge\alpha_n=\min(\alpha_1,\ldots,\alpha_n)$ and $\alpha_1\vee\cdots\vee\alpha_n=\max(\alpha_1,\ldots,\alpha_n)$.


\item We denote by $\ee_1^{(n)},\ldots,\ee_n^{(n)}$ the vectors of the canonical basis of $\mathbb C^n$.

\item $O_{m,n}$, $O_n$ and $I_n$ denote, respectively, the $m\times n$ zero matrix, the $n\times n$ zero matrix and the $n\times n$ identity matrix. Sometimes, when the sizes can be inferred from the context, $O$ is used instead of $O_{m,n}$, $O_n$, and $I$ is used instead of $I_n$.

\item For every $r,s\in\mathbb N=\{1,2,\ldots\}$ and every $\alpha=1,\ldots,r$ and $\beta=1,\ldots,s$, we denote by $E_{\alpha\beta}^{(r,s)}$ the $r\times s$ matrix having 1 in position $(\alpha,\beta)$ and 0 elsewhere, and we set $E_{\alpha\beta}^{(s)}=E_{\alpha\beta}^{(s,s)}$. 

\item The eigenvalues of a matrix $X\in\mathbb C^{n\times n}$ are denoted by $\lambda_i(X)$, $i=1,\ldots,n$. The singular values of a matrix $X\in\mathbb C^{m\times n}$ are denoted by $\sigma_i(X)$, $i=1,\ldots,m\wedge n$. 
The maximum and minimum singular values of $X$ are also denoted by $\sigma_{\max}(X)$ and $\sigma_{\min}(X)$. 

\item For every $X\in\mathbb C^{m\times n}$, we denote by $\|X\|=\sigma_{\max}(X)$ the spectral (Euclidean) norm of $X$, by $X^*$ the conjugate transpose of $X$, and by $X^\dag$ the Moore--Penrose pseudoinverse of $X$.

\item $C_c(\mathbb C)$ (resp., $C_c(\mathbb R)$) is the space of complex-valued continuous functions defined on $\mathbb C$ (resp., $\mathbb R$) with bounded support.

\item $\mu_k$ denotes the Lebesgue measure in $\mathbb R^k$. Throughout this work, unless otherwise stated, all the terminology from measure theory (such as ``measurable set'', ``measurable function'', ``a.e.'', etc.)\ is always referred to the Lebesgue measure.

\item Let $D\subseteq\mathbb R^k$. 
An $r\times s$ matrix-valued function $f:D\to\mathbb C^{r\times s}$ is said to be measurable (resp., continuous, a.e.\ continuous, bounded, in $L^p(D)$, in $C^\infty(D)$, etc.)\ if its components $f_{\alpha\beta}:D\to\mathbb C$, $\alpha=1,\ldots,r$, $\beta=1,\ldots,s$, are measurable (resp., continuous, a.e.\ continuous, bounded, in $L^p(D)$, in $C^\infty(D)$, etc.). 

\item Let $f_m,f:D\subseteq\mathbb R^k\to\mathbb C^{r\times s}$ be measurable. We say that $f_m$ converges to $f$ in measure (resp., a.e., in $L^p(D)$, etc.)\ if $(f_m)_{\alpha\beta}$ converges to $f_{\alpha\beta}$ in measure (resp., a.e., in $L^p(D)$, etc.)\ for all $\alpha=1,\ldots,r$ and $\beta=1,\ldots,s$. 


\item We use a notation borrowed from probability theory to indicate sets. For example, if $f,g:D\subseteq\mathbb R^k\to\mathbb C^{r\times s}$, then  
$\{f$ has full rank$\}=\{\mathbf x\in D:f(\xx)$ has full rank$\}$, $\mu_k\{\|f-g\|\ge\varepsilon\}$ is the measure of the set $\{\mathbf{x}\in D:\|f(\mathbf{x})-g(\mathbf{x})\|\ge\varepsilon\}$, etc.



\end{itemize}

\subsection{Multi-index notation}\label{ssec:multi-index}

A multi-index $\ii$ of size $d$, also called a $d$-index, is simply a vector in $\mathbb Z^d$. 
\begin{itemize}[leftmargin=*]
	\item $\mathbf0,\bu,\mathbf2,\ldots$ are the vectors of all zeros, all ones, all twos, $\ldots$ (their size will be clear from the context).
	\item For any vector $\nn\in\mathbb R^d$, we set $N(\nn)=\prod_{i=1}^dn_i$ and we write $\nn\to\infty$ to indicate that $\min(\nn)\to\infty$. 
	\item If $\hh,\kk\in\mathbb R^d$, an inequality such as $\hh\le\kk$ means that $h_i\le k_i$ for all $i=1,\ldots,d$.
	\item If $\hh,\kk$ are $d$-indices such that $\hh\le\kk$, the $d$-index range $\{\hh,\ldots,\kk\}$ is the set $\{\ii\in\mathbb Z^d:\hh\le\ii\le\kk\}$. We assume for this set the standard lexicographic ordering:
\begin{equation*}
\Bigl[\ \ldots\ \bigl[\ [\ (i_1,\ldots,i_d)\ ]_{i_d=h_d,\ldots,k_d}\ \bigr]_{i_{d-1}=h_{d-1},\ldots,k_{d-1}}\ \ldots\ \Bigr]_{i_1=h_1,\ldots,k_1}.
\end{equation*}
For instance, in the case $d=2$ the ordering is
\begin{align*}
&(h_1,h_2),\,(h_1,h_2+1),\,\ldots,\,(h_1,k_2),\\
&(h_1+1,h_2),\,(h_1+1,h_2+1),\,\ldots,\,(h_1+1,k_2),\\
&\ldots\,\ldots\,\ldots,\,(k_1,h_2),\,(k_1,h_2+1),\,\ldots,\,(k_1,k_2).
\end{align*}
	\item When a $d$-index $\ii$ varies in a $d$-index range $\{\hh,\ldots,\kk\}$ (this is often written as $\ii=\hh,\ldots,\kk$), it is understood that $\ii$ varies from $\hh$ to $\kk$ following the lexicographic ordering. 
	\item If $\hh,\kk$ are $d$-indices with $\hh\le\kk$, the notation $\sum_{\ii=\hh}^\kk$ indicates the summation over all $\ii=\hh,\ldots,\kk$. 
	\item Operations involving $d$-indices (or general vectors with $d$ components) that have no meaning in the vector space $\mathbb R^d$ must always be interpreted in the componentwise sense. For instance, $\ii\jj=(i_1j_1,\ldots,i_dj_d)$, $\ii/\boldsymbol j=(i_1/j_1,\ldots,i_d/j_d)$, etc.

\end{itemize}

\subsection{Multilevel block matrices}
If $\nn\in\mathbb N^d$ and $X=[x_{\ii\jj}]_{\ii,\jj=\bu}^{\nn}$, where each $x_{\ii\jj}$ is a matrix of size $r\times s$, then $X$ is a matrix of size $N(\nn)r\times N(\nn)s$ whose ``entries'' $x_{\ii\jj}$ are $r\times s$ blocks indexed by a pair of $d$-indices $\ii,\jj$, both varying from $\bu$ to $\nn$ according to the lexicographic ordering. 
Following Tyrtyshnikov \cite[Section~6]{Ty96}, a matrix of this kind is referred to as a $d$-level $(r,s)$-block matrix (with level orders $\nn=(n_1,\ldots,n_d)$).

For every $\nn\in\mathbb N^d$ and every $\ii,\jj=\bu,\ldots,\nn$, we denote by $E_{\ii\jj}^{(\nn)}$ the $N(\nn)\times N(\nn)$ matrix having 1 in position $(\ii,\jj)$ and 0 elsewhere. If $X=[x_{\ii\jj}]_{\ii,\jj=\bu}^{\nn}$ is a $d$-level $(r,s)$-block matrix, then 
\begin{equation}\label{dlev_rsblock}
X=[x_{\ii\jj}]_{\ii,\jj=\bu}^{\nn}=\sum_{\ii,\jj=\bu}^\nn E_{\ii\jj}^{(\nn)}\otimes x_{\ii\jj},
\end{equation}
where $\otimes$ denotes the tensor product; see \eqref{tp:def}.

Two fundamental examples of multilevel block matrices are given by multilevel block Toeplitz matrices and multilevel block diagonal sampling matrices. We provide below the corresponding definitions.

\begin{definition}[multilevel block Toeplitz matrix]\label{mbTm}
Let $f:[-\pi,\pi]^d\to\mathbb C^{r\times s}$ be in $L^1([-\pi,\pi]^d)$ and let $\{f_\kk\}_{\kk\in\mathbb Z^d}$ be the Fourier coefficients of $f$ defined as follows:
\[ f_\kk=\frac1{(2\pi)^d}\int_{[-\pi,\pi]^d}f(\btheta)\,{\rm e}^{-{\rm i}\kk\cdot\btheta}{\rm d}\btheta\ \in\ \mathbb C^{r\times s},\quad\kk\in\mathbb Z^d, \]
where $\kk\cdot\btheta=k_1\theta_1+\ldots+k_d\theta_d$ and the integrals are computed componentwise. For every $\nn\in\mathbb N^d$, the $\nn$th ($d$-level $(r,s)$-block) Toeplitz matrix generated by $f$ is the $d$-level $(r,s)$-block matrix defined as
\[ T_\nn(f)=[f_{\ii-\jj}]_{\ii,\jj=\bu}^\nn=\sum_{\ii,\jj=\bu}^\nn E_{\ii\jj}^{(\nn)}\otimes f_{\ii-\jj}. \]
\end{definition}

\begin{definition}[multilevel block diagonal sampling matrix]\label{mbdsm}
Let $a:[0,1]^d\to\mathbb C^{r\times s}$. For every $\nn\in\mathbb N^d$, the $\nn$th ($d$-level $(r,s)$-block) diagonal sampling matrix generated by $a$ is the $d$-level $(r,s)$-block diagonal matrix defined as
\[ D_\nn(a)=\mathop{\textup{diag}}_{\ii=\bu,\ldots,\nn}a\Bigl(\frac\ii\nn\Bigr)=\sum_{\ii=\bu}^\nn E_{\ii\ii}^{(\nn)}\otimes a\Bigl(\frac\ii\nn\Bigr). \]
\end{definition}

\subsection{Multilevel block matrix-sequences}
Throughout this paper, a sequence of matrices is a sequence of the form $\{A_n\}_n$, where $n$ varies in some infinite subset of $\mathbb N$ and $A_n$ is a $d_n\times e_n$ matrix such that both $d_n$ and $e_n$ tend to $\infty$ as $n\to\infty$.
A $d$-level $(r,s)$-block matrix-sequence is a special sequence of matrices of the form $\{A_\nn\}_n$, where:
\begin{itemize}[leftmargin=*]
	\item $n$ varies in some infinite subset of $\mathbb N$;
	\item	$\nn=\nn(n)$ is a $d$-index in $\mathbb N^d$ which depends on $n$ and satisfies $\nn\to\infty$ as $n\to\infty$;
	\item $A_\nn$ is a matrix of size $N(\nn)r\times N(\nn)s$. 
\end{itemize}
If $\{A_\nn\}_n$ is a $d$-level $(r,s)$-block matrix-sequence, then $A_\nn$ can be seen as a $d$-level $(r,s)$-block matrix 
and can be written in block form as in \eqref{dlev_rsblock}:
\begin{equation*}
A_\nn=[a_{\ii\jj}^{(\nn)}]_{\ii,\jj=\bu}^{\nn}=\sum_{\ii,\jj=\bu}^{\nn}E_{\ii\jj}^{(\nn)}\otimes a_{\ii\jj}^{(\nn)},
\end{equation*}
where $a_{\ii\jj}^{(\nn)}$ is an $r\times s$ matrix. 
A $d$-level $(s,s)$-block matrix-sequence is also referred to as a $d$-level $s$-block matrix-sequence. 

\subsection{Tensor products}\label{sec:tp}

If $X,Y$ are matrices of any dimension, say $X\in\mathbb C^{m_1\times n_1}$ and $Y\in\mathbb C^{m_2\times n_2}$, the tensor (Kronecker) product of $X$ and $Y$ is the $m_1m_2\times n_1n_2$ matrix defined by
\begin{equation}\label{tp:def}
X\otimes Y=[x_{ij}Y]_{i=1,\ldots,m_1}^{j=1,\ldots,n_1}=\left[\begin{array}{ccc}
x_{11}Y & \cdots & x_{1n_1}Y\\
\vdots & & \vdots\\
x_{m_11}Y & \cdots & x_{m_1n_1}Y
\end{array}\right].
\end{equation}
The properties of tensor products that we need in this paper are collected below. For further properties, we refer the reader to \cite[Section~2.5]{GLT-bookII}; see also \cite[Section~2.2.2]{bgd}.

For all matrices $X,Y,Z$, we have
\begin{align}
\label{tensorT}(X\otimes Y)^T&=X^T\otimes Y^T,\\
\label{tensor_asso}X\otimes(Y\otimes Z)&=(X\otimes Y)\otimes Z.
\end{align}
For all matrices $X,Y,Z$ and all scalars $\alpha,\beta\in\mathbb C$, we have
\begin{equation}\label{tensor_bilin}
\left\{\begin{aligned}
(\alpha X+\beta Y)\otimes Z&=\alpha(X\otimes Z)+\beta(Y\otimes Z),\\
X\otimes(\alpha Y+\beta Z)&=\alpha(X\otimes Y)+\beta(X\otimes Z).
\end{aligned}\right.
\end{equation}
Whenever $X_1,X_2$ are multipliable and $Y_1,Y_2$ are multipliable, we have
\begin{equation}\label{tensorM}
(X_1\otimes Y_1)(X_2\otimes Y_2)=(X_1X_2)\otimes(Y_1Y_2).
\end{equation}
For every $k_1,k_2\in\mathbb N$, let $\zeta=[\zeta(1),\zeta(2),\ldots,\zeta(k_1k_2)]$ be the permutation of $[1,2,\ldots,k_1k_2]$ given by
\begin{align*}
\zeta&=[1,k_2+1,2k_2+1,\ldots,(k_1-1)k_2+1,\\
&\hphantom{=[}\;2,k_2+2,2k_2+2,\ldots,(k_1-1)k_2+2,\\
&\hphantom{=[}\;\ldots\,\ldots\,\ldots,\\
&\hphantom{=[}\;k_2,2k_2,3k_2\ldots,k_1k_2],
\end{align*}
i.e.,
\[ \zeta(i)=((i-1)\,\textup{mod}\,k_1)k_2+\left\lfloor\frac{i-1}{k_1}\right\rfloor+1,\quad i=1,\ldots,k_1k_2, \]
and let $P_{k_1,k_2}$ be the permutation matrix associated with $\zeta$, i.e., the $k_1k_2\times k_1k_2$ matrix whose rows are $(\ee_{\zeta(1)}^{(k_1k_2)})^T,\ldots,(\ee_{\zeta(k_1k_2)}^{(k_1k_2)})^T$ (in this order). 
Then,
\begin{equation}\label{Pk1k2}
P_{k_1,k_2}=\left[\begin{array}{c}
I_{k_1} \otimes (\ee_1^{(k_2)})^T \\[3pt]
I_{k_1} \otimes (\ee_2^{(k_2)})^T \\[3pt]
\vdots \\[3pt]
I_{k_1} \otimes (\ee_{k_2}^{(k_2)})^T
\end{array}\right]=\sum_{i=1}^{k_2}\ee_i^{(k_2)}\otimes I_{k_1}\otimes(\ee_i^{(k_2)})^T,
\end{equation}
and
\begin{equation}\label{tensorP}
Y\otimes X=P_{m_1,m_2}(X\otimes Y)P_{n_1,n_2}^T
\end{equation}
for all matrices $X\in\mathbb C^{m_1\times n_1}$ and $Y\in\mathbb C^{m_2\times n_2}$.

\subsection{Singular value and spectral distribution of a sequence of matrices}\label{sv-eig-d}\hfill


\begin{definition}[singular value and spectral distribution of a sequence of matrices]\label{def-distribution}
\begin{itemize}[leftmargin=*]
	\item Let $\{A_n\}_n$ be a sequence of matrices with $A_n$ of size $d_n\times e_n$, and let $f:D\subset\mathbb R^k\to\mathbb C^{r\times s}$ be measurable with $0<\mu_k(D)<\infty$. We say that $\{A_n\}_n$ has a (asymptotic) singular value distribution described by 
	$f$, and we write $\{A_n\}_n\sim_\sigma f$, if
\begin{equation*}
\lim_{n\rightarrow\infty}\frac1{d_n\wedge e_n}\sum_{i=1}^{d_n\wedge e_n}F(\sigma_i(A_n))= 
\frac1{\mu_k(D)}\int_D\frac{\sum_{i=1}^{r\wedge s}F(\sigma_i(f(\mathbf x)))}{r\wedge s}{{\rm d}}\mathbf x,\quad
\forall\,F\in C_c(\mathbb R).
\end{equation*}
In this case, the function $f$ is referred to as the singular value symbol of $\{A_n\}_n$.
	\item Let $\{A_n\}_n$ be a sequence of matrices with $A_n$ of size $d_n\times d_n$, and let $f:D\subset\mathbb R^k\to\mathbb C^{s\times s}$ be measurable with $0<\mu_k(D)<\infty$. We say that $\{A_n\}_n$ has a (asymptotic) spectral (or eigenvalue) distribution described by 
	$f$, and we write $\{A_n\}_n\sim_\lambda f$, if
\begin{equation*}
\lim_{n\rightarrow\infty}\frac1{d_n}\sum_{i=1}^{d_n}F(\lambda_i(A_n))= 
\frac1{\mu_k(D)}\int_D\frac{\sum_{i=1}^{s}F(\lambda_i(f(\mathbf x)))}{s}{{\rm d}}\mathbf x,\quad
\forall\,F\in C_c(\mathbb C).
\end{equation*}
In this case, the function $f$ is referred to as the spectral (or eigenvalue) symbol of $\{A_n\}_n$.
\end{itemize}
\end{definition}

Note that Definition~\ref{def-distribution} is well-posed by \cite[Lemma~2.5]{bgd}, which ensures that the functions $\mathbf x\mapsto\sum_{i=1}^{r\wedge s}F(\sigma_i(f(\mathbf x)))$ and $\mathbf x\mapsto\sum_{i=1}^{s}F(\lambda_i(f(\mathbf x)))$ are measurable. We refer the reader to \cite[Remark~2.9]{bg} for the informal meaning behind the singular value and spectral distribution of a sequence of matrices.
The next lemma will be used (only) in the proof of Theorem~\ref{GLT4-parte2}.

\begin{lemma}\label{sv-easy-version}
Let $\{A_n\}_n$ be a sequence of matrices with $A_n$ of size $d_n\times e_n$, and let $f:D\subset\mathbb R^k\to\mathbb C^{r\times s}$ be measurable with $0<\mu_k(D)<\infty$. If $\{A_n\}_n\sim_\sigma f$ and $f$ has full rank a.e.\ then
\[ \lim_{n\to\infty}\frac{\#\{i\in\{1,\ldots,d_n\wedge e_n\}:\sigma_i(A_n)=0\}}{d_n\wedge e_n}=0. \]
\end{lemma}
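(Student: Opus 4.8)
The plan is to argue by contradiction using a suitably chosen test function. Suppose the conclusion fails. Then there exists $\varepsilon_0>0$ and an infinite subsequence along which
\[ \frac{\#\{i\in\{1,\ldots,d_n\wedge e_n\}:\sigma_i(A_n)=0\}}{d_n\wedge e_n}\ge\varepsilon_0. \]
By passing to this subsequence (which does not affect the hypothesis $\{A_n\}_n\sim_\sigma f$), I may assume the inequality holds for all $n$. The idea is that a positive fraction of zero singular values of $A_n$ should ``charge'' the point $0$ with mass at least $\varepsilon_0$ in the limiting distribution, whereas the full-rank hypothesis on $f$ forces the limiting distribution to put no mass at $0$.

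To make this precise, I would fix a decreasing sequence of test functions $F_k\in C_c(\mathbb R)$ with $0\le F_k\le 1$, $F_k\equiv 1$ on $[-1/k,1/k]$ (or on $\{0\}$), and $\operatorname{supp}F_k\subseteq(-2/k,2/k)$. For each such $F_k$, the left-hand side of the distribution relation satisfies
\[ \frac1{d_n\wedge e_n}\sum_{i=1}^{d_n\wedge e_n}F_k(\sigma_i(A_n))\ge\frac{\#\{i:\sigma_i(A_n)=0\}}{d_n\wedge e_n}\ge\varepsilon_0, \]
since $F_k(0)=1$ and $F_k\ge 0$. Letting $n\to\infty$ and using $\{A_n\}_n\sim_\sigma f$ gives
\[ \frac1{\mu_k(D)}\int_D\frac{\sum_{i=1}^{r\wedge s}F_k(\sigma_i(f(\xx)))}{r\wedge s}\,{\rm d}\xx\ge\varepsilon_0 \]
for every $k$. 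On the other hand, $f$ has full rank a.e., so for a.e.\ $\xx\in D$ all $r\wedge s$ singular values $\sigma_i(f(\xx))$ are strictly positive, hence $F_k(\sigma_i(f(\xx)))\to 0$ as $k\to\infty$ for a.e.\ $\xx$ and every $i$. Since $0\le F_k\le 1$ and $\mu_k(D)<\infty$, dominated convergence yields that the integral tends to $0$ as $k\to\infty$, contradicting the bound $\ge\varepsilon_0$. This contradiction proves the lemma.

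The only point requiring a little care is the measurability and applicability of dominated convergence: the integrand $\xx\mapsto\sum_{i=1}^{r\wedge s}F_k(\sigma_i(f(\xx)))$ is measurable by \cite[Lemma~2.5]{bgd} (the same fact that makes Definition~\ref{def-distribution} well-posed), it is bounded by the constant $r\wedge s$, and $D$ has finite measure, so the hypotheses of dominated convergence are met. I do not expect any genuine obstacle here; the construction of the cutoff functions $F_k$ and the monotone/dominated passage to the limit are routine, and the heart of the argument is simply the observation that zero singular values of $A_n$ contribute a full unit to $F_k(\sigma_i(A_n))$ regardless of $k$, while the full-rank condition kills the corresponding contribution on the symbol side in the limit.
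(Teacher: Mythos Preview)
Your argument is correct and follows essentially the same approach as the paper: both bound the fraction of zero singular values by $\tfrac{1}{d_n\wedge e_n}\sum_i F(\sigma_i(A_n))$ for a test function $F\in C_c(\mathbb R)$ with $F(0)=1$ and shrinking support, pass to the limit in $n$ via the distribution relation, and then let the support shrink using dominated convergence and the full-rank hypothesis. The only cosmetic differences are that the paper argues directly (via $\limsup$) rather than by contradiction and uses the explicit tent functions $F_M(y)=(1-My)_+$ instead of generic bump functions.
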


We remark that the set $\{f$ has full rank$\}=\{\sigma_{\min}(f)\ne0\}$ is measurable because the function $\mathbf x\mapsto\sigma_{\min}(f(\mathbf x))$ is measurable by \cite[Lemma~2.5]{bgd}.

\begin{proof}
Suppose that $\{A_n\}_n\sim_\sigma f$. For every $M>0$, take $F_M\in C_c(\mathbb R)$ such that $F_M(y)=1-My$ for $0\le y\le1/M$ and $F_M(y)=0$ for $y\ge1/M$.
Since $F_M(0)=1$ and $F_M$ is a non-negative decreasing function on $[0,\infty)$, for every $M>0$ we have
\begin{align}
\label{take_limM}\frac{\#\{i\in\{1,\ldots,d_n\wedge e_n\}:\sigma_i(A_n)=0\}}{d_n\wedge e_n}&\le\frac{1}{d_n\wedge e_n}\sum_{i=1}^{d_n\wedge e_n}F_M(\sigma_i(A_n))\\
\notag&\xrightarrow{n\to\infty}\frac1{\mu_k(D)}\int_D\frac{\sum_{i=1}^{r\wedge s}F_M(\sigma_i(f(\xx)))}{r\wedge s}{\rm d}\xx\\
\notag&\le\frac1{\mu_k(D)}\int_DF_M(\sigma_{\min}(f(\xx))){\rm d}\xx.
\end{align}
Since $F_M(0)=1$ and $F_M\to0$ pointwise on $(0,\infty)$ as $M\to\infty$, the dominated convergence theorem yields
\begin{align*}
\frac1{\mu_k(D)}\int_DF_M(\sigma_{\min}(f(\xx))){\rm d}\xx\xrightarrow{M\to\infty}\frac{\mu_k\{\sigma_{\min}(f)=0\}}{\mu_k(D)},
\end{align*}
which is equal to 0 by the assumption that $f$ has full rank a.e. By taking first the (upper) limit as $n\to\infty$ and then the limit as $M\to\infty$ in \eqref{take_limM}, we get the thesis.
\end{proof}

%

We conclude this section with the definition of zero-distributed sequences.

\begin{definition}[zero-distributed sequence]\label{0d}
A sequence of matrices $\{Z_n\}_n$, with $Z_n$ of size $d_n\times e_n$, is said to be zero-distributed if $\{Z_n\}_n\sim_\sigma0$, i.e., 
\[ \lim_{n\to\infty}\frac1{d_n\wedge e_n}\sum_{i=1}^{d_n\wedge e_n}F(\sigma_i(Z_n))=F(0),\quad\forall\,F\in C_c(\mathbb R). \]
Note that, for any $r,s\ge1$, $\{Z_n\}_n\sim_\sigma0$ is equivalent to $\{Z_n\}_n\sim_\sigma O_{r,s}$.
\end{definition}

\subsection[Rectangular a.c.s.]{Rectangular a.c.s}
The notion of (square) approximating class of sequences (a.c.s.)\ plays a central role in the theory of GLT sequences and has been investigated in \cite{bg,bgd,GLT-bookI,GLT-bookII}; see also \cite{GiovanniLAA2017,ELA}. We here introduce the notion of a.c.s.\ for sequences of rectangular matrices.

\begin{definition}[rectangular a.c.s.]\label{a.c.s.}
Let $\{A_n\}_n$ be a sequence of matrices with $A_n$ of size $d_n\times e_n$, and let $\{\{B_{n,m}\}_n\}_m$ be a sequence of sequences of matrices with $B_{n,m}$ of size $d_n\times e_n$. We say that $\{\{B_{n,m}\}_n\}_m$ is an a.c.s.\ for $\{A_n\}_n$, and we write $\{B_{n,m}\}_n\xrightarrow{\textup{a.c.s.}}\{A_n\}_n$, if the following condition is met: for every $m$ there exists $n_m$ such that, for $n\ge n_m$,
\begin{equation*}
A_n=B_{n,m}+R_{n,m}+N_{n,m},\quad\textup{rank}(R_{n,m})\le c(m)(d_n\wedge e_n),\quad \|N_{n,m}\|\le\omega(m),
\end{equation*}
where $n_m,\,c(m),\,\omega(m)$ depend only on $m$ and $\lim_{m\to\infty}c(m)=\lim_{m\to\infty}\omega(m)=0$.
\end{definition}

In the case where $d_n=e_n$, Definition~\ref{a.c.s.} reduces to the definition of classical square a.c.s.\ \cite[Definition~2.31]{bgd}.

\subsection{GLT sequences}\label{sec:sGLT}

We summarize in this section the theory of square (multilevel block) GLT sequences, which is the basis for the theory of rectangular (multilevel block) GLT sequences developed in this paper. The content of this section can be found in \cite{bgd}.

A $d$-level $s$-block GLT sequence $\{A_\nn\}_n$ is a special $d$-level $s$-block matrix-sequence equipped with a measurable function $\kappa:[0,1]^d\times[-\pi,\pi]^d\to\mathbb C^{s\times s}$, the so-called symbol (or kernel). We use the notation $\{A_\nn\}_n\sim_{\textup{GLT}}\kappa$ to indicate that $\{A_\nn\}_n$ is a $d$-level $s$-block GLT sequence with symbol $\kappa$. 
\begin{enumerate}[leftmargin=35pt]
	\item[\textbf{GLT\,0.}] If $\{A_\nn\}_n\sim_{\textup{GLT}}\kappa$ then $\{A_\nn\}_n\sim_{\textup{GLT}}\xi$ if and only if $\kappa=\xi$ a.e.
	
	If $\kappa:[0,1]^d\times[-\pi,\pi]^d\to\mathbb C^{s\times s}$ is measurable and $\{\nn=\nn(n)\}_n$ is a sequence of $d$-indices such that $\nn\to\infty$ as $n\to\infty$ then there exists $\{A_\nn\}_n\sim_{\textup{GLT}}\kappa$. 
	\item[\textbf{GLT\,1.}] If $\{A_\nn\}_n\sim_{\textup{GLT}}\kappa$ then $\{A_\nn\}_n\sim_\sigma\kappa$. If $\{A_\nn\}_n\sim_{\textup{GLT}}\kappa$ and the matrices $A_\nn$ are Hermitian then $\kappa$ is Hermitian a.e.\ and $\{A_\nn\}_n\sim_\lambda\kappa$.
	\item[\textbf{GLT\,2.}] If $\{A_\nn\}_n\sim_{\textup{GLT}}\kappa$ and $A_\nn=X_\nn+Y_\nn$, where
	\begin{itemize}[leftmargin=*]
		\item every $X_\nn$ is Hermitian,
		\item $(N(\nn))^{-1/2}\|Y_\nn\|_2\to0$,
	\end{itemize}
	then $\{P_\nn^*A_\nn P_\nn\}_n\sim_{\sigma,\lambda}\kappa$ for every sequence $\{P_\nn\}_n$ such that $P_\nn\in\mathbb C^{N(\nn)s\times\delta_n}$, $P_\nn^*P_\nn=I_{\delta_n}$, $\delta_n\le N(\nn)s$, and $\delta_n/(N(\nn)s)\to1$.
	\item[\textbf{GLT\,3.}] For every sequence of $d$-indices $\{\nn=\nn(n)\}_n$ such that $\nn\to\infty$ as $n\to\infty$,
	\begin{itemize}[leftmargin=*]
		\item $\{T_\nn(f)\}_n\sim_{\textup{GLT}}\kappa(\xx,\btheta)=f(\btheta)$ if $f:[-\pi,\pi]^d\to\mathbb C^{s\times s}$ is in $L^1([-\pi,\pi]^d)$,
		\item $\{D_\nn(a)\}_n\sim_{\textup{GLT}}\kappa(\xx,\btheta)=a(\xx)$ if $a:[0,1]^d\to\mathbb C^{s\times s}$ is continuous a.e., 
		\item $\{Z_\nn\}_n\sim_{\textup{GLT}}\kappa(\xx,\btheta)=O_s$ if and only if $\{Z_\nn\}_n\sim_\sigma0$.
	\end{itemize}
	\item[\textbf{GLT\,4.}] If $\{A_\nn\}_n\sim_{\textup{GLT}}\kappa$ and $\{B_\nn\}_n\sim_{\textup{GLT}}\xi$ then
	\begin{itemize}[leftmargin=*]
		\item $\{A_\nn^*\}_n\sim_{\textup{GLT}}\kappa^*$,
		\item $\{\alpha A_\nn+\beta B_\nn\}_n\sim_{\textup{GLT}}\alpha\kappa+\beta\xi$ for all $\alpha,\beta\in\mathbb C$,
		\item $\{A_\nn B_\nn\}_n\sim_{\textup{GLT}}\kappa\xi$,
		\item $\{A_\nn^\dag\}_n\sim_{\textup{GLT}}\kappa^{-1}$ if $\kappa$ is invertible a.e.
	\end{itemize}
	\item[\textbf{GLT\,5.}] If $\{A_\nn\}_n\sim_{\textup{GLT}}\kappa$ and each $A_\nn$ is Hermitian, then $\{f(A_\nn)\}_n\sim_{\textup{GLT}}f(\kappa)$ for every continuous function $f:\mathbb C\to\mathbb C$.
	\item[\textbf{GLT\,6.}] If $\{A_{\nn,ij}\}_n$ is a $d$-level $s$-block GLT sequence with symbol $\kappa_{ij}$ for $i,j=1,\ldots,r$ and $A_\nn=[A_{\nn,ij}]_{i,j=1}^r$ then $\{(P_{r,N(\nn)}\otimes I_s)A_\nn(P_{r,N(\nn)}\otimes I_s)^T\}_n$ is a $d$-level $rs$-block GLT sequence with symbol $\kappa=[\kappa_{ij}]_{i,j=1}^r$, where $P_{k_1,k_2}$ is the permutation matrix defined in \eqref{Pk1k2}.
	\item[\textbf{GLT\,7.}] $\{A_\nn\}_n\sim_{\textup{GLT}}\kappa$ if and only if there exist 
	$\{B_{\nn,m}\}_n\sim_{\textup{GLT}}\kappa_m$ such that $\{B_{\nn,m}\}_n$\linebreak$\xrightarrow{\textup{a.c.s.}}\{A_\nn\}_n$ and $\kappa_m\to\kappa$ in measure.
	\item[\textbf{GLT\,8.}] Suppose $\{A_\nn\}_n\sim_{\textup{GLT}}\kappa$ and $\{B_{\nn,m}\}_n\sim_{\textup{GLT}}\kappa_m$. Then, $\{B_{\nn,m}\}_n\xrightarrow{\textup{a.c.s.}}\{A_\nn\}_n$ if and only if $\kappa_m\to\kappa$ in measure.
	\item[\textbf{GLT\,9.}] If $\{A_\nn\}_n\sim_{\textup{GLT}}\kappa$ then there exist functions $a_{i,m}$, $f_{i,m}$, $i=1,\ldots,N_m$, such~that
	\begin{itemize}[leftmargin=*]
		\item $a_{i,m}:[0,1]^d\to\mathbb C$ belongs to $C^\infty([0,1]^d)$ and $f_{i,m}$ is a trigonometric monomial in $\{{\rm e}^{{\rm i} \jj\cdot\btheta}E_{\alpha\beta}^{(s)}:\jj\in\mathbb Z^d,\ 1\le\alpha,\beta\le s\}$, 
		\item $\kappa_m(\xx,\btheta)=\sum_{i=1}^{N_m}a_{i,m}(\xx)f_{i,m}(\btheta)\to\kappa(\xx,\btheta)$ a.e., \vspace{3pt}
		\item $\{B_{\nn,m}\}_n=\bigl\{\sum_{i=1}^{N_m}D_\nn(a_{i,m}I_{s})T_\nn(f_{i,m})\bigr\}_n\xrightarrow{\textup{a.c.s.}}\{A_\nn\}_n$.
	\end{itemize}
\end{enumerate}

\section{Extension operator}\label{eo}
We introduce in this section the extension operator, which is essential to relate the theory of rectangular GLT sequences to the theory of square GLT sequences.
We also study some properties of this operator that will be needed later on.

\subsection{Definition of extension operator}
In what follows, if $a,t\in\mathbb N$ and $a\le t$, we denote by $\pi_{a,t}$ the $a\times t$ matrix given by $\pi_{a,t}=[\,I_a\,|\,O\,]$.

\begin{definition}[extension operator]
Let $r,s,t$ be positive integers such that $t\ge r\vee s$.
\begin{itemize}[leftmargin=*]
	\item We define the extension operator $E_{r,s}^t:\mathbb C^{r\times s}\to\mathbb C^{t\times t}$ as the linear operator that extends each $r\times s$ matrix to a larger $t\times t$ matrix by adding zero columns to the right and zero rows below:
	\begin{equation}\label{Erst(x)}
	E_{r,s}^t(x)=\left[\begin{array}{cc}x & O\\ O & O\end{array}\right]=\pi_{r,t}^Tx\pi_{s,t}.
	\end{equation}
	\item With some abuse of notation, we define the extension operator $E_{r,s}^t$ also for multilevel $(r,s)$-block matrices. If
	\[ X=[x_{\ii\jj}]_{\ii,\jj=\bu}^\nn=\sum_{\ii,\jj=\bu}^\nn E_{\ii\jj}^{(\nn)}\otimes x_{\ii\jj} \]
	is a $d$-level $(r,s)$-block matrix, 
	then each ``entry'' $x_{\ii\jj}$ is an $r\times s$ block and we define $E_{r,s}^t(X)$ as the $d$-level $t$-block matrix 
	obtained from $X$ by just adding zero columns to the right and zero rows below each block $x_{\ii\jj}$:
	\begin{equation}\label{Erst(X)}
	E_{r,s}^t(X)=[E_{r,s}^t(x_{\ii\jj})]_{\ii,\jj=\bu}^\nn=\sum_{\ii,\jj=\bu}^\nn E_{\ii\jj}^{(\nn)}\otimes E_{r,s}^t(x_{\ii\jj}).
	\end{equation}
\end{itemize}
In the case where $r=s$, we use the notation $E_s^t$ instead of $E_{s,s}^t$ for simplicity.
\end{definition}

By the properties \eqref{tensorT}, \eqref{tensorM}, \eqref{tensorP} of tensor products, for every $d$-level $(r,s)$-block matrix
\begin{align*}
X&=\sum_{\ii,\jj=\bu}^\nn E_{\ii\jj}^{(\nn)}\otimes x_{\ii\jj}=P_{r,N(\nn)}\Biggl(\sum_{\ii,\jj=\bu}^\nn x_{\ii\jj}\otimes E_{\ii\jj}^{(\nn)}\Biggr)P_{s,N(\nn)}^T
\end{align*}
we have
{\allowdisplaybreaks\begin{align*}
E_{r,s}^t(X)&=\sum_{\ii,\jj=\bu}^\nn E_{\ii\jj}^{(\nn)}\otimes E_{r,s}^t(x_{\ii\jj})=P_{t,N(\nn)}\Biggl(\sum_{\ii,\jj=\bu}^\nn E_{r,s}^t(x_{\ii\jj})\otimes E_{\ii\jj}^{(\nn)}\Biggr)P_{t,N(\nn)}^T\\
&=P_{t,N(\nn)}\Biggl(\sum_{\ii,\jj=\bu}^\nn \pi_{r,t}^Tx_{\ii\jj}\pi_{s,t}\otimes E_{\ii\jj}^{(\nn)}\Biggr)P_{t,N(\nn)}^T\\
&=P_{t,N(\nn)}(\pi_{r,t}^T\otimes I_{N(\nn)})\Biggl(\sum_{\ii,\jj=\bu}^\nn x_{\ii\jj}\otimes E_{\ii\jj}^{(\nn)}\Biggr)(\pi_{s,t}\otimes I_{N(\nn)})P_{t,N(\nn)}^T\\
&=P_{t,N(\nn)}\left[\begin{array}{cc}\sum_{\ii,\jj=\bu}^\nn x_{\ii\jj}\otimes E_{\ii\jj}^{(\nn)} & O\\ O & O\end{array}\right]P_{t,N(\nn)}^T\\
&=P_{t,N(\nn)}\left[\begin{array}{cc}P_{r,N(\nn)}^TXP_{s,N(\nn)} & O\\ O & O\end{array}\right]P_{t,N(\nn)}^T,
\end{align*}}%
i.e.,
\begin{align}\label{crucial_formula}
E_{r,s}^t(X)&=Q_{r,t,N(\nn)}\left[\begin{array}{cc}X & O\\ O & O\end{array}\right]Q_{s,t,N(\nn)}^T,
\end{align}
where
\[ Q_{a,t,N(\nn)}=P_{t,N(\nn)}\left[\begin{array}{cc}P_{a,N(\nn)}^T & O\\ O & I_{N(\nn)(t-a)}\end{array}\right] \]
is an $N(\nn)t\times N(\nn)t$ permutation matrix for any $a\in\mathbb N$ with $a\le t$. Equation~\eqref{crucial_formula} can be seen as a definition of $E_{r,s}^t(X)$ alternative to \eqref{Erst(X)}.


\subsection{Algebraic properties}
As it is clear from \eqref{Erst(x)} and \eqref{crucial_formula}, the extension operator $E_{r,s}^t$ is linear on both $\mathbb C^{r\times s}$ and the space 
of $d$-level $(r,s)$-block matrices with fixed level orders $\nn$. Moreover, $E_{r,s}^t$ changes neither the rank nor the norm of the $d$-level $(r,s)$-block matrix $X$ to which it is applied:
\begin{align*}
\textup{rank}(E_{r,s}^t(X))&=\textup{rank}(X),\\
\|E_{r,s}^t(X)\|&=\|X\|.
\end{align*}
If $x\in\mathbb C^{r\times s}$ and $X=[x_{\ii\jj}]_{\ii,\jj=\bu}^\nn$ is a $d$-level $(r,s)$-block matrix then, for every $t\ge r\vee s$,
\begin{align}
\label{Ersx*}(E_{r,s}^t(x))^*&=(\pi_{r,t}^Tx\pi_{s,t})^*=\pi_{s,t}^Tx^*\pi_{r,t}=E_{s,r}^t(x^*),\\
\label{ErsX*}(E_{r,s}^t(X))^*&=\biggl(Q_{r,t,N(\nn)}\left[\begin{array}{cc}X & O\\ O & O\end{array}\right]Q_{s,t,N(\nn)}^T\biggr)^*\\
\notag&=Q_{s,t,N(\nn)}\left[\begin{array}{cc}X^* & O\\ O & O\end{array}\right]Q_{r,t,N(\nn)}^T=E_{s,r}^t(X^*).
\end{align}
If $u\ge t\ge r\vee s$ then, for every $x\in\mathbb C^{r\times s}$,
\begin{align}\label{EuEtx}
E_t^u(E_{r,s}^t(x))=E_{r,s}^u(x).
\end{align}
If $u\ge t\ge r\vee s$ then, for every $d$-level $(r,s)$-block matrix $X=[x_{\ii\jj}]_{\ii,\jj=\bu}^\nn$,
\begin{align}\label{EuEt}
E_t^u(E_{r,s}^t(X))&=E_t^u([E_{r,s}^t(x_{\ii\jj})]_{\ii,\jj=\bu}^\nn)=[E_t^u(E_{r,s}^t(x_{\ii\jj}))]_{\ii,\jj=\bu}^\nn\\
\notag&=[E_{r,s}^u(x_{\ii\jj})]_{\ii,\jj=\bu}^\nn=E_{r,s}^u(X).
\end{align}
If $x\in\mathbb C^{r\times q}$ and $y\in\mathbb C^{q\times s}$ then, 
for every $t\ge r\vee q\vee s$, 
\begin{equation}\label{E(xy)}
E_{r,s}^t(xy)=\pi_{r,t}^Txy\pi_{s,t}=\pi_{r,t}^Tx\pi_{q,t}\pi_{q,t}^Ty\pi_{s,t}=E_{r,q}^t(x)E_{q,s}^t(y).
\end{equation}
If $X=[x_{\ii\jj}]_{\ii,\jj=\bu}^\nn$ and $Y=[y_{\ii\jj}]_{\ii,\jj=\bu}^\nn$, with $x_{\ii\jj}\in\mathbb C^{r\times q}$ and $y_{\ii\jj}\in\mathbb C^{q\times s}$ for all $\ii,\jj=\bu,\ldots,\nn$, then, for every $t\ge r\vee q\vee s$,
\begin{align}
\label{ah}E_{r,s}^t(XY)&=Q_{r,t,N(\nn)}\left[\begin{array}{cc}XY & O\\ O & O\end{array}\right]Q_{s,t,N(\nn)}^T\\
\notag&=Q_{r,t,N(\nn)}\left[\begin{array}{cc}X & O\\ O & O\end{array}\right]Q_{q,t,N(\nn)}^TQ_{q,t,N(\nn)}\left[\begin{array}{cc}Y & O\\ O & O\end{array}\right]Q_{s,t,N(\nn)}^T\\
\notag&=E_{r,q}^t(X)E_{q,s}^t(Y).
\end{align}

\subsection{Singular value distribution of extended matrix-sequences}\hfill
\begin{proposition}\label{sigmaP}
Let $\{A_\nn\}_n$ be a $d$-level $(r,s)$-block matrix-sequence and let $f:D\subset\mathbb R^k\to\mathbb C^{r\times s}$ be measurable with $0<\mu_k(D)<\infty$. For any $t\ge r\vee s$ we have
\[ \{A_\nn\}_n\sim_\sigma f\ \iff\ \{E_{r,s}^t(A_\nn)\}_n\sim_\sigma E_{r,s}^t(f). \]
\end{proposition}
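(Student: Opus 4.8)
The plan is to exploit that the extension operator $E_{r,s}^t$ acts on singular values in a completely trivial way --- it keeps the whole multiset of singular values and merely appends some zeros --- and then to match the two sides of the distributional identity term by term, the surplus zeros contributing the same quantity on each side. The starting point is the elementary observation on singular values. For $x\in\mathbb C^{r\times s}$, formula~\eqref{Erst(x)} writes $E_{r,s}^t(x)=\pi_{r,t}^Tx\pi_{s,t}$ as the matrix obtained from $x$ by appending zero columns to the right and zero rows below so as to reach size $t\times t$; hence $\sigma_i(E_{r,s}^t(x))=\sigma_i(x)$ for $i=1,\ldots,r\wedge s$ and $\sigma_i(E_{r,s}^t(x))=0$ for $i=r\wedge s+1,\ldots,t$. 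Likewise, for a $d$-level $(r,s)$-block matrix $X$ of size $N(\nn)r\times N(\nn)s$, formula~\eqref{crucial_formula} exhibits $E_{r,s}^t(X)$ as the $N(\nn)t\times N(\nn)t$ matrix obtained from $X$ by appending zero columns and zero rows and then conjugating by the permutation (hence unitary) matrices $Q_{r,t,N(\nn)}$ and $Q_{s,t,N(\nn)}$; since conjugation by unitaries preserves singular values, $\sigma_i(E_{r,s}^t(X))=\sigma_i(X)$ for $i=1,\ldots,N(\nn)(r\wedge s)$ and $\sigma_i(E_{r,s}^t(X))=0$ for $i=N(\nn)(r\wedge s)+1,\ldots,N(\nn)t$ (this also re-proves the rank and norm identities stated earlier).

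Next I would apply this twice. For $A_\nn$, where $d_n=N(\nn)r$ and $e_n=N(\nn)s$ so that $d_n\wedge e_n=N(\nn)(r\wedge s)$, while $E_{r,s}^t(A_\nn)$ is square of size $N(\nn)t$, the observation gives, for every $F\in C_c(\mathbb R)$,
\[
\frac1{N(\nn)t}\sum_{i=1}^{N(\nn)t}F\bigl(\sigma_i(E_{r,s}^t(A_\nn))\bigr)=\frac{r\wedge s}{t}\cdot\frac1{d_n\wedge e_n}\sum_{i=1}^{d_n\wedge e_n}F(\sigma_i(A_\nn))+\frac{t-(r\wedge s)}{t}\,F(0),
\]
since the $N(\nn)t-N(\nn)(r\wedge s)=N(\nn)(t-(r\wedge s))$ surplus singular values all vanish. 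Reading $E_{r,s}^t(f)$ as the measurable $\mathbb C^{t\times t}$-valued function $\xx\mapsto E_{r,s}^t(f(\xx))$ on $D$ --- to which Definition~\ref{def-distribution} applies since $0<\mu_k(D)<\infty$ --- and applying the same observation pointwise to $f(\xx)$, then integrating over $D$ and dividing by $\mu_k(D)$, we get
\[
\frac1{\mu_k(D)}\int_D\frac{\sum_{i=1}^tF\bigl(\sigma_i(E_{r,s}^t(f(\xx)))\bigr)}t\,{\rm d}\xx=\frac{r\wedge s}{t}\cdot\frac1{\mu_k(D)}\int_D\frac{\sum_{i=1}^{r\wedge s}F(\sigma_i(f(\xx)))}{r\wedge s}\,{\rm d}\xx+\frac{t-(r\wedge s)}{t}\,F(0).
\]

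To conclude, $\{E_{r,s}^t(A_\nn)\}_n\sim_\sigma E_{r,s}^t(f)$ means, by Definition~\ref{def-distribution}, precisely that the left-hand sides of the last two displays have the same limit as $n\to\infty$ for every $F\in C_c(\mathbb R)$; subtracting the common term $\frac{t-(r\wedge s)}{t}F(0)$ and dividing by the positive constant $\frac{r\wedge s}{t}$, this is equivalent to
\[
\lim_{n\to\infty}\frac1{d_n\wedge e_n}\sum_{i=1}^{d_n\wedge e_n}F(\sigma_i(A_\nn))=\frac1{\mu_k(D)}\int_D\frac{\sum_{i=1}^{r\wedge s}F(\sigma_i(f(\xx)))}{r\wedge s}\,{\rm d}\xx\quad\forall\,F\in C_c(\mathbb R),
\]
i.e.\ to $\{A_\nn\}_n\sim_\sigma f$; for the ``$\Leftarrow$'' direction here one uses that existence of the limit on the extended side forces existence of the limit on the original side, since the two are related by an additive constant and a positive multiplicative factor. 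This gives both implications. I do not expect a genuine obstacle: the only non-clerical ingredient is the zero-padding behaviour of singular values under $E_{r,s}^t$, immediate from \eqref{Erst(x)}--\eqref{crucial_formula} and unitary invariance; the rest is just keeping straight the two normalizations ($d_n\wedge e_n$ versus $N(\nn)t$, and $r\wedge s$ versus $t$) and the $F(0)$-terms that cancel on both sides.
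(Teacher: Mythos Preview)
Your proof is correct and follows essentially the same argument as the paper: both identify that $E_{r,s}^t$ appends only zero singular values (via \eqref{Erst(x)} and \eqref{crucial_formula}), derive the same pair of affine identities relating the singular-value averages before and after extension, and conclude the equivalence from the fact that the two sides differ by a common additive constant $\frac{t-(r\wedge s)}{t}F(0)$ and a positive multiplicative factor $\frac{r\wedge s}{t}$. One minor wording quibble: in \eqref{crucial_formula} the matrices $Q_{r,t,N(\nn)}$ and $Q_{s,t,N(\nn)}$ are in general different, so this is left/right multiplication by unitaries rather than conjugation, but this still preserves singular values as you use.
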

\begin{proof}
Let $\ell=r\wedge s$. For every $\xx\in D$,
\begin{alignat*}{3}
\sigma_i(E_{r,s}^t(f(\xx)))&=\sigma_i(f(\xx)), &\quad i&=1,\ldots,\ell,\\
\sigma_i(E_{r,s}^t(f(\xx)))&=0, &\quad i&=\ell+1,\ldots,t.
\end{alignat*}
Moreover, by \eqref{crucial_formula},
\begin{alignat*}{3}
\sigma_i(E_{r,s}^t(A_\nn))&=\sigma_i(A_\nn), &\quad  i&=1,\ldots, N(\nn)\ell,\\
\sigma_i(E_{r,s}^t(A_\nn))&=0, &\quad i&=N(\nn)\ell+1,\ldots,N(\nn)t.
\end{alignat*}
Thus, for every $F\in C_c(\mathbb R)$,
\begin{align*}
\frac1{N(\nn)t}\sum_{i=1}^{N(\nn)t}F(\sigma_i(E_{r,s}^t(A_\nn)))&=\frac\ell t\,\frac1{N(\nn)\ell}\sum_{i=1}^{N(\nn)\ell}F(\sigma_i(A_\nn))+\frac{t-\ell}t\,F(0),\\
\int_D\frac{\sum_{i=1}^{t}F(\sigma_i(E_{r,s}^t(f(\xx))))}t\,{\rm d}\mathbf x &=	\frac \ell t\int_D\frac{\sum_{i=1}^{\ell}F(\sigma_i(f(\xx)))}{\ell}\,{\rm d}\mathbf x+\mu_k(D)\,\frac{t-\ell}t\,F(0).
\end{align*}
Therefore, $\{A_\nn\}_n\sim_\sigma f$ if and only if $\{E_{r,s}^t(A_\nn)\}_n\sim_\sigma E_{r,s}^t(f)$.
\end{proof}

\subsection[Extended a.c.s.]{Extended a.c.s}\hfill

\begin{proposition}\label{e.a.c.s.}
Let $\{A_\nn\}_n$ and $\{B_{\nn,m}\}_n$ be $d$-level $(r,s)$-block matrix-sequences. For any $t\ge r\vee s$ we have
\[ \{B_{\nn,m}\}_n\xrightarrow{\textup{a.c.s.}}\{A_\nn\}_n\ \iff\ \{E_{r,s}^t(B_{\nn,m})\}_n\xrightarrow{\textup{a.c.s.}}\{E_{r,s}^t(A_\nn)\}_n. \]
\end{proposition}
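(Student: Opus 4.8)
The plan is to exploit the explicit ``sandwich'' formula \eqref{crucial_formula}, which expresses $E_{r,s}^t(X)$ as a permutation conjugate of the block matrix $\left[\begin{smallmatrix}X & O\\ O & O\end{smallmatrix}\right]$, together with the fact (already recorded in the excerpt) that $E_{r,s}^t$ changes neither rank nor norm. The key observation is that the a.c.s.\ decomposition is preserved under the map $E_{r,s}^t$: if $A_\nn = B_{\nn,m}+R_{\nn,m}+N_{\nn,m}$ is the splitting guaranteed by Definition~\ref{a.c.s.}, then applying the operator $E_{r,s}^t$ and using its linearity yields
\[
E_{r,s}^t(A_\nn)=E_{r,s}^t(B_{\nn,m})+E_{r,s}^t(R_{\nn,m})+E_{r,s}^t(N_{\nn,m}),
\]
and $E_{r,s}^t$ leaves the controlling quantities essentially untouched: $\textup{rank}(E_{r,s}^t(R_{\nn,m}))=\textup{rank}(R_{\nn,m})$ and $\|E_{r,s}^t(N_{\nn,m})\|=\|N_{\nn,m}\|$.

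For the forward implication, suppose $\{B_{\nn,m}\}_n\xrightarrow{\textup{a.c.s.}}\{A_\nn\}_n$. Fix $m$ and take $n_m$, $c(m)$, $\omega(m)$ as in Definition~\ref{a.c.s.}. The rank bound $\textup{rank}(R_{\nn,m})\le c(m)(d_n\wedge e_n)$ becomes, after extension, $\textup{rank}(E_{r,s}^t(R_{\nn,m}))\le c(m)(d_n\wedge e_n)$; the only point to check is that this is still a bound of the form ``$c(m)$ times the minimal dimension of the new matrices''. Here the new matrices $E_{r,s}^t(A_\nn)$ have size $N(\nn)t\times N(\nn)t$, so their minimal dimension is $N(\nn)t$, while $d_n\wedge e_n = N(\nn)(r\wedge s)=N(\nn)\ell$. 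Since $\ell\le t$, we have $c(m)N(\nn)\ell\le c(m)N(\nn)t$, so the rank bound holds with the same $c(m)$ (and the same $n_m$, $\omega(m)$). Hence $\{E_{r,s}^t(B_{\nn,m})\}_n\xrightarrow{\textup{a.c.s.}}\{E_{r,s}^t(A_\nn)\}_n$.

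For the converse, suppose $\{E_{r,s}^t(B_{\nn,m})\}_n\xrightarrow{\textup{a.c.s.}}\{E_{r,s}^t(A_\nn)\}_n$, with splitting $E_{r,s}^t(A_\nn)=E_{r,s}^t(B_{\nn,m})+\widetilde R_{\nn,m}+\widetilde N_{\nn,m}$ obeying $\textup{rank}(\widetilde R_{\nn,m})\le c(m)N(\nn)t$ and $\|\widetilde N_{\nn,m}\|\le\omega(m)$. Using \eqref{crucial_formula}, conjugate by the permutation $Q_{r,t,N(\nn)}^{-1}=Q_{r,t,N(\nn)}^T$ on the left and $Q_{s,t,N(\nn)}$ on the right, and then compress to the top-left $N(\nn)r\times N(\nn)s$ corner via $\pi_{N(\nn)r,N(\nn)t}(\,\cdot\,)\pi_{N(\nn)s,N(\nn)t}^T$; this recovers $A_\nn$, $B_{\nn,m}$ in the corner (since $E_{r,s}^t$ places the original block in that corner). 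Compression does not increase rank and does not increase the spectral norm, so defining $R_{\nn,m}$ and $N_{\nn,m}$ as the corresponding compressions of (the permutation-conjugates of) $\widetilde R_{\nn,m}$ and $\widetilde N_{\nn,m}$ gives $A_\nn=B_{\nn,m}+R_{\nn,m}+N_{\nn,m}$ with $\textup{rank}(R_{\nn,m})\le c(m)N(\nn)t$ and $\|N_{\nn,m}\|\le\omega(m)$. To convert the rank bound into one of the required form, note $N(\nn)t = (t/\ell)\,(d_n\wedge e_n)$, so $\textup{rank}(R_{\nn,m})\le (t/\ell)\,c(m)\,(d_n\wedge e_n)$; since $c'(m):=(t/\ell)c(m)\to0$ as $m\to\infty$, the relabelled triple $(n_m,c'(m),\omega(m))$ witnesses $\{B_{\nn,m}\}_n\xrightarrow{\textup{a.c.s.}}\{A_\nn\}_n$.

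The only mild subtlety—and the step I'd be most careful about—is bookkeeping the passage between the two different ``minimal dimensions'' $N(\nn)\ell$ and $N(\nn)t$ in the rank conditions, so that the constants $c(m)$ stay infinitesimal; the constant factor $t/\ell$ is harmless since $t$ and $\ell$ are fixed. Everything else (linearity of $E_{r,s}^t$, invariance of rank and norm, behavior under permutation conjugation and corner compression) is already available from \eqref{Erst(x)}, \eqref{crucial_formula}, and the algebraic properties listed just before Proposition~\ref{sigmaP}.
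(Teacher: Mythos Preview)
Your proof is correct and follows essentially the same approach as the paper: apply $E_{r,s}^t$ (via linearity and rank/norm invariance) for the forward direction, and use \eqref{crucial_formula} followed by projection onto the top-left corner for the converse. The paper handles the bookkeeping of the rank constants slightly more casually---writing $c(m)N(\nn)$ throughout and silently absorbing the fixed factors $r\wedge s$ and $t$ into $c(m)$---whereas you make the $t/\ell$ rescaling explicit; both are fine since these factors are independent of $m$ and $n$.
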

\begin{proof}
($\implies$) Suppose that $\{B_{\nn,m}\}_n\xrightarrow{\textup{a.c.s.}}\{A_\nn\}_n$. Then, for every $m$ there exists $n_m$ such that, for $n\ge n_m$,
\[ A_\nn=B_{\nn,m}+R_{\nn,m}+N_{\nn,m},\quad\textup{rank}(R_{\nn,m})\le c(m)N(\nn),\quad\|N_{\nn,m}\|\le\omega(m), \]
where $c(m),\omega(m)\to0$ as $m\to\infty$. By applying the extension operator to both sides of the previous equation, we obtain
\[ E_{r,s}^t(A_\nn)=E_{r,s}^t(B_{\nn,m})+E_{r,s}^t(R_{\nn,m})+E_{r,s}^t(N_{\nn,m}) \]
with
\begin{align*}
\textup{rank}(E_{r,s}^t(R_{\nn,m}))&=\textup{rank}(R_{\nn,m})\le c(m)N(\nn),\\
\|E_{r,s}^t(N_{\nn,m})\|&=\|N_{\nn,m}\|\le\omega(m).
\end{align*}
This shows that $\{E_{r,s}^t(B_{\nn,m})\}_n\xrightarrow{\textup{a.c.s.}}\{E_{r,s}^t(A_\nn)\}_n$.

($\impliedby$) Suppose that $\{E_{r,s}^t(B_{\nn,m})\}_n\xrightarrow{\textup{a.c.s.}}\{E_{r,s}^t(A_\nn)\}_n$. Then, for every $m$ there exists $n_m$ such that, for $n\ge n_m$,
\[ E_{r,s}^t(A_\nn)=E_{r,s}^t(B_{\nn,m})+R_{\nn,m}+N_{\nn,m},\ \ \,\textup{rank}(R_{\nn,m})\le c(m)N(\nn),\ \ \,\|N_{\nn,m}\|\le\omega(m), \]
where $c(m),\omega(m)\to0$ as $m\to\infty$. By \eqref{crucial_formula}, the previous equation is equivalent to
\[ \left[\begin{array}{cc}A_\nn & O\\ O & O\end{array}\right]=\left[\begin{array}{cc}B_{\nn,m} & O\\ O & O\end{array}\right]+Q_{r,t,N(\nn)}^TR_{\nn,m}Q_{s,t,N(\nn)}+Q_{r,t,N(\nn)}^TN_{\nn,m}Q_{s,t,N(\nn)}. \]
This implies that
\begin{align*}
A_\nn&=B_{\nn,m}+\Pi_{r,t,N(\nn)}Q_{r,t,N(\nn)}^TR_{\nn,m}Q_{s,t,N(\nn)}\Pi_{s,t,N(\nn)}^T\\
&\hphantom{{}={}}+\Pi_{r,t,N(\nn)}Q_{r,t,N(\nn)}^TN_{\nn,m}Q_{s,t,N(\nn)}\Pi_{s,t,N(\nn)}^T,
\end{align*}
where $\Pi_{a,t,N(\nn)}$ is the $N(\nn)a\times N(\nn)t$ matrix given by $\Pi_{a,t,N(\nn)}=[\,I_{N(\nn)a}\,|\,O\,]$ for every $a\in\mathbb N$ with $a\le t$. Since $\|\Pi_{a,t,N(\nn)}\|=\|Q_{a,t,N(\nn)}\|=1$, we have
\begin{align*}
\textup{rank}(\Pi_{r,t,N(\nn)}Q_{r,t,N(\nn)}^TR_{\nn,m}Q_{s,t,N(\nn)}\Pi_{s,t,N(\nn)}^T)&\le\textup{rank}(R_{\nn,m})\le c(m)N(\nn),\\
\|\Pi_{r,t,N(\nn)}Q_{r,t,N(\nn)}^TN_{\nn,m}Q_{s,t,N(\nn)}\Pi_{s,t,N(\nn)}^T\|&\le\|N_{\nn,m}\|\le\omega(m),
\end{align*}
and we conclude that $\{B_{\nn,m}\}_n\xrightarrow{\textup{a.c.s.}}\{A_\nn\}_n$.
\end{proof}

\subsection{Extended GLT sequences}
Let $a:[0,1]^d\to\mathbb C^{r\times s}$ be continuous a.e.\ on $[0,1]^d$, let $f:[-\pi,\pi]^d\to\mathbb C^{r\times s}$ be in $L^1([-\pi,\pi]^d)$, and take $\nn\in\mathbb N^d$ and $t\ge r\vee s$. Then, by definition of $E_{r,s}^t$,
\begin{align}
\label{Dna-estesa}E_{r,s}^t(D_\nn(a))&=\mathop{\textup{diag}}_{\ii=\bu,\ldots,\nn}E_{r,s}^t\Bigl(a\Bigl(\frac\ii\nn\Bigr)\Bigr)=D_\nn(E_{r,s}^t(a)),\\
\label{Tnf-estesa}E_{r,s}^t(T_\nn(f))&=[E_{r,s}^t(f_{\ii-\jj})]_{\ii,\jj=\bu}^\nn=[(E_{r,s}^t(f))_{\ii-\jj}]_{\ii,\jj=\bu}^\nn=T_\nn(E_{r,s}^t(f)).
\end{align}
In the case where $r=s$, it follows from \eqref{Dna-estesa}--\eqref{Tnf-estesa} and {\bf GLT\,3} that, for every sequence of $d$-indices $\{\nn=\nn(n)\}_n$ such that $\nn\to\infty$ as $n\to\infty$,
\begin{align} 
\label{Dna-estesa-GLT}\{E_s^t(D_\nn(a))\}_n\sim_{\textup{GLT}}E_s^t(a(\xx)),\\
\label{Tnf-estesa-GLT}\{E_s^t(T_\nn(f))\}_n\sim_{\textup{GLT}}E_s^t(f(\btheta)).
\end{align}
Proposition~\ref{GLT-estesa-GLT} generalizes \eqref{Dna-estesa-GLT}--\eqref{Tnf-estesa-GLT} by showing that an extended GLT sequence is still a GLT sequence with symbol given by the extended symbol. For the proof of Proposition~\ref{GLT-estesa-GLT}, we need the following lemma \cite[Lemma~2.4]{GLT-bookII}.

\begin{lemma}\label{l2.4}
Let $\kappa:[0,1]^d\times[-\pi,\pi]^d\to\mathbb C$ be measurable. Then, there exists a sequence of functions $\kappa_m:[0,1]^d\times[-\pi,\pi]^d\to\mathbb C$ such that $\kappa_m\to\kappa$ a.e.\ and $\kappa_m$ is of the form
\begin{equation*}
\kappa_m(\xx,\btheta)=\sum_{\jj=-\boldsymbol N_m}^{\boldsymbol N_m}a_\jj^{(m)}(\xx)\,{\rm e}^{{\rm i}\jj\cdot\btheta},\quad a_\jj^{(m)}\in C^\infty([0,1]^d),\quad \boldsymbol N_m\in\mathbb N^d.
\end{equation*}
\end{lemma}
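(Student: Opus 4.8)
The plan is to obtain $\kappa_m$ by a three-step approximation: truncate $\kappa$ so that it becomes bounded, invoke the density in $L^2$ of the functions with the prescribed separated structure, and then convert $L^2$-closeness back into a.e.\ convergence by passing to a subsequence. Write $Q=[0,1]^d\times[-\pi,\pi]^d$, which has finite Lebesgue measure; by treating the real and imaginary parts of $\kappa$ separately it suffices to handle real-valued $\kappa$, the coefficients $a_\jj^{(m)}$ being permitted to be complex. For the first step I would set $\kappa^{(M)}=\kappa\,\mathbf 1_{\{|\kappa|\le M\}}$ for $M\in\mathbb N$; since $\kappa$ is everywhere finite, $\kappa^{(M)}\to\kappa$ pointwise as $M\to\infty$ (the truncation being inactive at a given point once $M>|\kappa|$ there), hence $\kappa^{(M)}\to\kappa$ in measure on the finite-measure set $Q$, and each $\kappa^{(M)}$, being bounded and measurable, lies in $L^2(Q)$.

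The technical core is the claim that the linear span $\mathcal S$ of the functions $a(\xx)\,{\rm e}^{{\rm i}\jj\cdot\btheta}$ with $a\in C^\infty([0,1]^d)$ and $\jj\in\mathbb Z^d$ is dense in $L^2(Q)$. I would prove this via a partial Fourier expansion in the variable $\btheta$: for $g\in L^2(Q)$ the coefficients $c_\jj(\xx)=(2\pi)^{-d}\int_{[-\pi,\pi]^d}g(\xx,\btheta)\,{\rm e}^{-{\rm i}\jj\cdot\btheta}{\rm d}\btheta$ are measurable in $\xx$ (Fubini, as $g\in L^1(Q)$) and satisfy $\sum_{\jj\in\mathbb Z^d}\|c_\jj\|_{L^2([0,1]^d)}^2<\infty$ by Parseval applied to each section together with Tonelli, so the truncated sums $\sum_{\|\jj\|_\infty\le N}c_\jj(\xx)\,{\rm e}^{{\rm i}\jj\cdot\btheta}$ converge to $g$ in $L^2(Q)$ as $N\to\infty$ (dominated convergence). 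Replacing the finitely many $c_\jj$ appearing in such a truncated sum by $C^\infty([0,1]^d)$ functions close to them in $L^2([0,1]^d)$ (e.g.\ by mollification after extension, or by polynomials via Stone--Weierstrass and dominated convergence) produces an element of $\mathcal S$ arbitrarily close to $g$ in $L^2(Q)$, proving the claim; and any such finite sum over $\jj$ can be rewritten with $\jj$ ranging over a symmetric box $\{-\boldsymbol N_m,\ldots,\boldsymbol N_m\}$ simply by inserting zero coefficients, which is precisely the form demanded in the statement.

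Finally I would combine the two ingredients by a diagonal argument: for each $M$, pick $g_M\in\mathcal S$ (written in the symmetric-box form) with $\|g_M-\kappa^{(M)}\|_{L^2(Q)}\le 1/M$; then $g_M-\kappa^{(M)}\to0$ in $L^2(Q)$ and hence in measure, while $\kappa^{(M)}\to\kappa$ in measure, so $g_M\to\kappa$ in measure on $Q$; extracting a subsequence $g_{M_m}$ that converges to $\kappa$ a.e.\ (Riesz) and setting $\kappa_m=g_{M_m}$ completes the proof. I expect the only genuinely delicate point to be the interface between the two ingredients: since $\kappa$ is merely measurable there is no global $L^p$ control and no way to approximate $\kappa$ itself in any $L^p$, so the truncation step and the passage through convergence in measure followed by a subsequence extraction is exactly what is needed to bridge the gap. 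It is also worth noting that one cannot shortcut the density step by a bare application of Stone--Weierstrass on the closed box $Q$, both because the points $\theta_i=\pm\pi$ are identified by the exponentials and because the statement insists on the specific separated form (a $C^\infty$ function of $\xx$ times a pure exponential in $\btheta$); the partial-Fourier-series argument is what delivers exactly this structure.
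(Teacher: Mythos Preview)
The paper does not actually prove this lemma; it simply quotes it as \cite[Lemma~2.4]{GLT-bookII} and uses it as a black box. There is therefore nothing in the paper to compare your argument against.

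That said, your proof is correct and proceeds along the natural lines: truncate to reduce to $L^2(Q)$, establish density of the separated functions in $L^2(Q)$ via a partial Fourier expansion in $\btheta$ followed by $C^\infty$ approximation of the finitely many $\xx$-coefficients, and then recover a.e.\ convergence by extracting a subsequence from convergence in measure. Each step is sound; in particular, your observation that the truncation is genuinely needed (since $\kappa$ is only measurable and need not lie in any $L^p$) identifies the one non-routine point. The reduction to real-valued $\kappa$ is harmless but unnecessary, as the entire argument goes through verbatim for complex $\kappa$.
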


\begin{proposition}\label{GLT-estesa-GLT}
Let $\{A_\nn\}_n$ be a $d$-level $s$-block matrix-sequence and let $\kappa:[0,1]^d\times[-\pi,\pi]^d\to\mathbb C^{s\times s}$ be measurable. For any $t\ge s$ we have
\[ \{A_\nn\}_n\sim_{\textup{GLT}}\kappa\ \iff\ \{E_s^t(A_\nn)\}_n\sim_{\textup{GLT}}E_s^t(\kappa). \]
\end{proposition}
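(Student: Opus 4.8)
The plan is to reduce the statement to the a.c.s.-based characterizations \textbf{GLT\,7}--\textbf{GLT\,9}, using the facts already established above: $E_s^t$ is linear on $d$-level $(s,s)$-block matrices with fixed level orders and sends products to products (formula~\eqref{ah}); it commutes with the sampling and Toeplitz constructions, $E_s^t(D_\nn(a))=D_\nn(E_s^t(a))$ and $E_s^t(T_\nn(f))=T_\nn(E_s^t(f))$ (see \eqref{Dna-estesa}--\eqref{Tnf-estesa}); and it preserves a.c.s.\ (Proposition~\ref{e.a.c.s.}). I would also use that on symbols $E_s^t$ merely pads with zero entries, so it turns a.e.\ convergence into a.e.\ (hence in measure) convergence, and that it maps a trigonometric monomial ${\rm e}^{{\rm i}\jj\cdot\btheta}E_{\alpha\beta}^{(s)}$ to the trigonometric monomial ${\rm e}^{{\rm i}\jj\cdot\btheta}E_{\alpha\beta}^{(t)}$. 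The two implications are proved in order, the ``$\Leftarrow$'' part invoking the ``$\Rightarrow$'' part.

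\textbf{Direction ``$\Rightarrow$''.} Assuming $\{A_\nn\}_n\sim_{\textup{GLT}}\kappa$, I would apply \textbf{GLT\,9} to obtain explicit approximants $\{B_{\nn,m}\}_n=\bigl\{\sum_i D_\nn(a_{i,m}I_s)T_\nn(f_{i,m})\bigr\}_n\xrightarrow{\textup{a.c.s.}}\{A_\nn\}_n$ with $\kappa_m=\sum_i a_{i,m}f_{i,m}\to\kappa$ a.e. Applying $E_s^t$ and using linearity together with \eqref{ah} and \eqref{Dna-estesa}--\eqref{Tnf-estesa}, one rewrites $E_s^t(B_{\nn,m})=\sum_i D_\nn\bigl(a_{i,m}E_s^t(I_s)\bigr)T_\nn\bigl(E_s^t(f_{i,m})\bigr)$, which by \textbf{GLT\,3}--\textbf{GLT\,4} is a $d$-level $t$-block GLT sequence whose symbol, after the elementary simplification $E_s^t(I_s)E_{\alpha\beta}^{(t)}=E_{\alpha\beta}^{(t)}$ (valid since $\alpha,\beta\le s$), collapses to $\sum_i a_{i,m}E_s^t(f_{i,m})=E_s^t(\kappa_m)$ by linearity of $E_s^t$. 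Since $E_s^t(\kappa_m)\to E_s^t(\kappa)$ a.e.\ and, by Proposition~\ref{e.a.c.s.}, $\{E_s^t(B_{\nn,m})\}_n\xrightarrow{\textup{a.c.s.}}\{E_s^t(A_\nn)\}_n$, property \textbf{GLT\,7} gives $\{E_s^t(A_\nn)\}_n\sim_{\textup{GLT}}E_s^t(\kappa)$.

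\textbf{Direction ``$\Leftarrow$''.} Now $\{A_\nn\}_n$ is not yet known to be a GLT sequence, so in place of \textbf{GLT\,9} I would construct approximants of $\kappa$ directly: applying Lemma~\ref{l2.4} to each scalar component of the $s\times s$ function $\kappa$ and regrouping yields $\kappa_m(\xx,\btheta)=\sum_i a_{i,m}(\xx)f_{i,m}(\btheta)\to\kappa$ a.e., with $a_{i,m}\in C^\infty([0,1]^d)$ and $f_{i,m}$ trigonometric monomials. Setting $B_{\nn,m}=\sum_i D_\nn(a_{i,m}I_s)T_\nn(f_{i,m})$ gives $\{B_{\nn,m}\}_n\sim_{\textup{GLT}}\kappa_m$ by \textbf{GLT\,3}--\textbf{GLT\,4}, hence $\{E_s^t(B_{\nn,m})\}_n\sim_{\textup{GLT}}E_s^t(\kappa_m)$ by the implication just proved. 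Since $E_s^t(\kappa_m)\to E_s^t(\kappa)$ a.e.\ and $\{E_s^t(A_\nn)\}_n\sim_{\textup{GLT}}E_s^t(\kappa)$ by hypothesis, \textbf{GLT\,8} gives $\{E_s^t(B_{\nn,m})\}_n\xrightarrow{\textup{a.c.s.}}\{E_s^t(A_\nn)\}_n$, and Proposition~\ref{e.a.c.s.} transfers this to $\{B_{\nn,m}\}_n\xrightarrow{\textup{a.c.s.}}\{A_\nn\}_n$. Finally \textbf{GLT\,7}, applied with the approximating GLT sequences $\{B_{\nn,m}\}_n\sim_{\textup{GLT}}\kappa_m$ and $\kappa_m\to\kappa$ in measure, yields $\{A_\nn\}_n\sim_{\textup{GLT}}\kappa$.

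The step I expect to be the main obstacle is the bookkeeping in ``$\Rightarrow$'': verifying that pushing the $D_\nn$--$T_\nn$ approximants through $E_s^t$ produces a $t$-block GLT sequence with symbol \emph{exactly} $E_s^t(\kappa_m)$, and not merely a zero-padded cousin of it. This hinges on the precise forms of \eqref{ah} and \eqref{Dna-estesa}--\eqref{Tnf-estesa} and on the identity $E_s^t(I_s)E_{\alpha\beta}^{(t)}=E_{\alpha\beta}^{(t)}$ for $\alpha,\beta\le s$. Once this is nailed down, both directions are a routine assembly of the listed GLT axioms, Proposition~\ref{e.a.c.s.}, and Lemma~\ref{l2.4}, the only structural requirement being that ``$\Leftarrow$'' is proved after ``$\Rightarrow$''.
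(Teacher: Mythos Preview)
Your proposal is correct and follows essentially the same approach as the paper's own proof: both directions are handled via \textbf{GLT\,7}--\textbf{GLT\,9}, Lemma~\ref{l2.4}, Proposition~\ref{e.a.c.s.}, and the compatibility of $E_s^t$ with $D_\nn$, $T_\nn$, products, and a.e.\ convergence. The only cosmetic difference is that the paper writes the symbol of $\{E_s^t(B_{\nn,m})\}_n$ directly as $\sum_i E_s^t(a_{i,m}I_s)E_s^t(f_{i,m})=E_s^t(\kappa_m)$ using \eqref{E(xy)}, whereas you pass through the identity $E_s^t(I_s)E_{\alpha\beta}^{(t)}=E_{\alpha\beta}^{(t)}$; these are the same computation.
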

\begin{proof}
($\implies$) Suppose $\{A_\nn\}_n\sim_{\textup{GLT}}\kappa$. By {\bf GLT\,9}, there exist functions $a_{i,m}$, $f_{i,m}$, $i=1,\ldots,N_m$, such that $a_{i,m}:[0,1]^d\to\mathbb C$ belongs to $C^\infty([0,1]^d)$, $f_{i,m}$ is a trigonometric monomial in $\{{\rm e}^{{\rm i}\jj\cdot\btheta}E_{\alpha\beta}^{(s)}:\jj\in\mathbb Z^d,\ 1\le\alpha,\beta\le s\}$, and
\begin{itemize}[leftmargin=*]
	\item $\kappa_m(\xx,\btheta)=\sum_{i=1}^{N_m}a_{i,m}(\xx)f_{i,m}(\btheta)\to\kappa(\xx,\btheta)$ a.e., \vspace{1pt}
	\item $\{B_{\nn,m}\}_n=\bigl\{\sum_{i=1}^{N_m}D_\nn(a_{i,m}I_{s})T_\nn(f_{i,m})\bigr\}_n\xrightarrow{\textup{a.c.s.}}\{A_\nn\}_n$.
\end{itemize}
By the linearity of $E_s^t$, properties \eqref{E(xy)}--\eqref{ah}, equations \eqref{Dna-estesa-GLT}--\eqref{Tnf-estesa-GLT}, and {\bf GLT\,4},
\begin{align}\label{GLT7a}
\{E_s^t(B_{\nn,m})\}_n&=\Biggl\{\sum_{i=1}^{N_m}E_s^t(D_\nn(a_{i,m}I_s))E_s^t(T_\nn(f_{i,m}))\Biggr\}_n\\
\notag&\sim_{\textup{GLT}}\sum_{i=1}^{N_m}E_s^t(a_{i,m}(\xx)I_s)E_s^t(f_{i,m}(\btheta))=E_s^t(\kappa_m(\xx,\btheta)).
\end{align}
By Proposition \ref{e.a.c.s.},
\begin{equation}\label{GLT7b}
\{E_s^t(B_{\nn,m})\}_n\xrightarrow{\textup{a.c.s.}}\{E_s^t(A_\nn)\}_n.
\end{equation}
Finally, it is clear that
\begin{equation}\label{GLT7c}
E_s^t(\kappa_m(\xx,\btheta))\to E_s^t(\kappa(\xx,\btheta))\mbox{ \,a.e.}
\end{equation}
Equations \eqref{GLT7a}--\eqref{GLT7c} and {\bf GLT\,7} yield the thesis $\{E_s^t(A_\nn)\}_n\sim_{\textup{GLT}}E_s^t(\kappa)$.

($\impliedby$) Suppose $\{E_s^t(A_\nn)\}_n\sim_{\textup{GLT}}E_s^t(\kappa)$. Let $a_{i,m}$, $f_{i,m}$, $i=1,\ldots,N_m$, be functions such that $a_{i,m}:[0,1]^d\to\mathbb C$ is continuous a.e., $f_{i,m}:[-\pi,\pi]^d\to\mathbb C^{s\times s}$ is in $L^1([-\pi,\pi]^d)$, and
\begin{equation}\label{GLT7.1}
\kappa_m(\xx,\btheta)=\sum_{i=1}^{N_m}a_{i,m}(\xx)f_{i,m}(\btheta)\to\kappa(\xx,\btheta)\mbox{ \,a.e.}
\end{equation}
These functions exist by Lemma~\ref{l2.4}, which in fact ensures we can take $a_{i,m}\in C^\infty([0,1]^d)$ and $f_{i,m}$ in the set of trigonometric monomials $\{{\rm e}^{{\rm i}\jj\cdot\btheta}E_{\alpha\beta}^{(s)}:\jj\in\mathbb Z^d,\ 1\le\alpha,\beta\le s\}$.
Let
\[ B_{\nn,m}=\sum_{i=1}^{N_m}D_\nn(a_{i,m}I_{s})T_\nn(f_{i,m}) \]
and note that, by {\bf GLT\,3}\,--\,{\bf GLT\,4},
\begin{equation}\label{GLT7.2}
\{B_{\nn,m}\}_n\sim_{\textup{GLT}}\kappa_m(\xx,\btheta).
\end{equation}
We have $\{E_s^t(B_{\nn,m})\}_n\sim_{\textup{GLT}}E_s^t(\kappa_m(\xx,\btheta))$ (see \eqref{GLT7a}) and $E_s^t(\kappa_m(\xx,\btheta))\to E_s^t(\kappa(\xx,\btheta))$ a.e.\ (by \eqref{GLT7.1}). Keeping in mind the assumption $\{E_s^t(A_\nn)\}_n\sim_{\textup{GLT}}E_s^t(\kappa(\xx,\btheta))$ and using {\bf GLT\,8}, we obtain
\begin{equation*}
\{E_s^t(B_{\nn,m})\}_n\xrightarrow{\textup{a.c.s.}}\{E_s^t(A_\nn)\}_n.
\end{equation*}
By Proposition~\ref{e.a.c.s.}, this implies that
\begin{equation}\label{GLT7.3}
\{B_{\nn,m}\}_n\xrightarrow{\textup{a.c.s.}}\{A_\nn\}_n.
\end{equation}
Equations \eqref{GLT7.1}--\eqref{GLT7.3} and {\bf GLT\,7} yield the thesis $\{A_\nn\}_n\sim_{\textup{GLT}}\kappa$.
\end{proof}

\section{Rectangular GLT sequences}\label{sec:GLT}
We develop in this section the theory of rectangular (multilevel block) GLT sequences as an extension of the theory of square (multilevel block) GLT sequences.
The key tool for transferring results about square GLT sequences to rectangular GLT sequences is the extension operator studied in Section~\ref{eo}.

\subsection{Definition of rectangular GLT sequences}\hfill

\begin{definition}[rectangular GLT sequence]\label{rGLT}
Let $\{A_\nn\}_n$ be a $d$-level $(r,s)$-block matrix-sequence and let $\kappa:[0,1]^d\times[-\pi,\pi]^d\to\mathbb C^{r\times s}$ be measurable. We say that $\{A_\nn\}_n$ is a ($d$-level $(r,s)$-block) GLT sequence with symbol $\kappa$, and we write $\{A_\nn\}_n\sim_{\textup{GLT}}\kappa$, if one of the following equivalent conditions is satisfied.
\begin{enumerate}[leftmargin=*]
	\item $\{E_{r,s}^t(A_\nn)\}_n\sim_{\textup{GLT}}E_{r,s}^t(\kappa)$ for all $t\ge r\vee s$.
	\item There exists $t\ge r\vee s$ such that $\{E_{r,s}^t(A_\nn)\}_n\sim_{\textup{GLT}}E_{r,s}^t(\kappa)$.
\end{enumerate}
\end{definition}
\begin{proof}
We prove the equivalence between the two conditions in Definition~\ref{rGLT}.

($1\implies2$) Obvious.

($2\implies1$) Suppose there exists $t\ge r\vee s$ such that $\{E_{r,s}^t(A_\nn)\}_n\sim_{\textup{GLT}}E_{r,s}^t(\kappa)$. We show that $\{E_{r,s}^u(A_\nn)\}_n\sim_{\textup{GLT}}E_{r,s}^u(\kappa)$ for all $u\ge r\vee s$.
If $u\ge t$ then, by \eqref{EuEtx}--\eqref{EuEt} and Proposition~\ref{GLT-estesa-GLT},
\[ \{E_{r,s}^u(A_\nn)\}_n=\{E_t^u(E_{r,s}^t(A_\nn))\}_n\sim_{\textup{GLT}}E_t^u(E_{r,s}^t(\kappa))=E_{r,s}^u(\kappa). \]
If $r\vee s\le u\le t$ then, by \eqref{EuEtx}--\eqref{EuEt},
\[ \{E_u^t(E_{r,s}^u(A_\nn))\}_n=\{E_{r,s}^t(A_\nn)\}_n\sim_{\textup{GLT}}E_{r,s}^t(\kappa)=E_u^t(E_{r,s}^u(\kappa)), \]
which implies that $\{E_{r,s}^u(A_\nn)\}_n\sim_{\textup{GLT}}E_{r,s}^u(\kappa)$ by Proposition~\ref{GLT-estesa-GLT}.
\end{proof}

\begin{remark}
Definition~\ref{rGLT} is consistent with the definition of multilevel block GLT sequences given in \cite{bgd}. Indeed, let $\{A_\nn\}_n$ be a $d$-level $s$-block matrix-sequence and let $\kappa:[0,1]^d\times[-\pi,\pi]^d\to\mathbb C^{s\times s}$ be measurable. Then, $\{A_\nn\}_n\sim_{\textup{GLT}}\kappa$ according to the definition in \cite{bgd} if and only if $\{A_\nn\}_n\sim_{\textup{GLT}}\kappa$ according to Definition~\ref{rGLT}; see Proposition~\ref{GLT-estesa-GLT}.
\end{remark}

According to Definition~\ref{rGLT}, the extension operator ``embeds'' the world of rectangular GLT sequences in the world of square GLT sequences. 
As we shall see in the next sections, this embedding allows us to transfer most of the properties {\bf GLT\,0}\,--\,{\bf GLT\,9} to rectangular GLT sequences. Note, however, that we cannot transfer to rectangular GLT sequences the properties that involve spectral symbols or Hermitian matrices.

\subsection{Uniqueness of the symbol of a rectangular GLT sequence}
The next theorem proves the analog of the first part of {\bf GLT\,0} for rectangular GLT sequences.

\begin{theorem}\label{GLT0-parte1}
Let $\{A_\nn\}_n$ 
be a $d$-level $(r,s)$-block GLT sequence with symbol $\kappa$ and let $\xi:[0,1]^d\times[-\pi,\pi]^d\to\mathbb C^{r\times s}$ be measurable. Then,
\[ \{A_\nn\}_n\sim_{\textup{GLT}}\xi\ \iff\ \kappa=\xi\mbox{ \,a.e.\ in }\,[0,1]^d\times[-\pi,\pi]^d. \]
\end{theorem}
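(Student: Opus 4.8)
The plan is to reduce the statement to the corresponding uniqueness property \textbf{GLT\,0} for square GLT sequences by passing through the extension operator, which is exactly the philosophy announced in Section~\ref{sec:GLT}. First I would fix any $t\ge r\vee s$ and use Definition~\ref{rGLT}: since $\{A_\nn\}_n\sim_{\textup{GLT}}\kappa$ (as a rectangular GLT sequence), we have $\{E_{r,s}^t(A_\nn)\}_n\sim_{\textup{GLT}}E_{r,s}^t(\kappa)$ as a $d$-level $t$-block (square) GLT sequence.

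($\Longleftarrow$) If $\kappa=\xi$ a.e.\ on $[0,1]^d\times[-\pi,\pi]^d$, then clearly $E_{r,s}^t(\kappa)=E_{r,s}^t(\xi)$ a.e., because $E_{r,s}^t$ acts entrywise by padding with zeros. Hence, by the first part of \textbf{GLT\,0} applied to the square GLT sequence $\{E_{r,s}^t(A_\nn)\}_n$, we get $\{E_{r,s}^t(A_\nn)\}_n\sim_{\textup{GLT}}E_{r,s}^t(\xi)$, which by Definition~\ref{rGLT} (condition 2) means precisely $\{A_\nn\}_n\sim_{\textup{GLT}}\xi$.

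($\Longrightarrow$) Conversely, suppose $\{A_\nn\}_n\sim_{\textup{GLT}}\xi$. Then $\{E_{r,s}^t(A_\nn)\}_n\sim_{\textup{GLT}}E_{r,s}^t(\xi)$ as well. Now $\{E_{r,s}^t(A_\nn)\}_n$ is a single square GLT sequence with two symbols $E_{r,s}^t(\kappa)$ and $E_{r,s}^t(\xi)$, so the first part of \textbf{GLT\,0} forces $E_{r,s}^t(\kappa)=E_{r,s}^t(\xi)$ a.e.\ on $[0,1]^d\times[-\pi,\pi]^d$. It remains to deduce $\kappa=\xi$ a.e.\ from this; but from \eqref{Erst(x)}, for each point $(\xx,\btheta)$ the matrix $\kappa(\xx,\btheta)$ occupies the top-left $r\times s$ block of $E_{r,s}^t(\kappa(\xx,\btheta))$, and likewise for $\xi$, so the equality of the extended matrices a.e.\ is in fact equivalent to $\kappa=\xi$ a.e.

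The argument is essentially routine once the extension-operator machinery of Section~\ref{eo} is in place; the only point requiring a word of care is the equivalence ``$E_{r,s}^t(\kappa)=E_{r,s}^t(\xi)$ a.e.\ $\iff$ $\kappa=\xi$ a.e.'', which follows immediately from the explicit block form \eqref{Erst(x)} since the top-left $r\times s$ corner of the zero-padded matrix recovers the original. So I do not expect any genuine obstacle here; the main thing is simply to invoke Definition~\ref{rGLT} and \textbf{GLT\,0} in the right order.
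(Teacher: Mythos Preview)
Your proof is correct and follows exactly the same route as the paper: fix $t\ge r\vee s$, apply Definition~\ref{rGLT} to pass to the extended square GLT sequence $\{E_{r,s}^t(A_\nn)\}_n$, invoke the square \textbf{GLT\,0}, and use the injectivity of $E_{r,s}^t$ (recovery of the top-left $r\times s$ block) to translate back. The paper's argument is identical in structure and content, just slightly more terse on the last point.
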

\begin{proof}
($\implies$) Let $t\ge r\vee s$. Since $\{A_\nn\}_n\sim_{\textup{GLT}}\kappa$ and $\{A_\nn\}_n\sim_{\textup{GLT}}\xi$, we have $\{E_{r,s}^t(A_\nn)\}_n\sim_{\textup{GLT}}E_{r,s}^t(\kappa)$ and $\{E_{r,s}^t(A_\nn)\}_n\sim_{\textup{GLT}}E_{r,s}^t(\xi)$ by Definition~\ref{rGLT}. This implies that $E_{r,s}^t(\kappa)=E_{r,s}^t(\xi)$ a.e.\ by {\bf GLT\,0}, and so $\kappa=\xi$ a.e.

($\impliedby$) Let $t\ge r\vee s$. Since $\{A_\nn\}_n\sim_{\textup{GLT}}\kappa$ and $\kappa=\xi$ a.e., we have $\{E_{r,s}^t(A_\nn)\}_n\sim_{\textup{GLT}}E_{r,s}^t(\kappa)$ by Definition~\ref{rGLT} and $E_{r,s}^t(\kappa)=E_{r,s}^t(\xi)$ a.e. This implies that $\{E_{r,s}^t(A_\nn)\}_n\sim_{\textup{GLT}}E_{r,s}^t(\xi)$ by {\bf GLT\,0}, and so $\{A_\nn\}_n\sim_{\textup{GLT}}\xi$ by Definition~\ref{rGLT}.
\end{proof}

\subsection{Fundamental examples of rectangular GLT sequences}\label{GLT3}
In this section, we prove the analog of {\bf GLT\,3} for rectangular GLT sequences.

\subsubsection{Rectangular Toeplitz sequences}
Let $f:[-\pi,\pi]^d\to\mathbb C^{r\times s}$ be in $L^1([-\pi,\pi]^d)$ and let $\{T_\nn(f)\}_{\nn\in\mathbb N^d}$ be the family of Toeplitz matrices generated by $f$ (see Definition~\ref{mbTm}). By \eqref{Tnf-estesa}, Definition~\ref{rGLT}, and {\bf GLT\,3}, for every sequence of $d$-indices $\{\nn=\nn(n)\}_n$ such that $\nn\to\infty$ as $n\to\infty$, we have $\{T_\nn(f)\}_n\sim_{\textup{GLT}}f(\btheta)$.

\subsubsection{Sequences of rectangular diagonal sampling matrices}
Let $a:[0,1]^d\to\mathbb C^{r\times s}$ be continuous a.e.\ on $[0,1]^d$ and let $\{D_\nn(a)\}_{\nn\in\mathbb N^d}$ be the family of diagonal sampling matrices generated by $a$ (see Definition~\ref{mbdsm}). By \eqref{Dna-estesa}, Definition~\ref{rGLT}, and {\bf GLT\,3}, for every sequence of $d$-indices $\{\nn=\nn(n)\}_n$ such that $\nn\to\infty$ as $n\to\infty$, we have $\{D_\nn(a)\}_n\sim_{\textup{GLT}}a(\xx)$.

\subsubsection{Rectangular zero-distributed sequences}
Suppose that $\{Z_\nn\}_n$ is a $d$-level $(r,s)$-block zero-distributed sequence. Then, $\{E_{r,s}^t(Z_\nn)\}_n$ is zero-distributed for any $t\ge r\vee s$; see Proposition~\ref{sigmaP}. Hence, by Definition~\ref{rGLT}, $\{Z_\nn\}_n\sim_{\textup{GLT}}O_{r,s}$.

\subsection{Singular value distribution of rectangular GLT sequences}
The next theorem proves the analog of {\bf GLT\,1} for rectangular GLT sequences.


\begin{theorem}\label{GLT1}
If $\{A_\nn\}_n\sim_{\textup{GLT}}\kappa$ then $\{A_\nn\}_n\sim_\sigma\kappa$.
\end{theorem}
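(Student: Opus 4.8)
The plan is to reduce everything to the square case via the extension operator, exactly as the machinery of Section~\ref{eo} was set up to allow. Fix any $t\ge r\vee s$ (for instance $t=r\vee s$). By the definition of rectangular GLT sequence (Definition~\ref{rGLT}, condition~1), the hypothesis $\{A_\nn\}_n\sim_{\textup{GLT}}\kappa$ gives that $\{E_{r,s}^t(A_\nn)\}_n\sim_{\textup{GLT}}E_{r,s}^t(\kappa)$ as a (square) $d$-level $t$-block GLT sequence.

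Next I would invoke the square GLT property \textbf{GLT\,1}: a square GLT sequence is always distributed in the singular value sense by its symbol. Applied to $\{E_{r,s}^t(A_\nn)\}_n\sim_{\textup{GLT}}E_{r,s}^t(\kappa)$ this yields $\{E_{r,s}^t(A_\nn)\}_n\sim_\sigma E_{r,s}^t(\kappa)$.

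Finally I would transfer this back down to the rectangular matrices using Proposition~\ref{sigmaP}, which states precisely that $\{A_\nn\}_n\sim_\sigma\kappa$ if and only if $\{E_{r,s}^t(A_\nn)\}_n\sim_\sigma E_{r,s}^t(\kappa)$. Hence $\{A_\nn\}_n\sim_\sigma\kappa$, which is the thesis. In short, the proof is the composition of three already-established facts: Definition~\ref{rGLT} $\Rightarrow$ \textbf{GLT\,1} (square) $\Rightarrow$ Proposition~\ref{sigmaP}.

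There is essentially no genuine obstacle here: all the work has been front-loaded into Proposition~\ref{sigmaP} (the bookkeeping showing that extending a matrix-sequence only appends zero singular values in a controlled proportion $\ell/t$, so the limiting functional is unchanged) and into the square theory summarized in Section~\ref{sec:sGLT}. The only point worth a sentence of care is that one must apply Definition~\ref{rGLT} and Proposition~\ref{sigmaP} with the \emph{same} value of $t$, so that the extended symbol $E_{r,s}^t(\kappa)$ appearing in the square distribution result matches the one whose singular values agree with those of $\kappa$ a.e.; this is automatic since both statements hold for every admissible $t$.
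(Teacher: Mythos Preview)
Your proposal is correct and matches the paper's own proof essentially step for step: fix $t\ge r\vee s$, use Definition~\ref{rGLT} to pass to $\{E_{r,s}^t(A_\nn)\}_n\sim_{\textup{GLT}}E_{r,s}^t(\kappa)$, apply the square \textbf{GLT\,1} to get $\sim_\sigma$, and then invoke Proposition~\ref{sigmaP} to descend back to $\{A_\nn\}_n\sim_\sigma\kappa$. There is nothing to add.
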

\begin{proof}
Let $t\ge r\vee s$, where $r\times s$ is the size of $\kappa$.
Since $\{E_{r,s}^t(A_\nn)\}_n\sim_{\textup{GLT}}E_{r,s}^t(\kappa)$ by Definition~\ref{rGLT}, we have $\{E_{r,s}^t(A_\nn)\}_n\sim_\sigma E_{r,s}^t(\kappa)$ by {\bf GLT\,1}, which implies $\{A_\nn\}_n\sim_\sigma\kappa$ by Proposition~\ref{sigmaP}.
\end{proof}

\subsection{Rectangular GLT algebra}
Suppose $\{A_\nn\}_n\sim_{\textup{GLT}}\kappa$ and $\{B_\nn\}_n\sim_{\textup{GLT}}\xi$. If $\kappa$ and $\xi$ are summable, then the same is true for $A_\nn$ and $B_\nn$, and so we can consider the sequence $\{\alpha A_\nn+\beta B_\nn\}_n$ for $\alpha,\beta\in\mathbb C$. Similarly, if $\kappa$ and $\xi$ are multipliable, then the same is true for $A_\nn$ and $B_\nn$, and so we can consider the sequence $\{A_\nn B_\nn\}_n$. The next theorem proves the analog of the first part of {\bf GLT\,4} for rectangular GLT sequences.

\begin{theorem}\label{GLT4-parte1}
Let $\{A_\nn\}_n\sim_{\textup{GLT}}\kappa$ and $\{B_\nn\}_n\sim_{\textup{GLT}}\xi$. Then,
\begin{enumerate}[leftmargin=*]
	\item $\{A_\nn^*\}_n\sim_{\textup{GLT}}\kappa^*$,
	\item $\{\alpha A_\nn+\beta B_\nn\}_n\sim_{\textup{GLT}}\alpha\kappa+\beta\xi$ for all $\alpha,\beta\in\mathbb C$ if $\kappa$ and $\xi$ are summable,
	\item $\{A_\nn B_\nn\}_n\sim_{\textup{GLT}}\kappa\xi$ if $\kappa$ and $\xi$ are multipliable.
\end{enumerate}
\end{theorem}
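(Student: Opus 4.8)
The plan is to reduce each of the three statements to the corresponding square property {\bf GLT\,4} by passing through the extension operator, exactly as in the proofs of Theorems~\ref{GLT0-parte1} and~\ref{GLT1}. In each part I would fix an integer $t$ large enough to accommodate all the matrices and symbols involved: $t\ge r\vee s$ in parts~1 and~2, and $t\ge r\vee q\vee s$ in part~3 (where $\kappa$ is $r\times q$-valued and $\xi$ is $q\times s$-valued). By Definition~\ref{rGLT} the hypotheses then become statements about genuine square $d$-level $t$-block GLT sequences, namely $\{E_{r,s}^t(A_\nn)\}_n\sim_{\textup{GLT}}E_{r,s}^t(\kappa)$ and, in parts~2 and~3, $\{E_{r,s}^t(B_\nn)\}_n\sim_{\textup{GLT}}E_{r,s}^t(\xi)$ respectively $\{E_{q,s}^t(B_\nn)\}_n\sim_{\textup{GLT}}E_{q,s}^t(\xi)$, to which square {\bf GLT\,4} applies directly.

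For part~1 I would apply the conjugate-transpose part of square {\bf GLT\,4} to obtain $\{(E_{r,s}^t(A_\nn))^*\}_n\sim_{\textup{GLT}}(E_{r,s}^t(\kappa))^*$, and then rewrite both sides with \eqref{Ersx*}--\eqref{ErsX*}, which give $(E_{r,s}^t(A_\nn))^*=E_{s,r}^t(A_\nn^*)$ and, pointwise in $(\xx,\btheta)$, $(E_{r,s}^t(\kappa))^*=E_{s,r}^t(\kappa^*)$; since $t\ge s\vee r$, Definition~\ref{rGLT} then yields $\{A_\nn^*\}_n\sim_{\textup{GLT}}\kappa^*$. For part~2, summability of $\kappa$ and $\xi$ means they (hence $A_\nn$ and $B_\nn$) have the same size $r\times s$, so $\alpha A_\nn+\beta B_\nn$ is again a $d$-level $(r,s)$-block matrix; using that $E_{r,s}^t$ is linear on $(r,s)$-block matrices and on functions, $\{E_{r,s}^t(\alpha A_\nn+\beta B_\nn)\}_n=\{\alpha E_{r,s}^t(A_\nn)+\beta E_{r,s}^t(B_\nn)\}_n$ is, by the linear-combination part of square {\bf GLT\,4}, a GLT sequence with symbol $\alpha E_{r,s}^t(\kappa)+\beta E_{r,s}^t(\xi)=E_{r,s}^t(\alpha\kappa+\beta\xi)$, and Definition~\ref{rGLT} finishes the argument.

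For part~3 I would invoke the product part of square {\bf GLT\,4} for the two square $d$-level $t$-block GLT sequences $\{E_{r,q}^t(A_\nn)\}_n\sim_{\textup{GLT}}E_{r,q}^t(\kappa)$ and $\{E_{q,s}^t(B_\nn)\}_n\sim_{\textup{GLT}}E_{q,s}^t(\xi)$, obtaining $\{E_{r,q}^t(A_\nn)E_{q,s}^t(B_\nn)\}_n\sim_{\textup{GLT}}E_{r,q}^t(\kappa)E_{q,s}^t(\xi)$. The multiplicativity identities \eqref{E(xy)}--\eqref{ah} then collapse both sides, $E_{r,q}^t(A_\nn)E_{q,s}^t(B_\nn)=E_{r,s}^t(A_\nn B_\nn)$ and (pointwise for the symbols) $E_{r,q}^t(\kappa)E_{q,s}^t(\xi)=E_{r,s}^t(\kappa\xi)$, so $\{E_{r,s}^t(A_\nn B_\nn)\}_n\sim_{\textup{GLT}}E_{r,s}^t(\kappa\xi)$ and, since $t\ge r\vee s$, Definition~\ref{rGLT} gives $\{A_\nn B_\nn\}_n\sim_{\textup{GLT}}\kappa\xi$.

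There is no genuine analytic obstacle: the whole argument is a transfer through the extension operator, and the only point requiring care is the size bookkeeping in part~3 — one must pick $t\ge r\vee q\vee s$ (not merely $t\ge r\vee s$) so that the three extension operators $E_{r,q}^t$, $E_{q,s}^t$, $E_{r,s}^t$ are all defined, and one must observe that $A_\nn B_\nn$ is indeed a $d$-level $(r,s)$-block matrix so that \eqref{ah} is applicable. Once this is in place, every step is an immediate appeal to square {\bf GLT\,4}, the algebraic properties of the extension operator collected in Section~\ref{eo}, and Definition~\ref{rGLT}.
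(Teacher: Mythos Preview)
Your proposal is correct and follows essentially the same route as the paper's proof: in each part you pass to the extended (square) sequences via Definition~\ref{rGLT}, apply the corresponding clause of square {\bf GLT\,4}, and then use the algebraic identities \eqref{Ersx*}--\eqref{ErsX*}, the linearity of $E_{r,s}^t$, and \eqref{E(xy)}--\eqref{ah} to rewrite the result as an extension and conclude by Definition~\ref{rGLT}. Your explicit remark about choosing $t\ge r\vee q\vee s$ in part~3 is exactly the care the paper also takes.
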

\begin{proof}
1. Let $t\ge r\vee s$, where $r\times s$ is the size of $\kappa$. By \eqref{Ersx*}--\eqref{ErsX*}, Definition~\ref{rGLT} and {\bf GLT\,4},
\[ \{E_{s,r}^t(A_\nn^*)\}_n=\{(E_{r,s}^t(A_\nn))^*\}_n\sim_{\textup{GLT}}(E_{r,s}^t(\kappa))^*=E_{s,r}^t(\kappa^*). \]
We conclude that $\{A_\nn^*\}_n\sim_{\textup{GLT}}\kappa^*$ by Definition~\ref{rGLT}.

2. Let $t\ge r\vee s$, where $r\times s$ is the size of $\kappa$ and $\xi$. By the linearity of the extension operator, Definition~\ref{rGLT} and {\bf GLT\,4},
\begin{align*}
\{E_{r,s}^t(\alpha A_\nn+\beta B_\nn)\}_n&=\{\alpha E_{r,s}^t(A_\nn)+\beta E_{r,s}^t(B_\nn)\}_n\\
&\sim_{\textup{GLT}}\alpha E_{r,s}^t(\kappa)+\beta E_{r,s}^t(\xi)=E_{r,s}^t(\alpha\kappa+\beta\xi).
\end{align*}
We conclude that $\{\alpha A_\nn+\beta B_\nn\}_n\sim_{\textup{GLT}}\alpha\kappa+\beta\xi$ by Definition~\ref{rGLT}.

3. Let $t\ge r\vee q\vee s$, where $r\times q$ is the size of $\kappa$ and $q\times s$ is the size of $\xi$. By \eqref{E(xy)}--\eqref{ah}, Definition~\ref{rGLT} and {\bf GLT\,4},
\begin{align*}
\{E_{r,s}^t(A_\nn B_\nn)\}_n=\{E_{r,q}^t(A_\nn)E_{q,s}^t(B_\nn)\}_n\sim_{\textup{GLT}}E_{r,q}^t(\kappa)E_{q,s}^t(\xi)=E_{r,s}^t(\kappa\xi).
\end{align*}
We conclude that $\{A_\nn B_\nn\}_n\sim_{\textup{GLT}}\kappa\xi$ by Definition~\ref{rGLT}.
\end{proof}

To prove the analog of the second part of {\bf GLT\,4} for rectangular GLT sequences, we need to recall some properties of the Moore--Penrose pseudoinverse \cite[Section~7.6]{Bini}. If $A=U\Sigma V$ is a singular value decomposition (SVD) of the $m\times n$ matrix $A$, then $A^\dag=V^*\Sigma^\dag U^*$. Here, $\Sigma^\dag$ is the Moore--Penrose pseudoinverse of $\Sigma$, i.e., the $n\times m$ diagonal matrix such that, for $i=1,\ldots,m\wedge n$, $(\Sigma^\dag)_{ii}=1/\Sigma_{ii}$ if $\Sigma_{ii}\ne0$ and $(\Sigma^\dag)_{ii}=0$ otherwise. If $A$ is an $m\times n$ full rank matrix, then $A^\dag$ can be expressed as follows:
\begin{equation}\label{crocino}
A^\dag=\left\{\begin{aligned}
&A^*(AA^*)^{-1}, &&\mbox{if }\,m\le n,\\
&(A^*A)^{-1}A^*, &&\mbox{if }\,m\ge n.
\end{aligned}\right.
\end{equation}
Note that $A^\dag=A^{-1}$ whenever $A$ is a square invertible matrix. Theorem~\ref{GLT4-parte2} proves the analog of the second part of {\bf GLT\,4} for rectangular GLT sequences.

\begin{theorem}\label{GLT4-parte2}
If $\{A_\nn\}_n\sim_{\textup{GLT}}\kappa$ and $\kappa$ has full rank a.e.\ then $\{A_\nn^\dag\}_n\sim_{\textup{GLT}}\kappa^\dag$.
\end{theorem}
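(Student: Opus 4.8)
The temptation is to proceed as in the earlier rectangular results, extending $\{A_\nn\}_n$ to the square sequence $\{E_{r,s}^t(A_\nn)\}_n\sim_{\textup{GLT}}E_{r,s}^t(\kappa)$ and invoking the square pseudoinverse property \textbf{GLT\,4}. This does not work: $E_{r,s}^t(\kappa)$ has only $r\wedge s<t$ nonzero singular values a.e., hence is never invertible a.e., so \textbf{GLT\,4} is inapplicable to the extended sequence; and $A_\nn$ itself need not have full rank. The remedy is to \emph{symmetrize}. For every matrix one has the rank-free identities $A_\nn^\dag=A_\nn^*(A_\nn A_\nn^*)^\dag=(A_\nn^*A_\nn)^\dag A_\nn^*$ (immediate from an SVD of $A_\nn$), and the point is that $A_\nn A_\nn^*$ and $A_\nn^*A_\nn$ are \emph{square} GLT sequences whose symbols, under the full-rank hypothesis on $\kappa$, are invertible a.e.

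Concretely, suppose first $r\le s$, so $\kappa$ has rank $r$ a.e.\ and the $r\times r$ function $\kappa\kappa^*$ is invertible a.e. By Theorem~\ref{GLT4-parte1}, parts (1) and (3), $\{A_\nn^*\}_n\sim_{\textup{GLT}}\kappa^*$ and $\{A_\nn A_\nn^*\}_n\sim_{\textup{GLT}}\kappa\kappa^*$; the latter is a square $d$-level $r$-block GLT sequence (for square sequences the rectangular and classical notions coincide, by the remark after Definition~\ref{rGLT}), so the square property \textbf{GLT\,4} gives $\{(A_\nn A_\nn^*)^\dag\}_n\sim_{\textup{GLT}}(\kappa\kappa^*)^{-1}$. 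Since $\kappa^*$ (of size $s\times r$) and $(\kappa\kappa^*)^{-1}$ (of size $r\times r$) are multipliable, Theorem~\ref{GLT4-parte1}(3) yields $\{A_\nn^\dag\}_n=\{A_\nn^*(A_\nn A_\nn^*)^\dag\}_n\sim_{\textup{GLT}}\kappa^*(\kappa\kappa^*)^{-1}$, and by \eqref{crocino} (valid a.e., since $\kappa$ has full rank a.e.\ and $r\le s$) this symbol equals $\kappa^\dag$; Theorem~\ref{GLT0-parte1} then gives $\{A_\nn^\dag\}_n\sim_{\textup{GLT}}\kappa^\dag$. The case $r\ge s$ is symmetric, via $A_\nn^\dag=(A_\nn^*A_\nn)^\dag A_\nn^*$, the square GLT sequence $\{A_\nn^*A_\nn\}_n\sim_{\textup{GLT}}\kappa^*\kappa$ with $\kappa^*\kappa$ invertible a.e., and the second line of \eqref{crocino}; when $r=s$ both routes reproduce the classical statement with $\kappa^\dag=\kappa^{-1}$.

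The main obstacle is the opening realization that the extension operator---which powered every previous rectangular GLT result---is the wrong tool here, since it turns a full-rank rectangular symbol into a rank-deficient square one; symmetrizing repairs this, and the identity $A_\nn^\dag=A_\nn^*(A_\nn A_\nn^*)^\dag$ sidesteps any rank assumption on $A_\nn$. I also note where Lemma~\ref{sv-easy-version} belongs: it is the engine behind the square \textbf{GLT\,4} pseudoinverse property used above, so a proof that does not cite that property as a black box would instead approximate $A_\nn^\dag$ by the Tikhonov-regularized pseudoinverses $B_{\nn,m}:=(A_\nn^*A_\nn+\varepsilon_m I)^{-1}A_\nn^*$ ($\varepsilon_m\downarrow 0$), whose symbols $(\kappa^*\kappa+\varepsilon_m I)^{-1}\kappa^*\to\kappa^\dag$ in measure are obtained from Theorem~\ref{GLT4-parte1} and the functional calculus \textbf{GLT\,5} applied to the Hermitian positive semidefinite sequence $\{A_\nn^*A_\nn\}_n\sim_{\textup{GLT}}\kappa^*\kappa$; one then shows $\{B_{\nn,m}\}_n\xrightarrow{\textup{a.c.s.}}\{A_\nn^\dag\}_n$ by splitting the SVD difference $B_{\nn,m}-A_\nn^\dag$ into a part supported on the singular values of $A_\nn$ in $(0,\delta_m]$---whose count is $o(N(\nn)(r\wedge s))$ by Theorem~\ref{GLT1}, the full-rank hypothesis, and the reasoning behind Lemma~\ref{sv-easy-version}---and a remainder of norm $\le\varepsilon_m/\delta_m^3$, for suitable $\delta_m\downarrow 0$ with $\varepsilon_m/\delta_m^3\to0$, and concludes via \textbf{GLT\,7} after extending by $E_{s,r}^t$ (Definition~\ref{rGLT}, Proposition~\ref{e.a.c.s.}).
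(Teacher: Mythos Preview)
Your proof is correct and takes a route that is closely related to, but somewhat cleaner than, the paper's. Both arguments reduce to the square pseudoinverse property \textbf{GLT\,4} applied to a Hermitian $d$-level $r$-block sequence with symbol $\kappa\kappa^*$ (assuming $r\le s$). The difference lies in how the possible rank deficiency of $A_\nn$ is handled. You invoke the universal identity $A_\nn^\dag=A_\nn^*(A_\nn A_\nn^*)^\dag$, valid for \emph{any} matrix by a one-line SVD computation, and apply the square \textbf{GLT\,4} directly to the pseudoinverse $(A_\nn A_\nn^*)^\dag$. The paper instead perturbs $A_\nn$ by a matrix $Z_\nn$ supported on the zero singular values, obtaining a full-rank $B_\nn=A_\nn+Z_\nn$ for which the explicit formula $B_\nn^\dag=B_\nn^*(B_\nn B_\nn^*)^{-1}$ with a genuine inverse applies; the GLT-negligibility of $Z_\nn$ and $Z_\nn^\dag$ is established via Lemma~\ref{sv-easy-version}, which is invoked explicitly at that step. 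Your approach is more direct and absorbs the appeal to Lemma~\ref{sv-easy-version} into the square \textbf{GLT\,4} black box; the paper's approach makes that dependence visible, at the cost of the extra perturbation argument. Your closing Tikhonov sketch essentially unpacks that black box, so you have clearly understood both layers of the argument.
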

\begin{proof}
Let $A_\nn=U_\nn\Sigma_\nn V_\nn$ be an SVD of $A_\nn$ and let $Z_\nn=U_\nn\Psi_\nn V_\nn$, where $\Psi_\nn$ is the rectangular diagonal matrix of the same size as $\Sigma_\nn$ such that $(\Psi_\nn)_{ii}=1$ if $(\Sigma_\nn)_{ii}=0$ and $(\Psi_\nn)_{ii}=0$ otherwise.
The rank of $Z_\nn$ is the number of zero singular values of $A_\nn$, which is $o(N(\nn))$ by Theorem~\ref{GLT1} and Lemma~\ref{sv-easy-version}, since $\kappa$ has full rank a.e. Hence, the sequence $\{Z_\nn\}_n$ is zero-distributed by Definition~\ref{0d}, and so $\{Z_\nn\}_n\sim_{\textup{GLT}}O_{r,s}$, with $r\times s$ being the size of $\kappa$. Let
\begin{equation}\label{partial}
B_\nn = A_\nn+Z_\nn = U_\nn(\Sigma_\nn+\Psi_\nn)V_\nn
\end{equation}
and note that $\{B_\nn\}_n\sim_{\textup{GLT}}\kappa$ by Theorem~\ref{GLT4-parte1}. The matrix $B_\nn$ has full rank by construction and so, by \eqref{partial} and \eqref{crocino},\footnote{We here assume that $r\le s$. If $r\ge s$, nothing changes in the proof except the fact that we have to use for the pseudoinverse $B_\nn^\dag$ the other expression in \eqref{crocino}.}
\[ B_\nn^\dag = A_\nn^\dag + Z_\nn^\dag = B^*_\nn (B_\nn B_\nn^*)^{-1}. \]
The rank of $Z_\nn^\dag$ is the same as the rank of $Z_\nn$, which implies that $\{Z_\nn^\dag\}_n$ is zero-distributed and $\{Z_\nn^\dag\}_n\sim_{\textup{GLT}}O_{s,r}$.
Moreover, $\{B_\nn B_\nn^*\}_n$ is a square GLT sequence with symbol $\kappa\kappa^*$ by Theorem~\ref{GLT4-parte1}, and $\kappa\kappa^*$ is invertible a.e.\ because $\kappa$ has full rank a.e. We can therefore use \textbf{GLT\,4} to obtain
\[ \{(B_\nn B_\nn^*)^{-1}\}_n = \{(B_\nn B_\nn^*)^\dag\}_n \sim_{\textup{GLT}} (\kappa\kappa^*)^{-1}. \]
Using again Theorem~\ref{GLT4-parte1}, we conclude that
\[ \{A_\nn^\dag\}_n = \{B_\nn^\dag-Z_\nn^\dag\}_n = \{B_\nn^*(B_\nn B_\nn^*)^{-1}-Z_\nn^\dag\}_n\sim_{\textup{GLT}} \kappa^*(\kappa\kappa^*)^{-1} = \kappa^\dag, \]
and the theorem is proved.
\end{proof}

\subsection{Convergence results for rectangular GLT sequences}
The next theorem proves the analog of {\bf GLT\,7} for rectangular GLT sequences.

\begin{theorem}\label{GLT7}
Let $\{A_\nn\}_n$ be a $d$-level $(r,s)$-block matrix-sequence and let $\kappa:[0,1]^d\times[-\pi,\pi]^d\to\mathbb C^{r\times s}$ be measurable. Suppose that
\begin{itemize}[leftmargin=*]
	\item $\{B_{\nn,m}\}_n\sim_{\textup{GLT}}\kappa_m$,
	\item $\{B_{\nn,m}\}_n\xrightarrow{\textup{a.c.s.}}\{A_\nn\}_n$,
	\item $\kappa_m\to\kappa$ in measure.
\end{itemize}
Then $\{A_\nn\}_n\sim_{\textup{GLT}}\kappa$.
\end{theorem}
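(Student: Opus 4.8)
The plan is to lift the hypotheses to the square setting via the extension operator $E_{r,s}^t$, apply the square version of \textbf{GLT\,7}, and then descend again through Definition~\ref{rGLT}; this is the same strategy used in the proofs of Theorems~\ref{GLT1} and \ref{GLT4-parte1}. Fix any $t\ge r\vee s$. First, from $\{B_{\nn,m}\}_n\sim_{\textup{GLT}}\kappa_m$ and Definition~\ref{rGLT} we obtain, for every $m$, that $\{E_{r,s}^t(B_{\nn,m})\}_n\sim_{\textup{GLT}}E_{r,s}^t(\kappa_m)$ in the square sense. Second, from $\{B_{\nn,m}\}_n\xrightarrow{\textup{a.c.s.}}\{A_\nn\}_n$ and Proposition~\ref{e.a.c.s.} we get $\{E_{r,s}^t(B_{\nn,m})\}_n\xrightarrow{\textup{a.c.s.}}\{E_{r,s}^t(A_\nn)\}_n$.

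Third, we must check that $E_{r,s}^t(\kappa_m)\to E_{r,s}^t(\kappa)$ in measure. This is the only point that deserves an explicit word, and it is immediate in two equivalent ways: the components of $E_{r,s}^t(\kappa_m)$ are either components of $\kappa_m$ or identically zero, occupying exactly the same positions as for $E_{r,s}^t(\kappa)$, so componentwise convergence in measure transfers verbatim; alternatively, since the extension operator preserves the spectral norm, $\|E_{r,s}^t(\kappa_m(\xx,\btheta))-E_{r,s}^t(\kappa(\xx,\btheta))\|=\|\kappa_m(\xx,\btheta)-\kappa(\xx,\btheta)\|$, so the relevant super-level sets coincide and hence have the same measure.

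With these three facts in hand, \textbf{GLT\,7} applied to $\{E_{r,s}^t(A_\nn)\}_n$, with approximating square GLT sequences $\{E_{r,s}^t(B_{\nn,m})\}_n\sim_{\textup{GLT}}E_{r,s}^t(\kappa_m)$ and limit $E_{r,s}^t(\kappa)$, yields $\{E_{r,s}^t(A_\nn)\}_n\sim_{\textup{GLT}}E_{r,s}^t(\kappa)$. By Definition~\ref{rGLT} this is precisely the thesis $\{A_\nn\}_n\sim_{\textup{GLT}}\kappa$. There is no genuine obstacle: the argument is a direct transcription of the square case, the extension operator doing all the bookkeeping, and the preservation of convergence in measure under extension being the only (trivial) new ingredient.
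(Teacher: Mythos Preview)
Your proof is correct and follows exactly the same strategy as the paper's: lift via $E_{r,s}^t$, apply the square \textbf{GLT\,7}, and descend via Definition~\ref{rGLT}. The only difference is that you spell out the preservation of convergence in measure under $E_{r,s}^t$, which the paper dismisses as ``obvious''.
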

\begin{proof}
Let $t\ge r\vee s$. We have
\begin{itemize}[leftmargin=*]
	\item $\{E_{r,s}^t(B_{\nn,m})\}_n\sim_{\textup{GLT}}E_{r,s}^t(\kappa_m)$ by Definition~\ref{rGLT},
	\item $\{E_{r,s}^t(B_{\nn,m})\}_n\xrightarrow{\textup{a.c.s.}}\{E_{r,s}^t(A_\nn)\}_n$ by Proposition~\ref{e.a.c.s.},
	\item $E_{r,s}^t(\kappa_m)\to E_{r,s}^t(\kappa)$ in measure (obviously).
\end{itemize}
We conclude by {\bf GLT\,7} that $\{E_{r,s}^t(A_\nn)\}_n\sim_{\textup{GLT}}E_{r,s}^t(\kappa)$, and so $\{A_\nn\}_n\sim_{\textup{GLT}}\kappa$ by Definition~\ref{rGLT}.
\end{proof}

The next theorem proves the analog of {\bf GLT\,8} for rectangular GLT sequences.

\begin{theorem}\label{GLT8}
Let $\{A_\nn\}_n\sim_{\textup{GLT}}\kappa$ and $\{B_{\nn,m}\}_n\sim_{\textup{GLT}}\kappa_m$. Then,
\[ \{B_{\nn,m}\}_n\xrightarrow{\textup{a.c.s.}}\{A_\nn\}_n\ \iff\ \kappa_m\to\kappa\mbox{ \,in measure}. \]
\end{theorem}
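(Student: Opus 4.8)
The plan is to reduce Theorem~\ref{GLT8} to its square counterpart \textbf{GLT\,8} by applying the extension operator, exactly in the spirit of the proofs of Theorems~\ref{GLT7}, \ref{GLT1}, and \ref{GLT0-parte1}. Fix $t\ge r\vee s$. By Definition~\ref{rGLT}, the hypotheses $\{A_\nn\}_n\sim_{\textup{GLT}}\kappa$ and $\{B_{\nn,m}\}_n\sim_{\textup{GLT}}\kappa_m$ are equivalent to $\{E_{r,s}^t(A_\nn)\}_n\sim_{\textup{GLT}}E_{r,s}^t(\kappa)$ and $\{E_{r,s}^t(B_{\nn,m})\}_n\sim_{\textup{GLT}}E_{r,s}^t(\kappa_m)$, so the square $d$-level $t$-block GLT sequences $\{E_{r,s}^t(A_\nn)\}_n$ and $\{E_{r,s}^t(B_{\nn,m})\}_n$ are in a position to have \textbf{GLT\,8} applied to them.

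First I would prove the forward implication. Assume $\{B_{\nn,m}\}_n\xrightarrow{\textup{a.c.s.}}\{A_\nn\}_n$. By Proposition~\ref{e.a.c.s.} this is equivalent to $\{E_{r,s}^t(B_{\nn,m})\}_n\xrightarrow{\textup{a.c.s.}}\{E_{r,s}^t(A_\nn)\}_n$. Now \textbf{GLT\,8} applied to the square GLT sequences gives $E_{r,s}^t(\kappa_m)\to E_{r,s}^t(\kappa)$ in measure. Since $E_{r,s}^t$ merely pads the matrix-valued functions with zero rows and columns, convergence in measure of $E_{r,s}^t(\kappa_m)$ to $E_{r,s}^t(\kappa)$ is equivalent (componentwise, by definition of convergence in measure for matrix-valued functions) to $\kappa_m\to\kappa$ in measure. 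Hence the conclusion.

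For the converse implication, assume $\kappa_m\to\kappa$ in measure. As just observed, this is equivalent to $E_{r,s}^t(\kappa_m)\to E_{r,s}^t(\kappa)$ in measure. Applying \textbf{GLT\,8} to the square GLT sequences $\{E_{r,s}^t(A_\nn)\}_n\sim_{\textup{GLT}}E_{r,s}^t(\kappa)$ and $\{E_{r,s}^t(B_{\nn,m})\}_n\sim_{\textup{GLT}}E_{r,s}^t(\kappa_m)$ yields $\{E_{r,s}^t(B_{\nn,m})\}_n\xrightarrow{\textup{a.c.s.}}\{E_{r,s}^t(A_\nn)\}_n$, and Proposition~\ref{e.a.c.s.} then gives $\{B_{\nn,m}\}_n\xrightarrow{\textup{a.c.s.}}\{A_\nn\}_n$, completing the proof.

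There is essentially no obstacle here: every ingredient is already in place. The only point requiring a word of care is the trivial equivalence between convergence in measure of $\{\kappa_m\}$ to $\kappa$ and of $\{E_{r,s}^t(\kappa_m)\}$ to $E_{r,s}^t(\kappa)$, but this is immediate from the componentwise definition of convergence in measure for matrix-valued functions given in Section~\ref{noterm}, since the two families of functions have the same nonzero components and the padded entries are identically zero. In short, the proof is a direct transfer argument via the extension operator, identical in structure to the proof of Theorem~\ref{GLT7}, and I expect it to be only a few lines long.
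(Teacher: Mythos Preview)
Your proposal is correct and follows essentially the same approach as the paper: fix $t\ge r\vee s$, pass to the extended sequences via Definition~\ref{rGLT}, and then chain the equivalences given by Proposition~\ref{e.a.c.s.}, the square \textbf{GLT\,8}, and the trivial componentwise equivalence of convergence in measure under $E_{r,s}^t$. The paper presents this as a single chain of if-and-only-ifs rather than splitting into two implications, but the argument is identical.
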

\begin{proof}
Let $t\ge r\vee s$, where $r\times s$ is the size of $\kappa$ and $\kappa_m$. By Definition~\ref{rGLT}, we have $\{E_{r,s}^t(A_\nn)\}_n\sim_{\textup{GLT}}E_{r,s}^t(\kappa)$ and $\{E_{r,s}^t(B_{\nn,m})\}_n\sim_{\textup{GLT}}E_{r,s}^t(\kappa_m)$. Thus, by Proposition~\ref{e.a.c.s.} and {\bf GLT\,8},
\begin{align*}
\{B_{\nn,m}\}_n\xrightarrow{\textup{a.c.s.}}\{A_\nn\}_n&\ \iff\ \{E_{r,s}^t(B_{\nn,m})\}_n\xrightarrow{\textup{a.c.s.}}\{E_{r,s}^t(A_\nn)\}_n\\\
&\ \iff\ E_{r,s}^t(\kappa_m)\to E_{r,s}^t(\kappa)\mbox{ \,in measure}\\
&\ \iff\ \kappa_m\to\kappa\mbox{ \,in measure}
\end{align*}
and the theorem is proved.
\end{proof}

The next theorem proves the analog of {\bf GLT\,9} for rectangular GLT sequences.

\begin{theorem}\label{GLT9}
Let $\{A_\nn\}_n\sim_{\textup{GLT}}\kappa$. Then, there exist functions $a_{i,m}$, $f_{i,m}$, $i=1,\ldots,N_m$, such that
\begin{itemize}[leftmargin=*]
	\item $a_{i,m}:[0,1]^d\to\mathbb C$ belongs to $C^\infty([0,1]^d)$ and $f_{i,m}$ is a trigonometric monomial in $\{{\rm e}^{{\rm i}\jj\cdot\btheta}E_{\alpha\beta}^{(r,s)}:\jj\in\mathbb Z^d,\ 1\le\alpha\le r,\ 1\le \beta\le s\}$ with $r\times s$ being the size of $\kappa$,
	\item $\kappa_m(\xx,\btheta)=\sum_{i=1}^{N_m}a_{i,m}(\xx)f_{i,m}(\btheta)\to\kappa(\xx,\btheta)$ a.e., \vspace{3pt}
	\item $\{B_{\nn,m}\}_n=\bigl\{\sum_{i=1}^{N_m}D_\nn(a_{i,m}I_r)T_\nn(f_{i,m})\bigr\}_n\xrightarrow{\textup{a.c.s.}}\{A_\nn\}_n$.
\end{itemize}
\end{theorem}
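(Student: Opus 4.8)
The plan is to imitate the proof of the square version \textbf{GLT\,9}, working directly with $\kappa$ rather than with its extension $E_{r,s}^t(\kappa)$. Passing through the extension operator would be awkward here: applying the square \textbf{GLT\,9} to $\{E_{r,s}^t(A_\nn)\}_n$ produces trigonometric monomials indexed by $E_{\alpha\beta}^{(t)}$ with $\alpha,\beta$ up to $t$, whereas the statement demands monomials $E_{\alpha\beta}^{(r,s)}$ with $\alpha\le r$ and $\beta\le s$, and the individual summands of a convergent series need not inherit the block support of their limit. So instead I would run the approximation argument at the level of $\kappa$ and only invoke the rectangular machinery of the previous subsections.

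First I would construct the functions $\kappa_m$. Applying Lemma~\ref{l2.4} to each scalar component $\kappa_{\alpha\beta}:[0,1]^d\times[-\pi,\pi]^d\to\mathbb C$ of $\kappa$, I obtain, for every $\alpha\le r$ and $\beta\le s$, functions $a_{\jj,\alpha\beta}^{(m)}\in C^\infty([0,1]^d)$ and $\boldsymbol N_m^{\alpha\beta}\in\mathbb N^d$ such that $\sum_{\jj=-\boldsymbol N_m^{\alpha\beta}}^{\boldsymbol N_m^{\alpha\beta}}a_{\jj,\alpha\beta}^{(m)}(\xx)\,{\rm e}^{{\rm i}\jj\cdot\btheta}\to\kappa_{\alpha\beta}(\xx,\btheta)$ a.e. Reassembling these componentwise expansions into a matrix-valued function and relabelling the (finitely many) summands gives functions $a_{i,m}\in C^\infty([0,1]^d)$ and trigonometric monomials $f_{i,m}$ in $\{{\rm e}^{{\rm i}\jj\cdot\btheta}E_{\alpha\beta}^{(r,s)}:\jj\in\mathbb Z^d,\ 1\le\alpha\le r,\ 1\le\beta\le s\}$, $i=1,\ldots,N_m$, with $\kappa_m(\xx,\btheta):=\sum_{i=1}^{N_m}a_{i,m}(\xx)f_{i,m}(\btheta)\to\kappa(\xx,\btheta)$ a.e.; since $[0,1]^d\times[-\pi,\pi]^d$ has finite measure, this also yields $\kappa_m\to\kappa$ in measure. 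This settles the first two bullets.

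Next I would set $B_{\nn,m}=\sum_{i=1}^{N_m}D_\nn(a_{i,m}I_r)T_\nn(f_{i,m})$ and check that $\{B_{\nn,m}\}_n\sim_{\textup{GLT}}\kappa_m$ for each fixed $m$. Here $D_\nn(a_{i,m}I_r)$ has size $N(\nn)r\times N(\nn)r$ and $T_\nn(f_{i,m})$ has size $N(\nn)r\times N(\nn)s$, so each product, and hence $B_{\nn,m}$, has size $N(\nn)r\times N(\nn)s$, i.e.\ $\{B_{\nn,m}\}_n$ is a $d$-level $(r,s)$-block matrix-sequence. By the fundamental examples of rectangular GLT sequences in Section~\ref{GLT3}, $\{D_\nn(a_{i,m}I_r)\}_n\sim_{\textup{GLT}}a_{i,m}(\xx)I_r$ (a diagonal sampling sequence with continuous $(r,r)$-block symbol) and $\{T_\nn(f_{i,m})\}_n\sim_{\textup{GLT}}f_{i,m}(\btheta)$ (a rectangular Toeplitz sequence). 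Since $a_{i,m}(\xx)I_r$ (size $r\times r$) and $f_{i,m}(\btheta)$ (size $r\times s$) are multipliable, Theorem~\ref{GLT4-parte1}(3) gives $\{D_\nn(a_{i,m}I_r)T_\nn(f_{i,m})\}_n\sim_{\textup{GLT}}a_{i,m}(\xx)f_{i,m}(\btheta)$, and summing the $N_m$ contributions via Theorem~\ref{GLT4-parte1}(2) yields $\{B_{\nn,m}\}_n\sim_{\textup{GLT}}\sum_{i=1}^{N_m}a_{i,m}(\xx)f_{i,m}(\btheta)=\kappa_m$.

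Finally, from $\{A_\nn\}_n\sim_{\textup{GLT}}\kappa$, $\{B_{\nn,m}\}_n\sim_{\textup{GLT}}\kappa_m$, and $\kappa_m\to\kappa$ in measure, the rectangular analog of \textbf{GLT\,8} (Theorem~\ref{GLT8}) gives $\{B_{\nn,m}\}_n\xrightarrow{\textup{a.c.s.}}\{A_\nn\}_n$, which is the third bullet. I do not expect a genuine obstacle here; the only points requiring care are the dimension bookkeeping---keeping $\{D_\nn(a_{i,m}I_r)\}_n$ square and $\{T_\nn(f_{i,m})\}_n$ rectangular so that the products are legitimate $(r,s)$-block sequences with multipliable symbols---and the observation that a.e.\ convergence on the finite-measure domain $[0,1]^d\times[-\pi,\pi]^d$ entails convergence in measure, so that Theorem~\ref{GLT8} is applicable.
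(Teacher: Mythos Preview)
Your proposal is correct and follows essentially the same route as the paper: construct $\kappa_m$ from Lemma~\ref{l2.4} (applied componentwise), show $\{B_{\nn,m}\}_n\sim_{\textup{GLT}}\kappa_m$ via the rectangular fundamental examples and Theorem~\ref{GLT4-parte1}, and then invoke Theorem~\ref{GLT8} to obtain the a.c.s.\ convergence. You are merely more explicit than the paper about the componentwise application of Lemma~\ref{l2.4} and about the passage from a.e.\ convergence to convergence in measure.
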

\begin{proof}
By Lemma~\ref{l2.4}, there exist functions $a_{i,m}$, $f_{i,m}$, $i=1,\ldots,N_m$, such that $a_{i,m}:[0,1]^d\to\mathbb C$ belongs to $C^\infty([0,1]^d)$, $f_{i,m}$ is a trigonometric monomial in $\{{\rm e}^{{\rm i}\jj\cdot\btheta}E_{\alpha\beta}^{(r,s)}:\jj\in\mathbb Z^d,\ 1\le\alpha\le r,\ 1\le \beta\le s\}$, and
\begin{equation*}
\kappa_m(\xx,\btheta)=\sum_{i=1}^{N_m}a_{i,m}(\xx)f_{i,m}(\btheta)\to\kappa(\xx,\btheta)\mbox{ \,a.e.}
\end{equation*}
Since $\{D_\nn(a_{i,m}I_r)\}_n\sim_{\textup{GLT}}a_{i,m}(\xx)I_r$ and $\{T_\nn(f_{i,m})\}_n\sim_{\textup{GLT}}f(\btheta)$ (see Section~\ref{GLT3}), Theorem~\ref{GLT4-parte1} yields
\[ \{B_{\nn,m}\}_n=\Biggl\{\sum_{i=1}^{N_m}D_\nn(a_{i,m}I_r)T_\nn(f_{i,m})\Biggr\}_n\sim_{\textup{GLT}}\kappa(\xx,\btheta). \]
We conclude that $\{B_{\nn,m}\}_n\xrightarrow{\textup{a.c.s.}}\{A_\nn\}_n$ by Theorem~\ref{GLT8}.
\end{proof}

\subsection{Relations between rectangular GLT sequences of different size}
In this section, 
we prove a stronger version of {\bf GLT\,6} for rectangular GLT sequences. It should be considered not only as the analog of {\bf GLT\,6} for rectangular GLT sequences but also as an addendum to the theory of square GLT sequences developed in \cite{bgd}.

\begin{theorem}\label{GLT6plus1}
Let $\{A_\nn=[a_{\ii\jj}^{(\nn)}]_{\ii,\jj=\bu}^\nn\}_n$ be a $d$-level $(r,s)$-block GLT sequence with symbol $\kappa$.
If we restrict each $r\times s$ block $a_{\ii\jj}^{(\nn)}$ to the same $\tilde r\times\tilde s$ submatrix $\tilde a_{\ii\jj}^{(\nn)}$, 
then we obtain a $d$-level $(\tilde r,\tilde s)$-block GLT sequence $\{\tilde A_\nn=[\tilde a_{\ii\jj}^{(\nn)}]_{\ii,\jj=\bu}^\nn\}_n$ whose symbol $\tilde\kappa$ is the corresponding $\tilde r\times\tilde s$ submatrix of $\kappa$.
\end{theorem}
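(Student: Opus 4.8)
The plan is to realize the passage from $\{A_\nn\}_n$ to $\{\tilde A_\nn\}_n$ as multiplication on the left and on the right by suitable \emph{constant} diagonal sampling matrices, and then to invoke the rectangular GLT algebra of Theorem~\ref{GLT4-parte1}.

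First I would fix the row indices $\rho_1<\cdots<\rho_{\tilde r}$ in $\{1,\ldots,r\}$ and the column indices $\sigma_1<\cdots<\sigma_{\tilde s}$ in $\{1,\ldots,s\}$ singling out the chosen submatrix, and set
\[ P=\sum_{h=1}^{\tilde r}E_{h,\rho_h}^{(\tilde r,r)}\in\mathbb C^{\tilde r\times r},\qquad Q=\sum_{l=1}^{\tilde s}E_{\sigma_l,l}^{(s,\tilde s)}\in\mathbb C^{s\times\tilde s}. \]
The hypothesis that \emph{the same} submatrix is extracted from every block is exactly what allows $P$ and $Q$ to be chosen independently of $\ii,\jj,\nn$. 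For every $a\in\mathbb C^{r\times s}$ the product $PaQ$ is precisely the submatrix of $a$ with rows $\rho_1,\ldots,\rho_{\tilde r}$ and columns $\sigma_1,\ldots,\sigma_{\tilde s}$; hence $\tilde a_{\ii\jj}^{(\nn)}=Pa_{\ii\jj}^{(\nn)}Q$ for all $\ii,\jj$ and, likewise, $\tilde\kappa=P\kappa Q$. Using \eqref{dlev_rsblock} together with the tensor identity \eqref{tensorM} one then obtains the key factorization
\[ \tilde A_\nn=\sum_{\ii,\jj=\bu}^\nn E_{\ii\jj}^{(\nn)}\otimes\bigl(Pa_{\ii\jj}^{(\nn)}Q\bigr)=(I_{N(\nn)}\otimes P)\,A_\nn\,(I_{N(\nn)}\otimes Q). \]

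Next I would observe that $I_{N(\nn)}\otimes P=D_\nn(P)$ and $I_{N(\nn)}\otimes Q=D_\nn(Q)$, where on the right $P$ and $Q$ denote the constant $\mathbb C^{\tilde r\times r}$- and $\mathbb C^{s\times\tilde s}$-valued functions on $[0,1]^d$. Since constant functions are continuous, the fundamental examples of Section~\ref{GLT3} give $\{D_\nn(P)\}_n\sim_{\textup{GLT}}P$ (a $d$-level $(\tilde r,r)$-block GLT sequence) and $\{D_\nn(Q)\}_n\sim_{\textup{GLT}}Q$ (a $d$-level $(s,\tilde s)$-block GLT sequence). As $P,\kappa,Q$ are pairwise multipliable (matching inner sizes $r$ and $s$), applying the product rule in Theorem~\ref{GLT4-parte1} twice gives
\[ \{\tilde A_\nn\}_n=\bigl\{D_\nn(P)\,A_\nn\,D_\nn(Q)\bigr\}_n\sim_{\textup{GLT}}P\kappa Q=\tilde\kappa, \]
which is exactly the assertion.

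I do not expect a genuine obstacle: the heavy lifting has already been carried out in Section~\ref{eo}, where the extension operator lifts {\bf GLT\,3} and {\bf GLT\,4} to the rectangular setting, so the statement reduces to the elementary observation that a blockwise submatrix extraction is a two-sided product by constant sampling matrices. The only points needing a little care are the verification of the factorization of $\tilde A_\nn$ via \eqref{tensorM} and the bookkeeping of the multilevel block sizes---so that $D_\nn(P)A_\nn$ is a $(\tilde r,s)$-block sequence and $D_\nn(P)A_\nn D_\nn(Q)$ a $(\tilde r,\tilde s)$-block sequence---which is what makes the two invocations of Theorem~\ref{GLT4-parte1} legitimate. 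It is also worth recording that the same argument, run with arbitrary constant matrices $P\in\mathbb C^{\tilde r\times r}$, $Q\in\mathbb C^{s\times\tilde s}$ in place of selection matrices, proves the more general fact that $\{(I_{N(\nn)}\otimes P)A_\nn(I_{N(\nn)}\otimes Q)\}_n\sim_{\textup{GLT}}P\kappa Q$.
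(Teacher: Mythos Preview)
Your argument is correct and follows the same core idea as the paper's proof: realize the blockwise submatrix extraction as a two-sided product with constant diagonal sampling matrices, then apply the rectangular product rule (Theorem~\ref{GLT4-parte1}). The difference is only in the packaging. The paper multiplies by \emph{square} matrices $\alpha_{r,\tilde r}\chi_{r,\tilde r}\in\mathbb C^{r\times r}$ and $\chi_{s,\tilde s}\beta_{s,\tilde s}\in\mathbb C^{s\times s}$ (a permutation composed with a diagonal $\{0,1\}$-mask), so that the product $D_\nn(\alpha_{r,\tilde r}\chi_{r,\tilde r})A_\nn D_\nn(\chi_{s,\tilde s}\beta_{s,\tilde s})$ is still an $(r,s)$-block sequence with each block carrying $\tilde a_{\ii\jj}^{(\nn)}$ in its upper-left corner and zeros elsewhere; a final appeal to the extension operator and Definition~\ref{rGLT} is then needed to pass from this zero-padded version to $\{\tilde A_\nn\}_n$ itself. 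You instead use genuinely \emph{rectangular} selection matrices $P\in\mathbb C^{\tilde r\times r}$, $Q\in\mathbb C^{s\times\tilde s}$ together with the rectangular diagonal-sampling example of Section~\ref{GLT3}, landing directly on $\{\tilde A_\nn\}_n\sim_{\textup{GLT}}\tilde\kappa$ without the extension-operator step. Your route is a bit more streamlined and, as you note, immediately yields the more general statement $\{(I_{N(\nn)}\otimes P)A_\nn(I_{N(\nn)}\otimes Q)\}_n\sim_{\textup{GLT}}P\kappa Q$ for arbitrary constants $P,Q$; the paper's route has the minor advantage of invoking $\{D_\nn(\cdot)\}_n\sim_{\textup{GLT}}\cdot$ only in the square case.
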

\begin{proof}
Define
{\allowdisplaybreaks\begin{align*}
\chi_{r,\tilde r}&=\textup{diagonal }\{0,1\}\textup{-matrix of size }r\textup{ with }1\textup{ in the positions}\\*
&\hphantom{{}={}}\textup{corresponding to the chosen }\tilde r\textup{ rows},\\
\chi_{s,\tilde s}&=\textup{diagonal }\{0,1\}\textup{-matrix of size }s\textup{ with }1\textup{ in the positions}\\*
&\hphantom{{}={}}\textup{corresponding to the chosen }\tilde s\textup{ columns},\\
\alpha_{r,\tilde r}&=\textup{permutation matrix of size }r\textup{ that moves in order the chosen }\tilde r\textup{ rows}\\*
&\hphantom{{}={}}\textup{to the first }\tilde r\textup{ rows},\\
\beta_{s,\tilde s}&=\textup{permutation matrix of size }s\textup{ that moves in order the chosen }\tilde s\textup{ columns}\\*
&\hphantom{{}={}}\textup{to the first }\tilde s\textup{ columns}.
\end{align*}}%
By definition, we have
\begin{align*}
\alpha_{r,\tilde r}\chi_{r,\tilde r}a_{\ii\jj}^{(\nn)}\chi_{s,\tilde s}\beta_{s,\tilde s}&=\left[\begin{array}{cc}\tilde a_{\ii\jj}^{(\nn)} & O\\[3pt] O & O\end{array}\right]_{r\times s},\quad\ii,\jj=\bu,\ldots,\nn,\\
\alpha_{r,\tilde r}\chi_{r,\tilde r}\kappa\chi_{s,\tilde s}\beta_{s,\tilde s}&=\left[\begin{array}{cc}\tilde\kappa & O\\ O & O\end{array}\right]_{r\times s},
\end{align*}
where the subscript $r\times s$ indicates that the matrix size is $r\times s$.
Since we know that $\{D_\nn(\alpha_{r,\tilde r}\chi_{r,\tilde r})\}_n\sim_{\textup{GLT}}\alpha_{r,\tilde r}\chi_{r,\tilde r}$ and $\{D_\nn(\chi_{s,\tilde s}\beta_{s,\tilde s})\}_n\sim_{\textup{GLT}}\chi_{s,\tilde s}\beta_{s,\tilde s}$, Theorem~\ref{GLT4-parte1} yields
\begin{align}
\label{GLTtilde}\left\{\left[\left[\begin{array}{cc}\tilde a_{\ii\jj}^{(\nn)} & O\\[3pt] O & O\end{array}\right]_{r\times s}\right]_{\ii,\jj=\bu}^\nn\right\}_n&=\{D_\nn(\alpha_{r,\tilde r}\chi_{r,\tilde r})A_\nn D_\nn(\chi_{s,\tilde s}\beta_{s,\tilde s})\}_n\\
\notag&\sim_{\textup{GLT}}\alpha_{r,\tilde r}\chi_{r,\tilde r}\kappa\chi_{s,\tilde s}\beta_{s,\tilde s}=\left[\begin{array}{cc}\tilde\kappa & O\\ O & O\end{array}\right]_{r\times s}.
\end{align}
Let $t\ge r\vee s\ge\tilde r\vee\tilde s$. By \eqref{GLTtilde} and Definition~\ref{rGLT},
\begin{align*}
\{E_{\tilde r,\tilde s}^t(\tilde A_\nn)\}_n&=\{[E_{\tilde r,\tilde s}^t(\tilde a_{\ii\jj}^{(\nn)})]_{\ii,\jj=\bu}^\nn\}_n=\left\{\left[E_{r,s}^t\Biggl(\left[\begin{array}{cc}\tilde a_{\ii\jj}^{(\nn)} & O\\[3pt] O & O\end{array}\right]_{r\times s}\Biggr)\right]_{\ii,\jj=\bu}^\nn\right\}_n\\
&\sim_{\textup{GLT}}\left[\begin{array}{cc}\tilde\kappa & O\\ O & O\end{array}\right]_{t\times t}=E_{\tilde r,\tilde s}^t(\tilde\kappa).
\end{align*}
Thus, $\{\tilde A_\nn\}_n\sim_{\textup{GLT}}\tilde\kappa$ by Definition~\ref{rGLT}.
\end{proof}

\begin{theorem}\label{GLT6plus2}
Let $\{A_\nn=[a_{\ii\jj}^{(\nn)}]_{\ii,\jj=\bu}^\nn\}_n$ be a $d$-level $(r,s)$-block matrix-sequence and let
$\kappa:[0,1]^d\times[-\pi,\pi]^d\to\mathbb C^{r\times s}$ be measurable.
For $i=1,\ldots,r$ and $j=1,\ldots,s$, let $A_{\nn,ij}=[(a_{\ii\jj}^{(\nn)})_{ij}]_{\ii,\jj=\bu}^\nn$ be the submatrix of $A_\nn$ obtained by restricting each $r\times s$ block $a_{\ii\jj}^{(\nn)}$ to the $(i,j)$-entry $(a_{\ii\jj}^{(\nn)})_{ij}$. Then, 
\begin{align*}
&\{A_\nn\}_n\sim_{\textup{GLT}}\kappa\ \iff\ \{A_{\nn,ij}\}_n\sim_{\textup{GLT}}\kappa_{ij}\mbox{ \,for all \,}i=1,\ldots,r\mbox{ \,and \,}j=1,\ldots,s.
\end{align*}
\end{theorem}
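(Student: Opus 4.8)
The plan is to prove the two implications separately, in each case reducing the rectangular claim to something already available. The forward implication is immediate from Theorem~\ref{GLT6plus1}: applying it with $\tilde r=\tilde s=1$, i.e.\ selecting the single row $i$ and the single column $j$ of each block, the restricted matrix-sequence is precisely $\{A_{\nn,ij}\}_n$ and the associated $1\times1$ submatrix of $\kappa$ is precisely the scalar function $\kappa_{ij}$, whence $\{A_{\nn,ij}\}_n\sim_{\textup{GLT}}\kappa_{ij}$ for every $i=1,\ldots,r$ and $j=1,\ldots,s$.

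For the converse, the idea is to reassemble $A_\nn$ and $\kappa$ from their scalar slices and lift each slice through the rectangular GLT algebra of Theorem~\ref{GLT4-parte1}, so that no direct appeal to the extension operator is needed. First I would record the elementary decompositions
\[ A_\nn=\sum_{i=1}^r\sum_{j=1}^s A_{\nn,ij}\otimes E_{ij}^{(r,s)},\qquad \kappa=\sum_{i=1}^r\sum_{j=1}^s\kappa_{ij}\,E_{ij}^{(r,s)}, \]
which follow by writing each block as $a_{\ii\jj}^{(\nn)}=\sum_{i,j}(a_{\ii\jj}^{(\nn)})_{ij}E_{ij}^{(r,s)}$, using the bilinearity \eqref{tensor_bilin} of the tensor product, together with the analogous pointwise identity for the matrix-valued function $\kappa$.

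Next, for fixed $i,j$, I would use the factorization
\[ A_{\nn,ij}\otimes E_{ij}^{(r,s)}=\bigl(I_{N(\nn)}\otimes\ee_i^{(r)}\bigr)A_{\nn,ij}\bigl(I_{N(\nn)}\otimes(\ee_j^{(s)})^T\bigr)=D_\nn(\ee_i^{(r)})\,A_{\nn,ij}\,D_\nn((\ee_j^{(s)})^T), \]
which is a consequence of \eqref{tensorM} and of $E_{ij}^{(r,s)}=\ee_i^{(r)}(\ee_j^{(s)})^T$; here $D_\nn(\ee_i^{(r)})$ and $D_\nn((\ee_j^{(s)})^T)$ are the diagonal sampling matrices generated by the (constant, hence continuous) functions $\ee_i^{(r)}$ and $(\ee_j^{(s)})^T$, so the rectangular version of {\bf GLT\,3} in Section~\ref{GLT3} gives $\{D_\nn(\ee_i^{(r)})\}_n\sim_{\textup{GLT}}\ee_i^{(r)}$ and $\{D_\nn((\ee_j^{(s)})^T)\}_n\sim_{\textup{GLT}}(\ee_j^{(s)})^T$. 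Combining these with the hypothesis $\{A_{\nn,ij}\}_n\sim_{\textup{GLT}}\kappa_{ij}$ via the product rule of Theorem~\ref{GLT4-parte1} (used twice) yields $\{A_{\nn,ij}\otimes E_{ij}^{(r,s)}\}_n\sim_{\textup{GLT}}\ee_i^{(r)}\kappa_{ij}(\ee_j^{(s)})^T=\kappa_{ij}E_{ij}^{(r,s)}$. Summing over the $rs$ pairs $(i,j)$ and using the linearity part of Theorem~\ref{GLT4-parte1} then gives $\{A_\nn\}_n\sim_{\textup{GLT}}\sum_{i,j}\kappa_{ij}E_{ij}^{(r,s)}=\kappa$.

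I do not expect a genuine obstacle: the converse is purely algebraic and every step is licensed by Theorem~\ref{GLT4-parte1} and the rectangular {\bf GLT\,3}. The only points requiring care are bookkeeping ones, namely that the sizes and multipliability constraints in the triple product $D_\nn(\ee_i^{(r)})A_{\nn,ij}D_\nn((\ee_j^{(s)})^T)$ are met (the matrix factors having sizes $N(\nn)r\times N(\nn)$, $N(\nn)\times N(\nn)$, $N(\nn)\times N(\nn)s$, and the symbols sizes $r\times1$, $1\times1$, $1\times s$), and that the tensor factors in the two displayed decompositions are written in the order consistent with the conventions of Section~\ref{eo}. An alternative route would pass through the extension operator and {\bf GLT\,6}, but it is heavier, so the algebraic argument above seems preferable.
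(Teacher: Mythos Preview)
Your proof is correct, and it takes a genuinely different route from the paper for the converse implication. The paper works inside the extended $t\times t$ world: it applies $E_1^t$ to each scalar slice $A_{\nn,ij}$, conjugates by square diagonal sampling matrices $D_\nn(\beta_i),D_\nn(\beta_j)$ (with $\beta_i,\beta_j$ permutation matrices that move the $(1,1)$ entry to position $(i,j)$), sums to obtain $\{E_{r,s}^t(A_\nn)\}_n\sim_{\textup{GLT}}E_{r,s}^t(\kappa)$, and then returns to the rectangular statement via Definition~\ref{rGLT}. Your argument instead stays entirely in the rectangular setting, sandwiching the scalar slice between the genuinely rectangular diagonal sampling matrices $D_\nn(\ee_i^{(r)})$ and $D_\nn((\ee_j^{(s)})^T)$ and invoking the rectangular product rule of Theorem~\ref{GLT4-parte1} directly. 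This is cleaner and shorter: once the rectangular \underline{\textbf{GLT\,3}} and Theorem~\ref{GLT4-parte1} are available, there is no need to unwind the definition. The paper's route, on the other hand, makes the mechanism of Definition~\ref{rGLT} visible at each step, which has some expository value in a section whose purpose is to transfer square results to the rectangular case via the extension operator.
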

\begin{proof}
($\implies$) This implication follows immediately from Theorem~\ref{GLT6plus1}.

($\impliedby$) Let $t\ge r\vee s$ and fix $(i,j)$ with $1\le i\le r$ and $1\le j\le s$. From the hypothesis $\{A_{\nn,ij}\}_n\sim_{\textup{GLT}}\kappa_{ij}$ and Definition~\ref{rGLT}, we have
\begin{align}\label{GLTij}
\{E_1^t(A_{\nn,ij})\}_n&=\{[E_1^t((a_{\ii\jj}^{(\nn)})_{ij})]_{\ii,\jj=\bu}^\nn\}_n=\left\{\left[\left[\begin{array}{cc}(a_{\ii\jj}^{(\nn)})_{ij} & O\\[3pt] O & O\end{array}\right]_{t\times t}\right]_{\ii,\jj=\bu}^\nn\right\}_n\\
\notag&\sim_{\textup{GLT}}E_1^t(\kappa_{ij})=\left[\begin{array}{cc}\kappa_{ij} & O\\[3pt] O & O\end{array}\right]_{t\times t}.
\end{align}
Let $\beta_i$ and $\beta_j$ be the permutation matrices that move the $(1,1)$ entry of a $t\times t$ matrix in position $(i,j)$, i.e.,
\begin{align*}
\beta_i&=\textup{permutation matrix of size }t\textup{ that swaps the rows }1\textup{ and }i,\\
\beta_j&=\textup{permutation matrix of size }t\textup{ that swaps the columns }1\textup{ and }j.
\end{align*}
By definition, we have
\begin{align*}
\beta_iE_1^t((a_{\ii\jj}^{(\nn)})_{ij})\beta_j&=\beta_i\left[\begin{array}{cc}(a_{\ii\jj}^{(\nn)})_{ij} & O\\[3pt] O & O\end{array}\right]_{t\times t}\beta_j
=(a_{\ii\jj}^{(\nn)})_{ij}E_{ij}^{(t)},\quad\ii,\jj=\bu,\ldots,\nn,\\
\beta_iE_1^t(\kappa_{ij})\beta_j&=\beta_i\left[\begin{array}{cc}\kappa_{ij} & O\\[3pt] O & O\end{array}\right]_{t\times t}\beta_j=\kappa_{ij}E_{ij}^{(t)}.
\end{align*}
Since $\{D_\nn(\beta_i)\}_n\sim_{\textup{GLT}}\beta_i$ and $\{D_\nn(\beta_j)\}_n\sim_{\textup{GLT}}\beta_j$, \eqref{GLTij} and Theorem~\ref{GLT4-parte1} yield
\begin{align*}
\{[(a_{\ii\jj}^{(\nn)})_{ij}E_{ij}^{(t)}]_{\ii,\jj=\bu}^\nn\}_n&=\{D_\nn(\beta_i)E_1^t(A_{\nn,ij})D_\nn(\beta_j)\}_n\sim_{\textup{GLT}}\beta_iE_1^t(\kappa_{ij})\beta_j=\kappa_{ij}E_{ij}^{(t)}.
\end{align*}
If we now sum over all $i=1,\ldots,r$ and $j=1,\ldots,s$, by the previous relation and Theorem~\ref{GLT4-parte1} we obtain
\[ \{E_{r,s}^t(A_\nn)\}_n=\left\{\left[\left[\begin{array}{cc}a_{\ii\jj}^{(\nn)} & O\\[3pt] O & O\end{array}\right]_{t\times t}\right]_{\ii,\jj=\bu}^\nn\right\}_n\sim_{\textup{GLT}}\left[\begin{array}{cc}\kappa & O\\ O & O\end{array}\right]_{t\times t}=E_{r,s}^t(\kappa). \]
We conclude that $\{A_\nn\}_n\sim_{\textup{GLT}}\kappa$ by Definition~\ref{rGLT}.
\end{proof}

\begin{theorem}\label{GLT6plus3}
For $i=1,\ldots,\varrho$ and $j=1,\ldots,\varsigma$, let $\{A_{\nn,ij}=[a_{\ii\jj,ij}^{(\nn)}]_{\ii,\jj=\bu}^\nn\}_n$ be a $d$-level $(r_i,s_j)$-block matrix-sequence and let $\kappa_{ij}:[0,1]^d\times[-\pi,\pi]^d\to\mathbb C^{r_i\times s_j}$ be measurable. Define the $(r,s)$-block matrix $A_\nn=[\,[a_{\ii\jj,ij}^{(\nn)}]_{i=1,\ldots,\varrho}^{j=1,\ldots,\varsigma}\,]_{\ii,\jj=\bu}^\nn$ and the $r\times s$ matrix-valued function $\kappa=[\kappa_{ij}]_{i=1,\ldots,\varrho}^{j=1,\ldots,\varsigma}$, where $r=\sum_{i=1}^\varrho r_i$ and $s=\sum_{j=1}^\varsigma s_j$.
Then,
\begin{equation}\label{glt6iff}
\{A_\nn\}_n\sim_{\textup{GLT}}\kappa\ \iff\ \{A_{\nn,ij}\}_n\sim_{\textup{GLT}}\kappa_{ij}\mbox{ \,for all \,}i=1,\ldots,\varrho\mbox{ \,and \,}j=1,\ldots,\varsigma.
\end{equation}
Moreover, if $B_\nn=[A_{\nn,ij}]_{i=1,\ldots,\varrho}^{j=1,\ldots,\varsigma}$ then
\begin{equation}\label{Prel}
\Bigl(P_{r,N(\nn)}\mathop{\textup{diag}}_{i=1,\ldots,\varrho}P_{r_i,N(\nn)}^T\Bigr)\,B_\nn\,\Bigl(P_{s,N(\nn)}\mathop{\textup{diag}}_{j=1,\ldots,\varsigma}P_{s_j,N(\nn)}^T\Bigr)^T=A_\nn,
\end{equation}
where $P_{k_1,k_2}$ is defined in \eqref{Pk1k2}.
\end{theorem}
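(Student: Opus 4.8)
The plan is to treat the two assertions of the theorem independently: the equivalence \eqref{glt6iff} will follow immediately from Theorems~\ref{GLT6plus1} and~\ref{GLT6plus2}, whereas the matrix identity \eqref{Prel} is a self-contained computation with tensor products and permutation matrices that uses no GLT machinery at all.

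For \eqref{glt6iff}, I would argue via scalar entries. Each entry position $(a,b)$ of an $r\times s$ block of $A_\nn$ corresponds bijectively, through the splittings $r=\sum_{i=1}^\varrho r_i$ and $s=\sum_{j=1}^\varsigma s_j$, to a pair consisting of a block position $(i,j)$ and a within-block position $(\alpha,\beta)$; under this correspondence the scalar entry-sequence extracted from $\{A_\nn\}_n$ at position $(a,b)$ is exactly the one extracted from $\{A_{\nn,ij}\}_n$ at position $(\alpha,\beta)$, and the scalar function $\kappa_{ab}$ coincides with $(\kappa_{ij})_{\alpha\beta}$. Applying Theorem~\ref{GLT6plus2} to $\{A_\nn\}_n$ shows that $\{A_\nn\}_n\sim_{\textup{GLT}}\kappa$ is equivalent to the statement that every entry-sequence of $\{A_\nn\}_n$ is a scalar GLT sequence with the corresponding symbol; applying Theorem~\ref{GLT6plus2} to each $\{A_{\nn,ij}\}_n$ shows that $\{A_{\nn,ij}\}_n\sim_{\textup{GLT}}\kappa_{ij}$ for all $i,j$ is equivalent to the same condition after the above reindexing. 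Hence the two sides of \eqref{glt6iff} coincide. (Alternatively, the forward implication is immediate from Theorem~\ref{GLT6plus1} alone, restricting each $r\times s$ block to its $(i,j)$ sub-block, and the converse follows by combining Theorem~\ref{GLT6plus1} applied to the sequences $\{A_{\nn,ij}\}_n$ with Theorem~\ref{GLT6plus2} applied to $\{A_\nn\}_n$.)

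For \eqref{Prel}, I would run a chain of rewrites, invoking \eqref{tensorP} at each step. Starting from $A_{\nn,ij}=\sum_{\ii,\jj=\bu}^\nn E_{\ii\jj}^{(\nn)}\otimes a_{\ii\jj,ij}^{(\nn)}$ and using \eqref{tensorP} to move the factor $E_{\ii\jj}^{(\nn)}$ past $a_{\ii\jj,ij}^{(\nn)}$, one gets $P_{r_i,N(\nn)}^T A_{\nn,ij}\,P_{s_j,N(\nn)}=\sum_{\ii,\jj=\bu}^\nn a_{\ii\jj,ij}^{(\nn)}\otimes E_{\ii\jj}^{(\nn)}$. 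Since conjugating the $\varrho\times\varsigma$ block matrix $B_\nn=[A_{\nn,ij}]$ by $\mathop{\textup{diag}}_{i}P_{r_i,N(\nn)}^T$ on the left and $\mathop{\textup{diag}}_{j}P_{s_j,N(\nn)}$ on the right acts block-by-block, this gives $\bigl(\mathop{\textup{diag}}_{i}P_{r_i,N(\nn)}^T\bigr)B_\nn\bigl(\mathop{\textup{diag}}_{j}P_{s_j,N(\nn)}\bigr)=\sum_{\ii,\jj=\bu}^\nn\bigl[a_{\ii\jj,ij}^{(\nn)}\otimes E_{\ii\jj}^{(\nn)}\bigr]_{i,j}$. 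The key elementary observation is then that, for fixed $\ii,\jj$, the $\varrho\times\varsigma$ block matrix $\bigl[a_{\ii\jj,ij}^{(\nn)}\otimes E_{\ii\jj}^{(\nn)}\bigr]_{i,j}$ equals $a_{\ii\jj}^{(\nn)}\otimes E_{\ii\jj}^{(\nn)}$ — valid even though the $r_i$ (and $s_j$) need not be equal, because grouping the rows and columns of $a_{\ii\jj}^{(\nn)}\otimes E_{\ii\jj}^{(\nn)}$ according to $r=\sum_i r_i$ and $s=\sum_j s_j$ produces exactly the blocks $a_{\ii\jj,ij}^{(\nn)}\otimes E_{\ii\jj}^{(\nn)}$. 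Summing over $\ii,\jj$ then yields $\bigl(\mathop{\textup{diag}}_{i}P_{r_i,N(\nn)}^T\bigr)B_\nn\bigl(\mathop{\textup{diag}}_{j}P_{s_j,N(\nn)}\bigr)=\sum_{\ii,\jj=\bu}^\nn a_{\ii\jj}^{(\nn)}\otimes E_{\ii\jj}^{(\nn)}$, and a final application of \eqref{tensorP} — multiplying on the left by $P_{r,N(\nn)}$ and on the right by $P_{s,N(\nn)}^T$ — turns each summand into $E_{\ii\jj}^{(\nn)}\otimes a_{\ii\jj}^{(\nn)}$, so the product equals $\sum_{\ii,\jj=\bu}^\nn E_{\ii\jj}^{(\nn)}\otimes a_{\ii\jj}^{(\nn)}=A_\nn$ by \eqref{dlev_rsblock}. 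Rewriting $(\mathop{\textup{diag}}_{j}P_{s_j,N(\nn)})\,P_{s,N(\nn)}^T=\bigl(P_{s,N(\nn)}\mathop{\textup{diag}}_{j}P_{s_j,N(\nn)}^T\bigr)^T$ puts the left-hand side in the form stated in \eqref{Prel}.

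The part requiring genuine care is \eqref{Prel}: one must track precisely, at each use of \eqref{tensorP}, which permutation $P_{k_1,k_2}$ and which transpose enters, and must verify the block-Kronecker identity $\bigl[a_{\ii\jj,ij}^{(\nn)}\otimes E_{\ii\jj}^{(\nn)}\bigr]_{i,j}=a_{\ii\jj}^{(\nn)}\otimes E_{\ii\jj}^{(\nn)}$ in the heterogeneous-size case; everything else is routine. The equivalence \eqref{glt6iff} presents no obstacle beyond the reindexing of entry positions, and — consistently with the earlier remark — the argument uses only singular-value-level facts about GLT sequences, so no spectral or Hermitian hypotheses are needed.
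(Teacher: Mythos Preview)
Your argument is correct. For the equivalence \eqref{glt6iff} you do exactly what the paper does: the forward implication is Theorem~\ref{GLT6plus1}, and the converse first breaks each $\{A_{\nn,ij}\}_n$ into its scalar entry-sequences via Theorem~\ref{GLT6plus1} and then reassembles $\{A_\nn\}_n$ from all of them via Theorem~\ref{GLT6plus2}.

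For the permutation identity \eqref{Prel} your route is genuinely different from the paper's. The paper establishes a general block-decomposition formula
\[
[A_{ij}]_{i,j}=\sum_{i,j}\sum_{p=1}^{r_i}\sum_{q=1}^{s_j}\ee_{p+r_1+\ldots+r_{i-1}}^{(r)}(\ee_p^{(r_i)})^TA_{ij}\,\ee_q^{(s_j)}(\ee_{q+s_1+\ldots+s_{j-1}}^{(s)})^T
\]
and pushes the whole computation through at the level of canonical basis vectors, with explicit changes of index variables and repeated use of \eqref{tensorT}--\eqref{tensorM} before \eqref{tensorP} is applied at the very end. You instead short-circuit all of this with the single structural observation
\[
\bigl[a_{\ii\jj,ij}^{(\nn)}\otimes E_{\ii\jj}^{(\nn)}\bigr]_{i=1,\ldots,\varrho}^{j=1,\ldots,\varsigma}=\bigl[a_{\ii\jj,ij}^{(\nn)}\bigr]_{i,j}\otimes E_{\ii\jj}^{(\nn)},
\]
which is exactly the statement that a Kronecker product $M\otimes E$ can be partitioned into Kronecker blocks $M_{ij}\otimes E$ along any row/column grouping of $M$; this holds for heterogeneous block sizes because the definition \eqref{tp:def} already writes $M\otimes E$ as an $r\times s$ array of $N(\nn)\times N(\nn)$ blocks $M_{pq}E$. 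Your approach is shorter and more conceptual; the paper's has the virtue of being fully explicit at the index level, and its formula \eqref{block.dec} may be of independent use. Either way the result is the same, and your warning about tracking the correct $P_{k_1,k_2}$ versus its transpose at each invocation of \eqref{tensorP} is well placed.
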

\begin{proof}
We first prove the equivalence in \eqref{glt6iff}.

($\implies$) This implication follows immediately from Theorem~\ref{GLT6plus1}.

($\impliedby$) Let $A_{\nn,ij,\ell k}=[(a_{\ii\jj,ij}^{(\nn)})_{\ell k}]_{\ii,\jj=\bu}^\nn$.
By Theorem~\ref{GLT6plus1}, from the hypothesis $\{A_{\nn,ij}=[a_{\ii\jj,ij}^{(\nn)}]_{\ii,\jj=\bu}^\nn\}_n\sim_{\textup{GLT}}\kappa_{ij}$ we infer that
\[ \{A_{\nn,ij,\ell k}\}_n\sim_{\textup{GLT}}(\kappa_{ij})_{\ell k}. \]
Hence, the thesis $\{A_\nn\}_n\sim_{\textup{GLT}}\kappa$ follows from Theorem~\ref{GLT6plus2}.

We now prove \eqref{Prel}. We first note the following: if $[A_{ij}]_{i=1,\ldots,\varrho}^{j=1,\ldots,\varsigma}$ is a block matrix with $A_{ij}$ of size $r_i\times s_j$ and if we define $r=\sum_{i=1}^\varrho r_i$ and $s=\sum_{j=1}^\varsigma s_j$, then
\begin{align*}
&\sum_{p=1}^{r_i}\sum_{q=1}^{s_j}\ee_{p+r_1+\ldots+r_{i-1}}^{(r)}(\ee_p^{(r_i)})^TA_{ij}\ee_q^{(s_j)}(\ee_{q+s_1+\ldots+s_{j-1}}^{(s)})^T\\
&=\sum_{p=1}^{r_i}\sum_{q=1}^{s_j}\ee_{p+r_1+\ldots+r_{i-1}}^{(r)}(A_{ij})_{pq}(\ee_{q+s_1+\ldots+s_{j-1}}^{(s)})^T\\
&=\sum_{p=1}^{r_i}\sum_{q=1}^{s_j}(A_{ij})_{pq}E_{p+r_1+\ldots+r_{i-1},q+s_1+\ldots+s_{j-1}}^{(r,s)}\\
&=\begin{array}{c|c|c|c|c|c|}
\multicolumn{1}{c}{} & \multicolumn{1}{c}{s_1} & \multicolumn{1}{c}{\cdots} & \multicolumn{1}{c}{s_{j_{\vphantom{\sum_{\sum}}}}} & \multicolumn{1}{c}{\cdots} & \multicolumn{1}{c}{s_\varsigma}\\
\cline{2-6}
r_1\vphantom{\vdots_{\sum}} & O & O & O & O & O\\
\cline{2-6}
\vdots\vphantom{\vdots_{\sum}} & O & O & O & O & O\\
\cline{2-6}
r_i\vphantom{\vdots_{\sum}} & O & O & A_{ij_{\vphantom{\sum_{\sum}}}} & O & O\\
\cline{2-6}
\vdots\vphantom{\vdots_{\sum}} & O & O & O & O & O\\
\cline{2-6}
r_\varrho\vphantom{\vdots_{\sum}} & O & O & O & O & O\\
\cline{2-6}
\end{array}
\end{align*}
and
\begin{equation}\label{block.dec}	[A_{ij}]_{i=1,\ldots,\varrho}^{j=1,\ldots,\varsigma}=\sum_{i=1}^\varrho\sum_{j=1}^\varsigma\sum_{p=1}^{r_i}\sum_{q=1}^{s_j}\ee_{p+r_1+\ldots+r_{i-1}}^{(r)}(\ee_p^{(r_i)})^TA_{ij}\ee_q^{(s_j)}(\ee_{q+s_1+\ldots+s_{j-1}}^{(s)})^T.
\end{equation}
Let
\begin{align}\label{Bnij}
B_{\nn,ij}&=P_{r_i,N(\nn)}^TA_{\nn,ij}P_{s_j,N(\nn)}=P_{r_i,N(\nn)}^T[a_{\ii\jj,ij}^{(\nn)}]_{\ii,\jj=\bu}^\nn P_{s_j,N(\nn)}\\
&=P_{r_i,N(\nn)}^T\Biggl(\sum_{\ii,\jj=\bu}^\nn E_{\ii\jj}^{(\nn)}\otimes a_{\ii\jj,ij}^{(\nn)}\Biggr)P_{s_j,N(\nn)}\notag\\
&=\sum_{\ii,\jj=\bu}^\nn P_{r_i,N(\nn)}^T(E_{\ii\jj}^{(\nn)}\otimes a_{\ii\jj,ij}^{(\nn)})P_{s_j,N(\nn)}=\sum_{\ii,\jj=\bu}^\nn a_{\ii\jj,ij}^{(\nn)}\otimes E_{\ii\jj}^{(\nn)},\notag
\end{align}
where the last equality follows from \eqref{tensorP}. 
By \eqref{block.dec}, 
{\allowdisplaybreaks\begin{align*}
&\biggl(\mathop{\textup{diag}}_{i=1,\ldots,\varrho}P_{r_i,N(\nn)}^T\biggr)\,B_\nn\,\biggl(\mathop{\textup{diag}}_{j=1,\ldots,\varsigma}P_{s_j,N(\nn)}\biggr)\\*
&=\biggl(\mathop{\textup{diag}}_{i=1,\ldots,\varrho}P_{r_i,N(\nn)}^T\biggr)\,[A_{\nn,ij}]_{i=1,\ldots,\varrho}^{j=1,\ldots,\varsigma}\,\biggl(\mathop{\textup{diag}}_{j=1,\ldots,\varsigma}P_{s_j,N(\nn)}\biggr)\notag\\
&=[P_{r_i,N(\nn)}^TA_{\nn,ij}P_{s_j,N(\nn)}]_{i=1,\ldots,\varrho}^{j=1,\ldots,\varsigma}=[B_{\nn,ij}]_{i=1,\ldots,\varrho}^{j=1,\ldots,\varsigma}\notag\\
&=\sum_{i=1}^\varrho\sum_{j=1}^\varsigma\sum_{p'=1}^{N(\nn)r_i}\sum_{q'=1}^{N(\nn)s_j}\ee_{p'+N(\nn)r_1+\ldots+N(\nn)r_{i-1}}^{(N(\nn)r)}(\ee_{p'}^{(N(\nn)r_i)})^TB_{\nn,ij}\notag\\*
&\hphantom{{}=\sum_{i=1}^\varrho\sum_{j=1}^\varsigma\sum_{p'=1}^{N(\nn)r_i}\sum_{q'=1}^{N(\nn)s_j}}\cdot\ee_{q'}^{(N(\nn)s_j)}(\ee_{q'+N(\nn)s_1+\ldots+N(\nn)s_{j-1}}^{(N(\nn)s)})^T\notag\\
&=\sum_{i=1}^\varrho\sum_{j=1}^\varsigma\sum_{p=1}^{r_i}\sum_{u=1}^{N(\nn)}\sum_{q=1}^{s_j}\sum_{v=1}^{N(\nn)}\ee_{u+N(\nn)(p-1)+N(\nn)r_1+\ldots+N(\nn)r_{i-1}}^{(N(\nn)r)}(\ee_{u+N(\nn)(p-1)}^{(N(\nn)r_i)})^TB_{\nn,ij}\notag\\*
&\hphantom{{}=\sum_{i=1}^\varrho\sum_{j=1}^\varsigma\sum_{p=1}^{r_i}\sum_{u=1}^{N(\nn)}\sum_{q=1}^{s_j}\sum_{v=1}^{N(\nn)}}\cdot\ee_{v+N(\nn)(q-1)}^{(N(\nn)s_j)}(\ee_{v+N(\nn)(q-1)+N(\nn)s_1+\ldots+N(\nn)s_{j-1}}^{(N(\nn)s)})^T,
\end{align*}}%
where in the last equality we have used the changes of variable $p'=u+N(\nn)(p-1)$ and $q'=v+N(\nn)(q-1)$. Note that
\begin{align*}
\ee_{u+N(\nn)(p-1)+N(\nn)r_1+\ldots+N(\nn)r_{i-1}}^{(N(\nn)r)}&=\ee_{p+r_1+\ldots+r_{i-1}}^{(r)}\otimes\ee_u^{(N(\nn))},\\
\ee_{u+N(\nn)(p-1)}^{(N(\nn)r_i)}&=\ee_p^{(r_i)}\otimes\ee_u^{(N(\nn))},\\
\ee_{v+N(\nn)(q-1)+N(\nn)s_1+\ldots+N(\nn)s_{j-1}}^{(N(\nn)s)}&=\ee_{q+s_1+\ldots+s_{j-1}}^{(s)}\otimes\ee_v^{(N(\nn))},\\
\ee_{v+N(\nn)(q-1)}^{(N(\nn)s_j)}&=\ee_q^{(s_j)}\otimes\ee_v^{(N(\nn))},
\end{align*}
and
\begin{align*}
\sum_{u=1}^{N(\nn)}\ee_u^{(N(\nn))}(\ee_u^{(N(\nn))})^T=\sum_{v=1}^{N(\nn)}\ee_v^{(N(\nn))}(\ee_v^{(N(\nn))})^T=I_{N(\nn)}.
\end{align*}
Hence, by the properties \eqref{tensorT}--\eqref{tensorM} of tensor products, 
{\allowdisplaybreaks\begin{align*}
&\biggl(\mathop{\textup{diag}}_{i=1,\ldots,\varrho}P_{r_i,N(\nn)}^T\biggr)\,B_\nn\,\biggl(\mathop{\textup{diag}}_{j=1,\ldots,\varsigma}P_{s_j,N(\nn)}\biggr)\\*
&=\sum_{i=1}^\varrho\sum_{j=1}^\varsigma\sum_{p=1}^{r_i}\sum_{u=1}^{N(\nn)}\sum_{q=1}^{s_j}\sum_{v=1}^{N(\nn)}(\ee_{p+r_1+\ldots+r_{i-1}}^{(r)}\otimes\ee_u^{(N(\nn))})(\ee_p^{(r_i)}\otimes\ee_u^{(N(\nn))})^TB_{\nn,ij}\notag\\*
&\hphantom{{}=\sum_{i=1}^\varrho\sum_{j=1}^\varsigma\sum_{p=1}^{r_i}\sum_{u=1}^{N(\nn)}\sum_{q=1}^{s_j}\sum_{v=1}^{N(\nn)}}\cdot(\ee_q^{(s_j)}\otimes\ee_v^{(N(\nn))})(\ee_{q+s_1+\ldots+s_{j-1}}^{(s)}\otimes\ee_v^{(N(\nn))})^T\\
&=\sum_{i=1}^\varrho\sum_{j=1}^\varsigma\sum_{p=1}^{r_i}\sum_{u=1}^{N(\nn)}\sum_{q=1}^{s_j}\sum_{v=1}^{N(\nn)}(\ee_{p+r_1+\ldots+r_{i-1}}^{(r)}(\ee_p^{(r_i)})^T\otimes\ee_u^{(N(\nn))}(\ee_u^{(N(\nn))})^T)B_{\nn,ij}\notag\\*
&\hphantom{{}=\sum_{i=1}^\varrho\sum_{j=1}^\varsigma\sum_{p=1}^{r_i}\sum_{u=1}^{N(\nn)}\sum_{q=1}^{s_j}\sum_{v=1}^{N(\nn)}}\cdot(\ee_q^{(s_j)}(\ee_{q+s_1+\ldots+s_{j-1}}^{(s)})^T\otimes\ee_v^{(N(\nn))}(\ee_v^{(N(\nn))})^T)\\
&=\sum_{i=1}^\varrho\sum_{j=1}^\varsigma\sum_{p=1}^{r_i}\sum_{q=1}^{s_j}(\ee_{p+r_1+\ldots+r_{i-1}}^{(r)}(\ee_p^{(r_i)})^T\otimes I_{N(\nn)})B_{\nn,ij}\\*
&\hphantom{{}=\sum_{i=1}^\varrho\sum_{j=1}^\varsigma\sum_{p=1}^{r_i}\sum_{q=1}^{s_j}}\cdot(\ee_q^{(s_j)}(\ee_{q+s_1+\ldots+s_{j-1}}^{(s)})^T\otimes I_{N(\nn)}).
\end{align*}}%
Using \eqref{tensorP}, \eqref{block.dec}, and the expression \eqref{Bnij} for $B_{\nn,ij}$, we finally obtain
{\allowdisplaybreaks\begin{align*}
&P_{r,N(\nn)}\biggl(\mathop{\textup{diag}}_{i=1,\ldots,\varrho}P_{r_i,N(\nn)}^T\biggr)\,B_\nn\,\biggl(\mathop{\textup{diag}}_{j=1,\ldots,\varsigma}P_{s_j,N(\nn)}\biggr)P_{s,N(\nn)}^T\\*
&=P_{r,N(\nn)}\Biggl[\sum_{i=1}^\varrho\sum_{j=1}^\varsigma\sum_{p=1}^{r_i}\sum_{q=1}^{s_j}(\ee_{p+r_1+\ldots+r_{i-1}}^{(r)}(\ee_p^{(r_i)})^T\otimes I_{N(\nn)})\Biggl(\sum_{\ii,\jj=\bu}^\nn a_{\ii\jj,ij}^{(\nn)}\otimes E_{\ii\jj}^{(\nn)}\Biggr)\\*
&\hphantom{{}=P_{r,N(\nn)}\Biggl[\sum_{i=1}^\varrho\sum_{j=1}^\varsigma\sum_{p=1}^{r_i}\sum_{q=1}^{s_j}}\cdot(\ee_q^{(s_j)}(\ee_{q+s_1+\ldots+s_{j-1}}^{(s)})^T\otimes I_{N(\nn)})\Biggr]P_{s,N(\nn)}^T\\
&=P_{r,N(\nn)}\Biggl[\sum_{\ii,\jj=\bu}^\nn\sum_{i=1}^\varrho\sum_{j=1}^\varsigma\sum_{p=1}^{r_i}\sum_{q=1}^{s_j}\ee_{p+r_1+\ldots+r_{i-1}}^{(r)}(\ee_p^{(r_i)})^Ta_{\ii\jj,ij}^{(\nn)}\ee_q^{(s_j)}(\ee_{q+s_1+\ldots+s_{j-1}}^{(s)})^T\\*
&\hphantom{{}=P_{r,N(\nn)}\Biggl[\sum_{\ii,\jj=\bu}^\nn\sum_{i=1}^\varrho\sum_{j=1}^\varsigma\sum_{p=1}^{r_i}\sum_{q=1}^{s_j}}\otimes E_{\ii\jj}^{(\nn)})\Biggr]P_{s,N(\nn)}^T\\
&=P_{r,N(\nn)}\Biggl[\sum_{\ii,\jj=\bu}^\nn[a_{\ii\jj,ij}^{(\nn)}]_{i=1,\ldots,\varrho}^{j=1,\ldots,\varsigma}\otimes E_{\ii\jj}^{(\nn)}\Biggr]P_{s,N(\nn)}^T=\sum_{\ii,\jj=\bu}^\nn E_{\ii\jj}^{(\nn)}\otimes[a_{\ii\jj,ij}^{(\nn)}]_{i=1,\ldots,\varrho}^{j=1,\ldots,\varsigma}\\
&=[\,[a_{\ii\jj,ij}^{(\nn)}]_{i=1,\ldots,\varrho}^{j=1,\ldots,\varsigma}\,]_{\ii,\jj=\bu}^\nn=A_\nn,
\end{align*}}%
which proves the thesis \eqref{Prel}.
\end{proof}

\subsection{Existence of a rectangular GLT sequence for any measurable function}
The next theorem proves the analog of the second part of {\bf GLT\,0} for rectangular GLT sequences.

\begin{theorem}\label{GLT0-parte2}
Let $\{\nn=\nn(n)\}_n$ be a sequence of $d$-indices such that $\nn\to\infty$ as $n\to\infty$ and let $\kappa:[0,1]^d\times[-\pi,\pi]^d\to\mathbb C^{r\times s}$ be measurable. Then, there exists a $d$-level $(r,s)$-block GLT sequence $\{A_\nn\}_n\sim_{\textup{GLT}}\kappa$.
\end{theorem}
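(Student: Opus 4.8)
The plan is to reduce the claim to the already-proven square case via the extension operator, and then to recover a genuinely rectangular sequence by a block-restriction argument based on Theorem~\ref{GLT6plus1}.

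First I would set $t=r\vee s$ and note that the matrix-valued function $E_{r,s}^t(\kappa)\colon[0,1]^d\times[-\pi,\pi]^d\to\mathbb C^{t\times t}$ is measurable: by \eqref{Erst(x)}, each of its entries is either an entry of $\kappa$ (hence measurable by hypothesis) or identically $0$. Applying the second part of \textbf{GLT\,0} to the measurable symbol $E_{r,s}^t(\kappa)$ and to the prescribed sequence of $d$-indices $\{\nn=\nn(n)\}_n$, I obtain a $d$-level $t$-block GLT sequence $\{C_\nn\}_n\sim_{\textup{GLT}}E_{r,s}^t(\kappa)$, with $C_\nn$ of size $N(\nn)t\times N(\nn)t$ and level orders $\nn$. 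Writing $C_\nn=[c_{\ii\jj}^{(\nn)}]_{\ii,\jj=\bu}^\nn$ in $t\times t$ block form, I would then define $A_\nn=[a_{\ii\jj}^{(\nn)}]_{\ii,\jj=\bu}^\nn$ by letting $a_{\ii\jj}^{(\nn)}$ be the top-left $r\times s$ submatrix of $c_{\ii\jj}^{(\nn)}$; this is a $d$-level $(r,s)$-block matrix-sequence with the required level orders. By Theorem~\ref{GLT6plus1} (with $(\tilde r,\tilde s)=(r,s)$ and the top-left window), $\{A_\nn\}_n$ is a $d$-level $(r,s)$-block GLT sequence whose symbol is the corresponding top-left $r\times s$ submatrix of $E_{r,s}^t(\kappa)$; and that submatrix is exactly $\kappa$, by the definition \eqref{Erst(x)} of the extension operator. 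Hence $\{A_\nn\}_n\sim_{\textup{GLT}}\kappa$, which is the thesis.

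The only delicate point—and the reason I do not simply try to ``invert'' the extension operator—is that a generic square GLT sequence with symbol $E_{r,s}^t(\kappa)$ need not be of the form $\{E_{r,s}^t(A_\nn)\}_n$, since its off-corner blocks may be nonzero; this is what makes the naive approach fail, and what Theorem~\ref{GLT6plus1} is designed to fix, guaranteeing that discarding those blocks leaves a rectangular GLT sequence with the expected restricted symbol. An alternative, more self-contained route would be the constructive one: pick $\kappa_m\to\kappa$ a.e.\ with $\kappa_m(\xx,\btheta)=\sum_{i=1}^{N_m}a_{i,m}(\xx)f_{i,m}(\btheta)$ (Lemma~\ref{l2.4} applied entrywise, so $a_{i,m}\in C^\infty([0,1]^d)$ and $f_{i,m}$ a trigonometric monomial in $\{{\rm e}^{{\rm i}\jj\cdot\btheta}E_{\alpha\beta}^{(r,s)}\}$), set $B_{\nn,m}=\sum_{i=1}^{N_m}D_\nn(a_{i,m}I_r)T_\nn(f_{i,m})$ so that $\{B_{\nn,m}\}_n\sim_{\textup{GLT}}\kappa_m$ by the examples of Section~\ref{GLT3} together with Theorem~\ref{GLT4-parte1}, and then obtain $\{A_\nn\}_n$ by a diagonal extraction $A_\nn=B_{\nn,m(n)}$ and Theorem~\ref{GLT7}. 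This works too, but it needs an a.c.s.\ convergence/completeness step that the first route bypasses entirely, so I would present the extension-operator proof.
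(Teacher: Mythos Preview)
Your proof is correct but takes a genuinely different route from the paper's. The paper works entrywise: for each $(i,j)$ it invokes the scalar case of \textbf{GLT\,0} to produce $\{A_{\nn,ij}\}_n\sim_{\textup{GLT}}\kappa_{ij}$, forms the block matrix $B_\nn=[A_{\nn,ij}]_{i=1,\ldots,r}^{j=1,\ldots,s}$, and then appeals to Theorem~\ref{GLT6plus3} (the assembly theorem with the permutation matrices $P_{r,N(\nn)}$, $P_{s,N(\nn)}$) to conclude $\{P_{r,N(\nn)}B_\nn P_{s,N(\nn)}^T\}_n\sim_{\textup{GLT}}\kappa$. You instead go through the extension operator: apply square \textbf{GLT\,0} once at the $t$-block level to obtain $\{C_\nn\}_n\sim_{\textup{GLT}}E_{r,s}^t(\kappa)$, then use the restriction result Theorem~\ref{GLT6plus1} to cut back to the top-left $r\times s$ window. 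Your route is arguably cleaner---it needs only Theorem~\ref{GLT6plus1} and a single invocation of \textbf{GLT\,0}, and it avoids the permutation bookkeeping of Theorem~\ref{GLT6plus3}---while the paper's route has the virtue of building the sequence explicitly from scalar pieces, which makes the construction more transparent. Your remark about why one cannot simply ``invert'' $E_{r,s}^t$ on an arbitrary square GLT realization is well taken and pinpoints exactly why Theorem~\ref{GLT6plus1} is the right tool here.
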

\begin{proof}
By {\bf GLT\,0}, for every $i=1,\ldots,r$ and $j=1,\ldots,s$ there exists $\{A_{\nn,ij}\}_n\sim_{\textup{GLT}}\kappa_{ij}$.
We define $B_\nn=[A_{\nn,ij}]_{i=1,\ldots,r}^{j=1,\ldots,s}$ and we conclude that $\{P_{r,N(\nn)}B_\nn P_{s,N(\nn)}^T\}_n\sim_{\textup{GLT}}\kappa$ by Theorem~\ref{GLT6plus3}. 
\end{proof}

\section{Summary of the theory of rectangular GLT sequences}\label{summ}
We summarize in this section the theory of rectangular GLT sequences developed in Section~\ref{sec:GLT}. By comparing this section with Section~\ref{sec:sGLT}, we see that all properties of square GLT sequences generalize to rectangular GLT sequences as long as they do not involve spectral symbols or Hermitian matrices. We remark that property \underline{\bf GLT\,6} below is a stronger version of {\bf GLT\,6} and should therefore be considered not only as a generalization of {\bf GLT\,6} to rectangular GLT sequences but also as an addendum to the theory of square GLT sequences developed in~\cite{bgd}.

A $d$-level $(r,s)$-block GLT sequence $\{A_\nn\}_n$ is a special $d$-level $(r,s)$-block matrix-sequence equipped with a measurable function $\kappa:[0,1]^d\times[-\pi,\pi]^d\to\mathbb C^{r\times s}$, the so-called symbol (or kernel). In the properties listed below, unless otherwise specified, 
the notation $\{A_\nn\}_n\sim_{\textup{GLT}}\kappa$ means that $\{A_\nn\}_n$ is a $d$-level $(r,s)$-block GLT sequence with symbol $\kappa$. 
\begin{enumerate}[leftmargin=35pt]
	\item[\underline{\textbf{GLT\,0}}\textbf{.}] If $\{A_\nn\}_n\sim_{\textup{GLT}}\kappa$ then $\{A_\nn\}_n\sim_{\textup{GLT}}\xi$ if and only if $\kappa=\xi$ a.e.
	
	If $\kappa:[0,1]^d\times[-\pi,\pi]^d\to\mathbb C^{r\times s}$ is measurable and $\{\nn=\nn(n)\}_n$ is a sequence of $d$-indices such that $\nn\to\infty$ as $n\to\infty$ then there exists $\{A_\nn\}_n\sim_{\textup{GLT}}\kappa$.
	\item[\underline{\textbf{GLT\,1}}\textbf{.}] If $\{A_\nn\}_n\sim_{\textup{GLT}}\kappa$ then $\{A_\nn\}_n\sim_\sigma\kappa$.
	\item[\underline{\textbf{GLT\,3}}\textbf{.}] For every sequence of $d$-indices $\{\nn=\nn(n)\}_n$ such that $\nn\to\infty$ as $n\to\infty$,
	\begin{itemize}[leftmargin=*]
		\item $\{T_\nn(f)\}_n\sim_{\textup{GLT}}\kappa(\xx,\btheta)=f(\btheta)$ if $f:[-\pi,\pi]^d\to\mathbb C^{r\times s}$ is in $L^1([-\pi,\pi]^d)$,
		\item $\{D_\nn(a)\}_n\sim_{\textup{GLT}}\kappa(\xx,\btheta)=a(\xx)$ if $a:[0,1]^d\to\mathbb C^{r\times s}$ is continuous a.e., 
		\item $\{Z_\nn\}_n\sim_{\textup{GLT}}\kappa(\xx,\btheta)=O_{r,s}$ if and only if $\{Z_\nn\}_n\sim_\sigma0$.
	\end{itemize}
	\item[\underline{\textbf{GLT\,4}}\textbf{.}] 
	Suppose that $\{A_\nn\}_n\sim_{\textup{GLT}}\kappa$ and $\{B_\nn\}_n\sim_{\textup{GLT}}\xi$, where in this case $\kappa$ and $\xi$ may have sizes different from $r\times s$ and different from each other. Then,
	\begin{itemize}[leftmargin=*]
		\item $\{A_\nn^*\}_n\sim_{\textup{GLT}}\kappa^*$,
		\item $\{\alpha A_\nn+\beta B_\nn\}_n\sim_{\textup{GLT}}\alpha\kappa+\beta\xi$ for all $\alpha,\beta\in\mathbb C$ if $\kappa$ and $\xi$ are summable,
		\item $\{A_\nn B_\nn\}_n\sim_{\textup{GLT}}\kappa\xi$ if $\kappa$ and $\xi$ are multipliable,
		\item $\{A_\nn^\dag\}_n\sim_{\textup{GLT}}\kappa^\dag$ if $\kappa$ has full rank a.e.
	\end{itemize}
	\item[\underline{\textbf{GLT\,6}}\textbf{.}] If $\{A_\nn=[a_{\ii\jj}^{(\nn)}]_{\ii,\jj=\bu}^\nn\}_n$ is a $d$-level $(r,s)$-block GLT sequence with symbol $\kappa$ and we restrict each $r\times s$ block $a_{\ii\jj}^{(\nn)}$ to the same $\tilde r\times\tilde s$ submatrix $\tilde a_{\ii\jj}^{(\nn)}$, then we obtain a $d$-level $(\tilde r,\tilde s)$-block GLT sequence $\{\tilde A_\nn=[\tilde a_{\ii\jj}^{(\nn)}]_{\ii,\jj=\bu}^\nn\}_n$ whose symbol $\tilde\kappa$ is the corresponding $\tilde r\times\tilde s$ submatrix of $\kappa$.
	
	If $\{A_{\nn,ij}=[a_{\ii\jj,ij}^{(\nn)}]_{\ii,\jj=\bu}^\nn\}_n$ is a $d$-level $(r_i,s_j)$-block GLT sequence with symbol $\kappa_{ij}$ for $i=1,\ldots,\varrho$ and $j=1,\ldots,\varsigma$, and if $A_\nn=[\,[a_{\ii\jj,ij}^{(\nn)}]_{i=1,\ldots,\varrho}^{j=1,\ldots,\varsigma}\,]_{\ii,\jj=\bu}^\nn$, then $\{A_\nn\}_n$ is a $d$-level $(r,s)$-block GLT sequence with symbol $\kappa=[\kappa_{ij}]_{i=1,\ldots,\varrho}^{j=1,\ldots,\varsigma}$, where $r=\sum_{i=1}^\varrho r_i$ and $s=\sum_{j=1}^\varsigma s_j$. Moreover, if $B_\nn=[A_{\nn,ij}]_{i=1,\ldots,\varrho}^{j=1,\ldots,\varsigma}$ 
	then
	\[ \Bigl(P_{r,N(\nn)}\mathop{\textup{diag}}_{i=1,\ldots,\varrho}P_{r_i,N(\nn)}^T\Bigr)\,B_\nn\,\Bigl(P_{s,N(\nn)}\mathop{\textup{diag}}_{j=1,\ldots,\varsigma}P_{s_j,N(\nn)}^T\Bigr)^T=A_\nn, \]
	where $P_{k_1,k_2}$ is the permutation matrix defined in \eqref{Pk1k2}.
	\item[\underline{\textbf{GLT\,7}}\textbf{.}] $\{A_\nn\}_n\sim_{\textup{GLT}}\kappa$ if and only if there exist 
	$\{B_{\nn,m}\}_n\sim_{\textup{GLT}}\kappa_m$ such that $\{B_{\nn,m}\}_n$\linebreak$\xrightarrow{\textup{a.c.s.}}\{A_\nn\}_n$ and $\kappa_m\to\kappa$ in measure.
	\item[\underline{\textbf{GLT\,8}}\textbf{.}] Suppose $\{A_\nn\}_n\sim_{\textup{GLT}}\kappa$ and $\{B_{\nn,m}\}_n\sim_{\textup{GLT}}\kappa_m$. Then, $\{B_{\nn,m}\}_n\xrightarrow{\textup{a.c.s.}}\{A_\nn\}_n$ if and only if $\kappa_m\to\kappa$ in measure.
	\item[\underline{\textbf{GLT\,9}}\textbf{.}] If $\{A_\nn\}_n\sim_{\textup{GLT}}\kappa$ then there exist functions $a_{i,m},f_{i,m},\ i=1,\ldots,N_m$, such~that
	\begin{itemize}[leftmargin=*]
		\item $a_{i,m}:[0,1]^d\to\mathbb C$ belongs to $C^\infty([0,1]^d)$ and $f_{i,m}$ is a trigonometric monomial in $\{{\rm e}^{{\rm i} \jj\cdot\btheta}E_{\alpha\beta}^{(r,s)}:\jj\in\mathbb Z^d,\ 1\le\alpha\le r,\ 1\le\beta\le s\}$, 
		\item $\kappa_m(\xx,\btheta)=\sum_{i=1}^{N_m}a_{i,m}(\xx)f_{i,m}(\btheta)\to\kappa(\xx,\btheta)$ a.e., \vspace{3pt}
		\item $\{B_{\nn,m}\}_n=\bigl\{\sum_{i=1}^{N_m}D_\nn(a_{i,m}I_r)T_\nn(f_{i,m})\bigr\}_n\xrightarrow{\textup{a.c.s.}}\{A_\nn\}_n$.
	\end{itemize}
\end{enumerate}

\section{Application to higher-order FE discretizations of systems of DEs}\label{appl}
We provide in this section an example of application of the theory of rectangular GLT sequences in the context of higher-order FE discretizations of systems of differential equations (DEs). The proposed example is an adapted version of the problems considered in \cite{ashkanCMAME,NM-iNS}, which in fact inspired the writing of this paper.

\subsection{Problem formulation} Consider the following system of DEs: 
\begin{equation*}
\left\{
\begin{aligned}
-(a(x)u'(x))'+v'(x)&=f(x), &\quad x\in(0,1),\\[3pt]
-u'(x)-\rho v(x)&=g(x), &\quad x\in(0,1),\\[3pt]
u(0)=0,\quad u(1)=0,& \\[3pt]
v(0)=0,\quad v(1)=0,&
\end{aligned}\right.
\end{equation*}
where $\rho$ is a constant and $a\in L^1([0,1])$. 
The corresponding weak form 
reads as follows: find $u,v\in H^1_0([0,1])$ such that, for all $w\in H^1_0([0,1])$, 
\begin{equation*}
\left\{\begin{aligned}
\textstyle{\int_0^1a(x)u'(x)w'(x){\rm d} x + \int_0^1v'(x)w(x){\rm d} x} &= \textstyle{\int_0^1f(x)w(x){\rm d} x,}\\[3pt]
\textstyle{-\int_0^1u'(x)w(x){\rm d} x - \rho\int_0^1v(x)w(x){\rm d} x} &= \textstyle{\int_0^1g(x)w(x){\rm d} x.}
\end{aligned}\right.
\end{equation*}

\subsection{Galerkin discretization}
We look for approximations $u_\UU,v_\VV$ of $u,v$ by choosing two finite dimensional vector spaces $\UU,\VV\subset H^1_0([0,1])$ and solving the following discrete problem: find $u_\UU\in\UU$ and $v_\VV\in\VV$ such that, for all $U\in\UU$ and $V\in\VV$,
\begin{equation*}
\left\{\begin{aligned}
\textstyle{\int_0^1a(x)u_\UU'(x)U'(x){\rm d}x + \int_0^1v_\VV'(x)U(x){\rm d}x} &= \textstyle{\int_0^1f(x)U(x){\rm d}x,}\\[3pt]
\textstyle{-\int_0^1u'(x)V(x){\rm d}x - \rho\int_0^1v(x)V(x){\rm d}x} &= \textstyle{\int_0^1g(x)V(x){\rm d} x.}
\end{aligned}\right.
\end{equation*}
Let $\{\varphi_1,\ldots,\varphi_N\}$ be a basis of $\UU$ and let $\{\psi_1,\ldots,\psi_M\}$ be a basis of $\VV$. Then, we can write $u_\UU=\sum_{j=1}^Nu_j\varphi_j$ and $v_\VV=\sum_{j=1}^Mv_j\psi_j$ for unique vectors $\uu=(u_1,\ldots,u_N)^T$ and $\vv=(v_1,\ldots,v_M)^T$. By linearity, the computation of $u_\UU,v_\VV$ (i.e., of $\uu,\vv$) reduces to solving the linear system
\[ A_{N,M}\begin{bmatrix}\uu\\\vv\end{bmatrix}=\begin{bmatrix}\mathbf f\\\mathbf g\end{bmatrix}, \]
where $\mathbf f=\bigl[\int_0^1f(x)\varphi_i(x){\rm d}x\bigr]_{i=1}^N$, $\mathbf g=\bigl[\int_0^1g(x)\psi_i(x){\rm d}x\bigr]_{i=1}^M$,
\begin{equation}\label{anm}
A_{N,M}=\begin{bmatrix}A_N(1,1) & A_{N,M}(1,2)\\
A_{N,M}(2,1) & A_M(2,2)\end{bmatrix}=\begin{bmatrix}A_N(1,1) & A_{N,M}(1,2)\\
(A_{N,M}(1,2))^T & A_M(2,2)\end{bmatrix},
\end{equation}
and
{\allowdisplaybreaks\begin{align}
\label{an11}A_N(1,1)&=\left[\int_0^1a(x)\varphi_j'(x)\varphi_i'(x){\rm d}x\right]_{i,j=1}^N,\\
\label{anm12}A_{N,M}(1,2)&=\left[\int_0^1\psi_j'(x)\varphi_i(x){\rm d}x\right]_{i=1,\ldots,N}^{j=1,\ldots,M},\\
\label{anm21}A_{N,M}(2,1)&=\left[-\int_0^1\varphi_j'(x)\psi_i(x){\rm d}x\right]_{i=1,\ldots,M}^{j=1,\ldots,N}\\*
\notag&=\left[\int_0^1\varphi_j(x)\psi_i'(x){\rm d}x\right]_{i=1,\ldots,M}^{j=1,\ldots,N}=(A_{N,M}(1,2))^T,\\
\label{am22}A_M(2,2)&=\left[-\rho\int_0^1\psi_j(x)\psi_i(x){\rm d}x\right]_{i,j=1}^M.
\end{align}}%
Assuming that $A_{N,M}(1,1)$ is invertible, the Schur complement of $A_{N,M}$ is the symmetric matrix given by
\begin{align}\label{snm}
S_{N,M}&=A_M(2,2)-A_{N,M}(2,1)(A_N(1,1))^{-1}A_{N,M}(1,2)\\
\notag&=A_M(2,2)-(A_{N,M}(1,2))^T(A_N(1,1))^{-1}A_{N,M}(1,2).
\end{align}

\begin{remark}
Suppose that $N=N_n$ and $M=M_n$ depend on a unique fineness parameter $n$.
If $\UU=\VV$ and $\{\varphi_1,\ldots,\varphi_N\}=\{\psi_1,\ldots,\psi_M\}$, then the sequence $\{A_{N,M}(i,j)\}_n$ is, up to minor transformations, a square GLT sequence for every $i,j=1,2$. In this case, the spectral distributions of $\{A_{N,M}\}_n$ and $\{S_{N,M}\}_n$ 
can be computed through the theory of square GLT sequences, without resorting to rectangular GLT sequences; see \cite[Section~6.4]{bg} and \cite[Section~10.6.2]{GLT-bookI}. For stability reasons, however, it is often convenient to choose two different spaces $\UU,\VV$. This happens, for instance, when $\UU,\VV$ have to be chosen so that the Ladyzhenskaya--Babu$\check{\mbox{s}}$ka--Brezzi (LBB) stability condition is met \cite{BF}, as in the Taylor--Hood FE discretizations \cite{ashkanCMAME}. If $\UU,\VV$ are FE spaces of different orders, then $\{A_{N,M}(i,j)\}_n$ is, up to minor transformations, a rectangular GLT sequence for $i\ne j$, and the computation of the spectral distributions of $\{A_{N,M}\}_n$ and $\{S_{N,M}\}_n$ requires the theory of rectangular GLT sequences (especially \underline{\bf GLT\,4} and \underline{\bf GLT\,6}, which allow us to ``connect'' GLT sequences with symbols of different size).
\end{remark}

\subsection{B-spline basis functions}
Following the higher-order FE approach, the basis functions $\varphi_1,\ldots,\varphi_N$ and $\psi_1,\ldots,\psi_M$ are chosen as piecewise polynomials of degree $\ge1$. More precisely, for $p,n\ge1$ and $0\le k\le p-1$, let $B_{1,[p,k]},\ldots,B_{n(p-k)+k+1,[p,k]}:\mathbb R\to\mathbb R$ be the B-splines of degree $p$ and smoothness $C^k$ defined on the knot sequence
\begin{align*}
&\{\tau_1,\ldots,\tau_{n(p-k)+p+k+2}\}\\
\notag&=\biggl\{\underbrace{0,\ldots,0}_{p+1},\ \underbrace{\frac1n,\ldots,\frac1n}_{p-k},\ \underbrace{\frac2n,\ldots,\frac2n}_{p-k},\ \ldots,\ \underbrace{\frac{n-1}n,\ldots,\frac{n-1}n}_{p-k},\ \underbrace{1,\ldots,1}_{p+1}\biggr\}.
\end{align*}
We collect here a few properties of $B_{1,[p,k]},\ldots,B_{n(p-k)+k+1,[p,k]}$ that we shall need later on. For the formal definition of B-splines, as well as for the proof of the properties listed below, see~\cite{CAN1}. For more on spline functions, see~\cite{deBoor,CAGDtools,Schumaker}.
\begin{itemize}[leftmargin=*]
	\item The support of the $i$th B-spline is given by
	\begin{equation}\label{eq:support}
	\textup{supp}(B_{i,[p,k]})=[\tau_i,\tau_{i+p+1}],\quad i=1,\ldots,n(p-k)+k+1.
	\end{equation}
	\item Except for the first and the last one, all the other B-splines vanish on the boundary of $[0,1]$, i.e.,
	\begin{equation*}
	B_{i,[p,k]}(0)=B_{i,[p,k]}(1)=0,\quad i=2,\ldots,n(p-k)+k. 
	\end{equation*}
	\item $\{B_{i,[p,k]}:i=1,\ldots,n(p-k)+k+1\}$ is a basis for the space of piecewise polynomial functions on $[0,1]$ of degree $p$ and smoothness~$C^k$, that is,
	\begin{equation*}
	\SS_{n,[p,k]}=\bigl\{s\in C^k([0,1]):s|_{\left[\frac in,\frac{i+1}n\right]}\in\mathbb P_p\,\mbox{ for }\,i=0,\ldots,n-1\bigr\},
	\end{equation*}
	where $\mathbb P_p$ is the space of polynomials of degree $\le p$. Moreover, $\{B_{i,[p,k]}:i=2,\ldots,$\linebreak$n(p-k)+k\}$ is a basis for the space 
	\begin{equation*}
	\SS_{n,[p,k]}^0=\{s\in\SS_{n,[p,k]}:s(0)=s(1)=0\}.
	\end{equation*}
	\item All the B-splines, except for the first $k+1$ and the last $k+1$, are uniformly shifted-scaled versions of $p-k$ fixed reference functions $\beta_{1,[p,k]},\ldots,\beta_{p-k,[p,k]}$, namely the first $p-k$ B-splines defined on the reference knot sequence
	\begin{equation*} 
	\underbrace{0,\ldots,0}_{p-k},\underbrace{1,\ldots,1}_{p-k},\ldots,\underbrace{\eta(p,k),\ldots,\eta(p,k)}_{p-k},\quad\eta(p,k)=\left\lceil\frac{p+1}{p-k}\right\rceil.
	\end{equation*}
	The precise formula we shall need later on is the following: setting 
	\begin{equation*}
	\nu(p,k)=\left\lceil\frac{k+1}{p-k}\right\rceil,
	\end{equation*}
	for the B-splines $B_{k+2,[p,k]},\ldots,B_{k+1+(n-\nu(p,k))(p-k),[p,k]}$ we have
	\begin{equation}\label{eq:uniform-shift-scale}
	\begin{aligned}
	&B_{k+1+(p-k)(r-1)+t,[p,k]}(x)=\beta_{t,[p,k]}(nx-r+1),\\
	&r=1,\ldots,n-\nu(p,k),\quad t=1,\ldots,p-k.
	\end{aligned}
	\end{equation}
	We point out that the supports of the reference B-splines $\beta_{t,[p,k]}$ satisfy
	\begin{equation}\label{refsupp}
	\textup{supp}(\beta_{1,[p,k]})\subseteq\textup{supp}(\beta_{2,[p,k]})\subseteq\ldots\subseteq\textup{supp}(\beta_{p-k,[p,k]})=[0,\eta(p,k)].
	\end{equation}%
	\begin{figure}
\centering
\includegraphics[width=\textwidth]{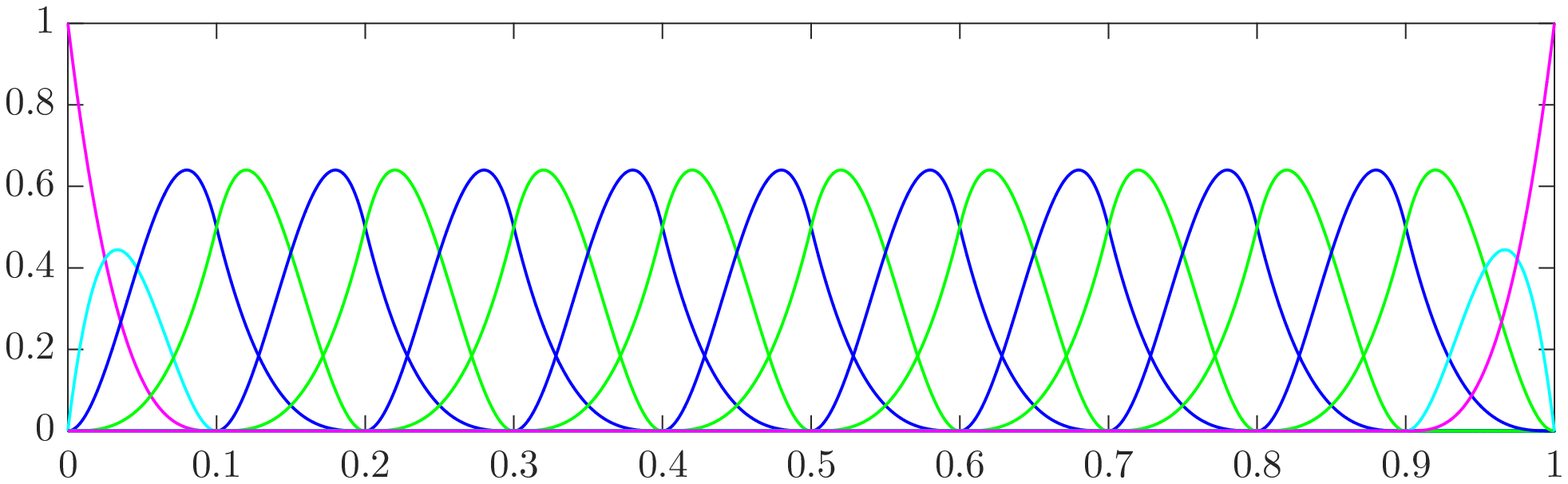}
\caption{B-splines $B_{1,[p,k]},\ldots,B_{n(p-k)+k+1,[p,k]}$ for $p=3$ and $k=1$, with $n=10$.}
\label{basis31}
\vspace{10pt}
\includegraphics[width=\textwidth]{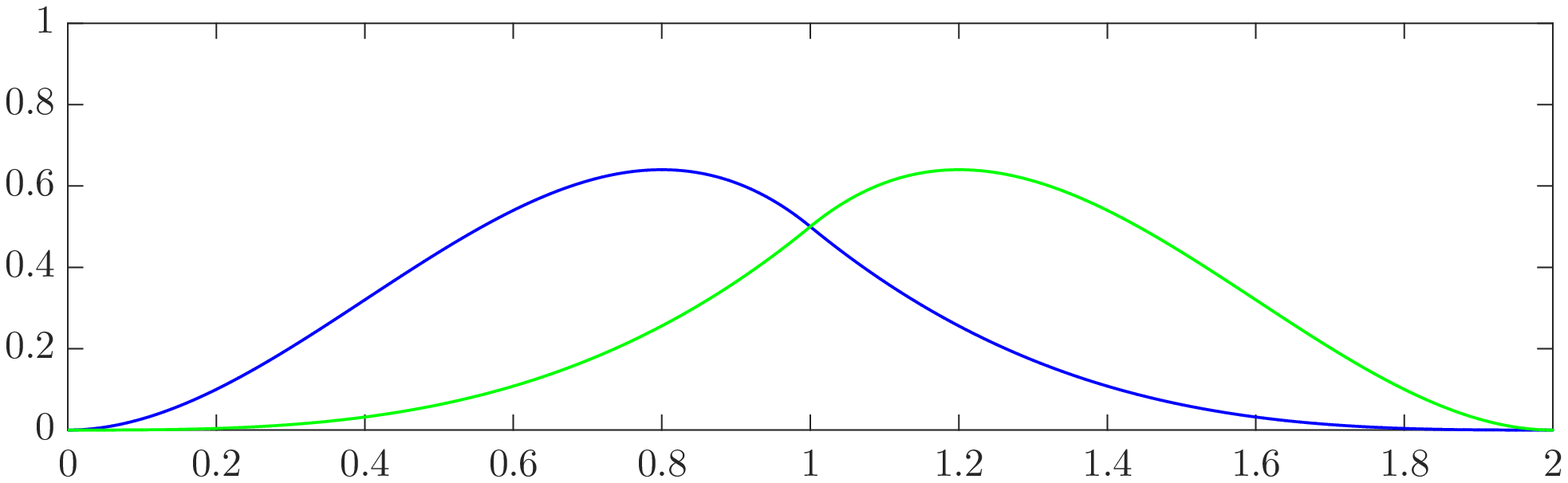}
\caption{Reference B-splines $\beta_{1,[p,k]},\beta_{2,[p,k]}$ for $p=3$ and $k=1$.}
\label{ref31}
\end{figure}%
	Figures~\ref{basis31}--\ref{ref31} display the graphs of the B-splines $B_{1,[p,k]},\ldots,B_{n(p-k)+k+1,[p,k]}$ for the degree $p=3$ and the smoothness $k=1$, and the graphs of the associated reference B-splines $\beta_{1,[p,k]},\beta_{2,[p,k]}$.
\end{itemize}
The basis functions $\varphi_1,\ldots,\varphi_N$ and $\psi_1,\ldots,\psi_M$ are defined as follows:
\begin{align}
\varphi_i&=B_{i+1,[p,k]},\quad i=1,\ldots,n(p-k)+k-1,\label{space-basisU}\\
\psi_i&=B_{i+1,[q,\ell]},\quad i=1,\ldots,n(q-\ell)+\ell-1.\label{space-basisV}
\end{align}
In particular, 
we have 
\begin{alignat*}{3}
\UU&=\textup{span}(\varphi_1,\ldots,\varphi_N)=\SS^0_{n,[p,k]}, &\quad N&=n(p-k)+k-1,\\
\VV&=\textup{span}(\psi_1,\ldots,\psi_M)=\SS^0_{n,[q,\ell]}, &\quad M&=n(q-\ell)+\ell-1.
\end{alignat*}

\subsection{GLT analysis of the higher-order FE discretization matrices}
The higher-order FE matrices \eqref{anm}--\eqref{snm} resulting from the choice of the basis functions as in \eqref{space-basisU}--\eqref{space-basisV} will be denoted by $A_n$, $A_n(1,1)$, $A_n(1,2)$, $A_n(2,1)$, $A_n(2,2)$, $S_n$, respectively. We therefore have $A_n(2,1)=(A_n(1,2))^T$ and
{\allowdisplaybreaks\begin{align*}
A_n&=\begin{bmatrix}A_n(1,1) & A_n(1,2)\\ (A_n(1,2))^T & A_n(2,2)\end{bmatrix},\\
A_n(1,1)&=\left[\int_0^1a(x)B'_{j+1,[p,k]}(x)B'_{i+1,[p,k]}(x){\rm d}x\right]_{i,j=1}^{n(p-k)+k-1},\\
A_n(1,2)&=\left[\int_0^1B'_{j+1,[q,\ell]}(x)B_{i+1,[p,k]}(x){\rm d}x\right]_{i=1,\ldots,n(p-k)+k-1}^{j=1,\ldots,n(q-\ell)+\ell-1},\\
A_n(2,2)&=\left[-\rho\int_0^1B_{j+1,[q,\ell]}(x)B_{i+1,[q,\ell]}(x){\rm d}x\right]_{i,j=1}^{n(q-\ell)+\ell-1},\\
S_n&=A_n(2,2)-(A_n(1,2))^T(A_n(1,1))^{-1}A_n(1,2).
\end{align*}}%
The main result of this section is Theorem~\ref{main}, which gives the singular value and spectral distributions of (properly normalized versions of) $\{A_n\}_n$ and $\{S_n\}_n$. If the sequences $\{n^{-1}A_n(1,1)\}_n$, $\{A_n(1,2)\}_n$, $\{nA_n(2,2)\}_n$
were exact (square or rectangular) GLT sequences, Theorem~\ref{main} would follow immediately from {\bf GLT\,1}, \underline{\bf GLT\,4} and \underline{\bf GLT\,6}. Unfortunately, the previous sequences 
are GLT sequences only up to minor transformations that, despite being minor, complicate the proof of Theorem~\ref{main} from a technical point of view. As we are going to see, the minor transformation we need to turn $\{n^{-1}A_n(1,1)\}_n$ into a GLT sequence is an expansion of each matrix $A_n(1,1)$ so as to reach the ``right'' size. 
The same applies to $\{A_n(1,2)\}_n$ and $\{nA_n(2,2)\}_n$. We remark that this expansion technique is quite common in the GLT context; see, e.g., \cite[Section~6]{bg} and \cite[Section~6]{bgd}.

\begin{notation}{\rm 
Fix a non-negative integer $m$ such that $m(p-k)\ge k$ and $m(q-\ell)\ge\ell$.\footnote{For example, take $m=k\vee\ell$.}
We denote by $\hat A_n(1,1)$ and $\hat A_n(2,2)$ the square block diagonal matrices obtained by expanding $A_n(1,1)$ and $A_n(2,2)$ as follows:
\begin{align*}
\hat A_n(1,1)&=\begin{bmatrix}I_{m(p-k)-k} & & \\ & A_n(1,1) & \\ & & \ \ \ 1\end{bmatrix}\in\mathbb R^{(n+m)(p-k)\times(n+m)(p-k)},\\
\hat A_n(2,2)&=\begin{bmatrix}I_{m(q-\ell)-\ell} & & \\ & A_n(2,2) & \\ & & \ \ \ 1\end{bmatrix}\in\mathbb R^{(n+m)(q-\ell)\times(n+m)(q-\ell)}.
\end{align*}
We denote by $\hat A_n(1,2)$ the rectangular block diagonal matrix obtained by expanding $A_n(1,2)$ as follows:
\[ \hat A_n(1,2)=\begin{bmatrix}O_{m(p-k)-k,m(q-\ell)-\ell} & & \\ & A_n(1,2) & \\ & & \ \ \ 0\end{bmatrix}\in\mathbb R^{(n+m)(p-k)\times(n+m)(q-\ell)}. \]
We denote by $\hat A_n$ and $\hat S_n$ the matrices obtained by expanding $A_n$ and $S_n$ as follows:
\begin{align*}
\hat A_n&=\begin{bmatrix}\hat A_n(1,1) & \hat A_n(1,2)\\ (\hat A_n(1,2))^T & \hat A_n(2,2)\end{bmatrix},\\
\hat S_n&=\hat A_n(2,2)-(\hat A_n(1,2))^T(\hat A_n(1,1))^{-1}\hat A_n(1,2); 
\end{align*}
see Figure~\ref{expansions}. 
\begin{figure}
\centering
\includegraphics[width=0.57\textwidth]{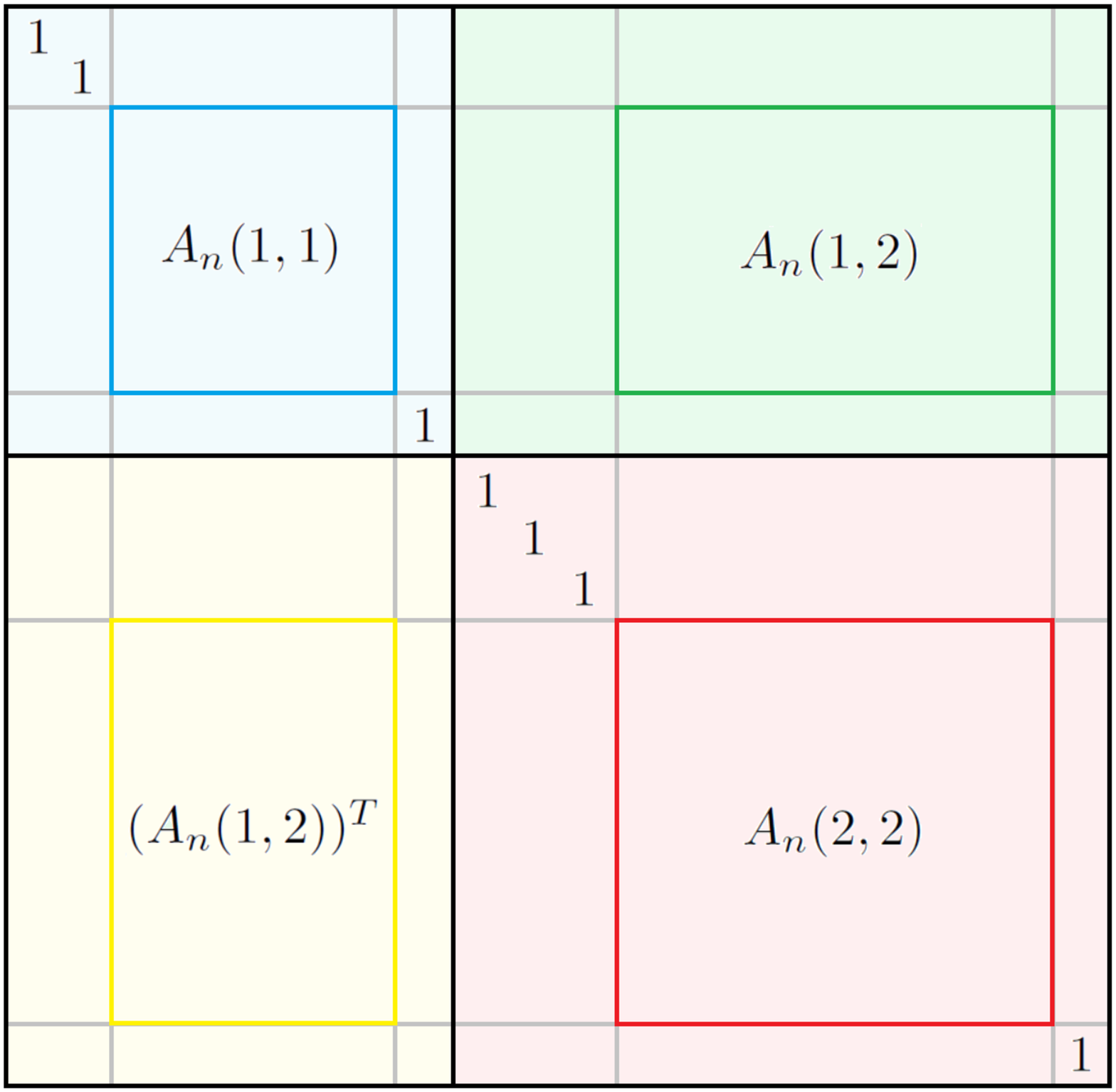}
\caption{Schematic representation of $\hat A_n$ in the case $m(p-k)-k=2$ and $m(q-\ell)-\ell=3$. The expanded matrices $\hat A_n(1,1)$, $\hat A_n(1,2)$, $(\hat A_n(1,2))^T$, $\hat A_n(2,2)$ are shaded respectively in azure, green, yellow, pink.}
\label{expansions}
\end{figure}
We define the blocks
\begin{alignat*}{3}
K_{[p,k]}^{[s]}&=\left[\int_{\mathbb R}\beta_{j,[p,k]}'(y)\beta_{i,[p,k]}'(y-s){\rm d}y\right]_{i,j=1}^{p-k}, &\quad s&\in\mathbb Z,\\
H_{[p,k;q,\ell]}^{[s]}&=\left[\int_{\mathbb R}\beta_{j,[q,\ell]}'(y)\beta_{i,[p,k]}(y-s){\rm d}y\right]_{i=1,\ldots,p-k}^{j=1,\ldots,q-\ell}, &\quad s&\in\mathbb Z,\\
M_{[q,\ell]}^{[s]}&=\left[\int_{\mathbb R}\beta_{j,[q,\ell]}(y)\beta_{i,[q,\ell]}(y-s){\rm d}y\right]_{i,j=1}^{q-\ell}, &\quad s&\in\mathbb Z,
\end{alignat*}
and the matrix-valued functions
\begin{alignat}{3}
\label{kappa.}\kappa_{[p,k]}&:[-\pi,\pi]\to\mathbb C^{(p-k)\times(p-k)}, &\quad\kappa_{[p,k]}(\theta)&=\sum_{s\in\mathbb Z}K_{[p,k]}^{[s]}{\rm e}^{{\rm i}s\theta},\\
\label{xi.}\xi_{[p,k;q,\ell]}&:[-\pi,\pi]\to\mathbb C^{(p-k)\times(q-\ell)}, &\quad\xi_{[p,k;q,\ell]}(\theta)&=\sum_{s\in\mathbb Z}H_{[p,k;q,\ell]}^{[s]}{\rm e}^{{\rm i}s\theta},\\
\label{mu.}\mu_{[q,\ell]}&:[-\pi,\pi]\to\mathbb C^{(q-\ell)\times(q-\ell)}, &\quad\mu_{[q,\ell]}(\theta)&=\sum_{s\in\mathbb Z}M_{[q,\ell]}^{[s]}{\rm e}^{{\rm i}s\theta}.
\end{alignat}
Due to the compact support of the reference B-splines 
(see \eqref{refsupp}), there are only a finite number of non-zero blocks $K_{[p,k]}^{[s]}$, $H_{[p,k;q,\ell]}^{[s]}$, $M_{[q,\ell]}^{[s]}$. Consequently, the series in \eqref{kappa.}--\eqref{mu.} are actually finite sums.}
\end{notation}

\begin{lemma}\label{real_main}
Let $a\in L^1([0,1])$, $\rho\in\mathbb R$, $p,q\ge1$, $0\le k\le p-1$ and $0\le\ell\le q-1$. Then,
\begin{align}
\label{GLT1,1}\{n^{-1}\hat A_n(1,1)\}_n&\sim_{\textup{GLT}}a(x)\kappa_{[p,k]}(\theta),\\
\label{GLT1,2}\{\hat A_n(1,2)\}_n&\sim_{\textup{GLT}}\xi_{[p,k;q,\ell]}(\theta),\\
\label{GLT2,1}\{(\hat A_n(1,2))^T\}_n&\sim_{\textup{GLT}}(\xi_{[p,k;q,\ell]}(\theta))^*,\\
\label{GLT2,2}\{n\hat A_n(2,2)\}_n&\sim_{\textup{GLT}}-\rho\mu_{[q,\ell]}(\theta).
\end{align}
\end{lemma}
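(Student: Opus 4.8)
The plan is to reduce everything to the square/rectangular GLT properties of Section~\ref{summ} by identifying the ``interior'' blocks of the expanded matrices with the blocks of suitable $1$-level block Toeplitz matrices, and then handling the (only genuinely variable) coefficient $a$ by an approximation argument. Throughout, all matrices involved are real and, since $d=1$, the relevant ``$d$-index'' is the single integer $\nn=n+m$, which tends to $\infty$ as $n\to\infty$; thus $\hat A_n(1,1)$ and $\hat A_n(2,2)$ are $1$-level $(p-k)$- and $(q-\ell)$-block matrix-sequences with level order $n+m$, while $\hat A_n(1,2)$ is a $1$-level $(p-k,q-\ell)$-block matrix-sequence with level order $n+m$. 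For clarity, write $\hat A_n(1,1;b)$ for the matrix $\hat A_n(1,1)$ with $a$ replaced by $b$.

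Step~1 (interior blocks are Toeplitz blocks). Using the uniform shift--scale property \eqref{eq:uniform-shift-scale}, i.e.\ $B_{k+1+(p-k)(r-1)+t,[p,k]}(x)=\beta_{t,[p,k]}(nx-r+1)$ and the analogue for $[q,\ell]$, together with the changes of variable $y=nx-r+1$ in the defining integrals, I would check that for all $r,r'$ in the respective ``uniform ranges'' and all admissible $t,t'$ the corresponding block-entries of $n^{-1}\hat A_n(1,1;1)$, of $\hat A_n(1,2)$, and of $n\hat A_n(2,2)$ equal, respectively, $K_{[p,k]}^{[r-r']}$, $H_{[p,k;q,\ell]}^{[r-r']}$, and $-\rho\,M_{[q,\ell]}^{[r-r']}$. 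The prescribed border blocks ($I_{m(p-k)-k}$ and a trailing $1$, resp.\ $O$ and a trailing $0$) and the hypotheses $m(p-k)\ge k$, $m(q-\ell)\ge\ell$ are exactly what aligns the uniform B-spline blocks with the block indices in $\{m+1,\dots,n+m-\nu(p,k)\}$ (resp.\ $\{m+1,\dots,n+m-\nu(q,\ell)\}$). Hence, with $\kappa_{[p,k]},\xi_{[p,k;q,\ell]},\mu_{[q,\ell]}$ as in \eqref{kappa.}--\eqref{mu.},
\begin{align*}
n^{-1}\hat A_n(1,1;1)&=T_{n+m}(\kappa_{[p,k]})+R_n^{(1)},\\
\hat A_n(1,2)&=T_{n+m}(\xi_{[p,k;q,\ell]})+R_n^{(2)},\\
n\hat A_n(2,2)&=T_{n+m}(-\rho\,\mu_{[q,\ell]})+R_n^{(3)},
\end{align*}
where each $R_n^{(\cdot)}$ is supported on $O(1)$ block rows and columns (the finitely many boundary blocks and the artificial border, their number being bounded by a constant depending only on $p,q,k,\ell,m$), hence has rank $O(1)=o(N(n+m))$. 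Therefore each $\{R_n^{(\cdot)}\}_n$ is zero-distributed, so $\{R_n^{(\cdot)}\}_n\sim_{\textup{GLT}}O$ by \underline{\textbf{GLT\,3}}; combining this with $\{T_{n+m}(\cdot)\}_n\sim_{\textup{GLT}}(\cdot)$ (again \underline{\textbf{GLT\,3}}) and the additivity in \underline{\textbf{GLT\,4}} gives \eqref{GLT1,2}, \eqref{GLT2,2}, and the special case $a\equiv1$ of \eqref{GLT1,1}. Finally, \eqref{GLT2,1} follows from \eqref{GLT1,2} and the conjugate-transpose part of \underline{\textbf{GLT\,4}}, since $(\hat A_n(1,2))^T=(\hat A_n(1,2))^*$.

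Step~2 (general $a\in L^1$ for \eqref{GLT1,1}). I would pick $a_m\in C^\infty([0,1])$ with $a_m\to a$ in $L^1([0,1])$. For fixed $m$, freezing $a_m$ on each B-spline support and using its uniform continuity together with Step~1 yields
\[ n^{-1}\hat A_n(1,1;a_m)=D_{n+m}(a_m I_{p-k})\,T_{n+m}(\kappa_{[p,k]})+E_{n,m}+\tilde R_{n,m},\qquad \|E_{n,m}\|\xrightarrow{n\to\infty}0,\quad \operatorname{rank}(\tilde R_{n,m})=o(n). \]
On the other hand, the $O(1)$-band structure of $A_n(1,1)$, the pointwise bound $|\int_0^1(a-a_m)\,B'_iB'_j|\le C\,n^2\int_{\operatorname{supp}B_i}|a-a_m|$ (from $\|B'_i\|_\infty=O(n)$), and the bounded-overlap property $\sum_i\mathbf 1_{\operatorname{supp}B_i}\le p+1$ give the trace-norm estimate $\|n^{-1}\hat A_n(1,1;a)-n^{-1}\hat A_n(1,1;a_m)\|_1\le C\,\|a-a_m\|_{L^1}\,N(n+m)$. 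Splitting this trace-norm-small difference into a low-rank part plus a spectrally small part (the number of singular values exceeding $\varepsilon$ is at most $\|\cdot\|_1/\varepsilon$; then optimize $\varepsilon$) and absorbing $E_{n,m}$ and $\tilde R_{n,m}$, one gets, with $B_{n,m}:=D_{n+m}(a_m I_{p-k})\,T_{n+m}(\kappa_{[p,k]})$,
\[ \{B_{n,m}\}_n\xrightarrow{\textup{a.c.s.}}\{n^{-1}\hat A_n(1,1)\}_n. \]
Since $\{B_{n,m}\}_n\sim_{\textup{GLT}}a_m(x)\kappa_{[p,k]}(\theta)$ by \underline{\textbf{GLT\,3}}--\underline{\textbf{GLT\,4}} and $a_m(x)\kappa_{[p,k]}(\theta)\to a(x)\kappa_{[p,k]}(\theta)$ in measure (as $a_m\to a$ in measure and $\kappa_{[p,k]}$ is bounded), \underline{\textbf{GLT\,7}} yields $\{n^{-1}\hat A_n(1,1)\}_n\sim_{\textup{GLT}}a(x)\kappa_{[p,k]}(\theta)$, which is \eqref{GLT1,1}.

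The computations in Step~1 are routine but require careful index bookkeeping to confirm that, after the prescribed expansion, precisely $O(1)$ blocks fail to match the Toeplitz pattern — this is exactly where the hypotheses $m(p-k)\ge k$, $m(q-\ell)\ge\ell$ and the explicit border blocks enter. The main obstacle, however, is the trace-norm estimate in Step~2 for merely $L^1$ coefficients: one must verify that the unavoidable factor $n^2$ coming from $\|B'_i\|_\infty=O(n)$ is compensated by the band width $O(1)$ and the bounded-overlap property, so that the trace norm of the difference is $O(\|a-a_m\|_{L^1})\cdot N(n+m)$ — i.e.\ genuinely $o(N(n+m))$ uniformly in $n$ once $m$ is large, as an a.c.s.\ estimate requires — rather than growing faster than the matrix size.
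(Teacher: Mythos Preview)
Your proposal is correct and, for the only part the paper actually proves in detail---namely \eqref{GLT1,2}---it follows essentially the same route: identify the interior blocks with those of $T_{n+m}(\xi_{[p,k;q,\ell]})$ via the shift--scale formula \eqref{eq:uniform-shift-scale}, observe that the discrepancy is a finite-rank (hence zero-distributed) correction, and conclude by \underline{\textbf{GLT\,3}}--\underline{\textbf{GLT\,4}}; the derivation of \eqref{GLT2,1} from \eqref{GLT1,2} via the conjugate-transpose rule is identical.

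The difference is in scope rather than method. The paper does not reprove \eqref{GLT1,1} and \eqref{GLT2,2} at all: it simply invokes \cite[Lemma~6.12]{bg}, since these are square GLT statements already established in the block-GLT literature. Your Step~1 handles \eqref{GLT2,2} by the same Toeplitz-plus-low-rank argument (which is fine), and your Step~2 supplies a self-contained proof of \eqref{GLT1,1} for general $a\in L^1$ via smooth approximation, the trace-norm bound $\|n^{-1}\hat A_n(1,1;a)-n^{-1}\hat A_n(1,1;a_m)\|_1\le C\,\|a-a_m\|_{L^1}\cdot N(n+m)$, and the standard rank/norm splitting that yields an a.c.s. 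This is precisely the mechanism underlying the cited lemma, so what you have written is a correct expansion of the reference rather than a genuinely different argument. Your closing caveat about the factor $n^2$ is resolved exactly as you suspect: decomposing the banded difference into its $O(1)$ diagonals and using $\sum_i\mathbf 1_{\operatorname{supp}B_i}\le p+1$ gives $\|\,\cdot\,\|_1\le C n\,\|a-a_m\|_{L^1}$, which is indeed $O(N(n+m))\cdot\|a-a_m\|_{L^1}$.
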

\begin{proof}
We only have to prove \eqref{GLT1,2}. Indeed, $\{n^{-1}\hat A_n(1,1)\}_n$ and $\{n\hat A_n(2,2)\}_n$ are square GLT sequences and the proofs of \eqref{GLT1,1} and \eqref{GLT2,2} have already been given in \cite[Lemma~6.12]{bg}. Moreover, the GLT relation \eqref{GLT2,1} follows immediately from \eqref{GLT1,2} and \underline{\bf GLT\,4} (take into account that $(\hat A_n(1,2))^T=(\hat A_n(1,2))^*$ because $\hat A_n(1,2)$ is real). 

Let us then prove \eqref{GLT1,2}. By \eqref{eq:support}--\eqref{eq:uniform-shift-scale}, for every $r=1,\ldots,n-\nu(p,k),\ R=1,\ldots,$\linebreak$n-\nu(q,\ell)$ and every $t=1,\ldots,p-k,\ T=1,\ldots,q-\ell$, we have
\begin{align*}
&(\hat A_n(1,2))_{(p-k)(m+r-1)+t,(q-\ell)(m+R-1)+T}\\
&=(\hat A_n(1,2))_{[m(p-k)-k]+k+(p-k)(r-1)+t,[m(q-\ell)-\ell]+\ell+(q-\ell)(R-1)+T}\\
&=(A_n(1,2))_{k+(p-k)(r-1)+t,\ell+(q-\ell)(R-1)+T}\\
&=\int_0^1B_{\ell+1+(q-\ell)(R-1)+T,[q,\ell]}'(x)B_{k+1+(p-k)(r-1)+t,[p,k]}(x){\rm d}x\\
&=\int_{\mathbb R}B_{\ell+1+(q-\ell)(R-1)+T,[q,\ell]}'(x)B_{k+1+(p-k)(r-1)+t,[p,k]}(x){\rm d}x\\
&=\int_{\mathbb R}n\beta_{T,[q,\ell]}'(nx-R+1)\beta_{t,[p,k]}(nx-r+1){\rm d}x\\
&=\int_{\mathbb R}\beta_{T,[q,\ell]}'(y)\beta_{t,[p,k]}(y-r+R){\rm d}y=(H_{[p,k;q,\ell]}^{[r-R]})_{tT}\\
&=(T_{n+m}(\xi_{[p,k;q,\ell]}))_{(p-k)(m+r-1)+t,(q-\ell)(m+R-1)+T}.
\end{align*}
This means that the submatrix of $\hat A_n(1,2)$ corresponding to the row indices 
\[ i=m(p-k)+1,\ldots,(n+m-\nu(p,k))(p-k) \]
and the column indices 
\[ j=m(q-\ell)+1,\ldots,(n+m-\nu(q,\ell))(q-\ell) \]
coincides with the corresponding submatrix of $T_{n+m}(\xi_{[p,k;q,\ell]})$. Thus,
\[ \hat A_n(1,2) = T_{n+m}(\xi_{[p,k;q,\ell]})+R_n, \]
where $\textup{rank}(R_n)\le(m+\nu(p,k))(p-k)+(m+\nu(q,\ell))(q-\ell)=o(n)$. As a consequence, $\{R_n\}_n\sim_\sigma0$ by Definition~\ref{0d}. 
The thesis \eqref{GLT1,2} now follows from \underline{\bf GLT\,3}\,--\,\underline{\bf GLT\,4}.
\end{proof}

\begin{theorem}\label{main}
Let $a\in L^1([0,1])$, $\rho\in\mathbb R$, $p,q\ge1$, $0\le k\le p-1$ and $0\le\ell\le q-1$. Then,
\begin{equation}\label{An_sl}
\left\{\begin{bmatrix}
n^{-1}A_n(1,1) & A_n(1,2)\\
(A_n(1,2))^T & nA_n(2,2)
\end{bmatrix}\right\}_n\sim_{\sigma,\lambda}\begin{bmatrix}
a(x)\kappa_{[p,k]}(\theta) & \xi_{[p,k;q,\ell]}(\theta)\\
(\xi_{[p,k;q,\ell]}(\theta))^* & -\rho\mu_{[q,\ell]}(\theta)
\end{bmatrix}.
\end{equation}
Moreover, if the matrices $A_n(1,1)$ are invertible and $a\ne0$ a.e., then
\begin{equation}\label{Sn_sl}
\{nS_n\}_n\sim_{\sigma,\lambda}-\rho\mu_{[q,\ell]}(\theta)-\frac{(\xi_{[p,k;q,\ell]}(\theta))^*(\kappa_{[p,k]}(\theta))^{-1}\xi_{[p,k;q,\ell]}(\theta)}{a(x)}.
\end{equation}
\end{theorem}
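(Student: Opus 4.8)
The plan is to transfer everything to the expanded (``hatted'') matrices of Lemma~\ref{real_main}, which are genuine (square or rectangular) GLT sequences; to assemble them into a single square GLT sequence via \underline{\bf GLT\,6}; to read off the singular value and spectral distributions from \textbf{GLT\,1}; and finally to pass back to the original matrices by an elementary argument, using that the expansion inserts a number of extra rows and columns that is bounded uniformly in $n$. Throughout, all the expanded blocks are $1$-level block matrix-sequences with the common level order $n+m$.

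To prove \eqref{An_sl}, consider the block matrix $\hat B_n=\begin{bmatrix} n^{-1}\hat A_n(1,1) & \hat A_n(1,2)\\ (\hat A_n(1,2))^{T} & n\hat A_n(2,2)\end{bmatrix}$, whose blocks are, in order, $1$-level $(p-k)$-, $(p-k,q-\ell)$-, $(q-\ell,p-k)$- and $(q-\ell)$-block matrix-sequences. By Lemma~\ref{real_main} they are GLT sequences with symbols $a(x)\kappa_{[p,k]}(\theta)$, $\xi_{[p,k;q,\ell]}(\theta)$, $(\xi_{[p,k;q,\ell]}(\theta))^{*}$, $-\rho\mu_{[q,\ell]}(\theta)$; hence Theorem~\ref{GLT6plus3}, applied with $\varrho=\varsigma=2$, $r_1=s_1=p-k$, $r_2=s_2=q-\ell$, shows that the reshuffled matrix $A_\nn$ of \eqref{Prel} is a (square, in the sense of \cite{bgd} by the Remark following Definition~\ref{rGLT}) GLT sequence with symbol $\mathcal K(x,\theta)=\begin{bmatrix} a(x)\kappa_{[p,k]}(\theta) & \xi_{[p,k;q,\ell]}(\theta)\\ (\xi_{[p,k;q,\ell]}(\theta))^{*} & -\rho\mu_{[q,\ell]}(\theta)\end{bmatrix}$, and that, since $r=s$ and $r_i=s_i$, \eqref{Prel} is an orthogonal similarity. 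As $\hat B_n$ is real symmetric (symmetric diagonal blocks, off-diagonal blocks transposes of each other), so is $A_\nn$, and the two share the same singular values and eigenvalues. By \textbf{GLT\,1} applied to the square Hermitian GLT sequence $\{A_\nn\}_n$, we get $\{A_\nn\}_n\sim_{\sigma,\lambda}\mathcal K$ (so $\mathcal K$ is Hermitian a.e.), hence $\{\hat B_n\}_n\sim_{\sigma,\lambda}\mathcal K$. It remains to ``de-hat''. Since $\hat A_n(1,1),\hat A_n(2,2)$ are block diagonal and $\hat A_n(1,2)$ vanishes identically on the rows and columns introduced by the expansion, $\hat B_n$ is, up to a permutation similarity, the block-diagonal matrix with diagonal blocks $B_n$ (the matrix on the left-hand side of \eqref{An_sl}) and a diagonal matrix $D_n$ of fixed size $c=(m(p-k)-k+1)+(m(q-\ell)-\ell+1)$ whose entries equal $n^{-1}$ (with multiplicity $m(p-k)-k+1$) and $n$ (with multiplicity $m(q-\ell)-\ell+1$). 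Thus, for every $F\in C_c(\mathbb R)$ (resp.\ $C_c(\mathbb C)$), the averaged sum of $F$ over the singular values (resp.\ eigenvalues) of $\hat B_n$ differs from that of $B_n$ only by $\tfrac1{d_n}\bigl[(m(p-k)-k+1)F(n^{-1})+(m(q-\ell)-\ell+1)F(n)\bigr]$ with $d_n=\textup{size}(\hat B_n)$; this tends to $0$ because $F$ has bounded support ($F(n)=0$ for large $n$, $F(n^{-1})$ bounded), while $\textup{size}(\hat B_n)/\textup{size}(B_n)\to1$. Hence $\{B_n\}_n\sim_{\sigma,\lambda}\mathcal K$, which is \eqref{An_sl}.

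For \eqref{Sn_sl} one argues in the same spirit. From $(n^{-1}\hat A_n(1,1))^{-1}=n(\hat A_n(1,1))^{-1}$ and the definition of $\hat S_n$ we have $n\hat S_n=n\hat A_n(2,2)-(\hat A_n(1,2))^{T}(n^{-1}\hat A_n(1,1))^{-1}\hat A_n(1,2)$, and, using again the block-diagonal structure of the expanded blocks, a short computation identifies $n\hat S_n$ with the block-diagonal matrix whose diagonal blocks are $nS_n$, $nI_{m(q-\ell)-\ell}$, and the scalar $n$. Since $a\ne0$ a.e.\ and $\kappa_{[p,k]}$ is invertible a.e.\ (a structural property of the B-spline symbol \eqref{kappa.}), the GLT symbol $a\kappa_{[p,k]}$ of $\{n^{-1}\hat A_n(1,1)\}_n$ has full rank a.e.; as $\hat A_n(1,1)$ is invertible whenever $A_n(1,1)$ is, \underline{\bf GLT\,4} gives $\{(n^{-1}\hat A_n(1,1))^{-1}\}_n\sim_{\textup{GLT}}a^{-1}\kappa_{[p,k]}^{-1}$. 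Combining this with \eqref{GLT1,2}, \eqref{GLT2,1}, \eqref{GLT2,2} via the product and sum parts of \underline{\bf GLT\,4} (all sizes compatible), we obtain $\{n\hat S_n\}_n\sim_{\textup{GLT}}-\rho\mu_{[q,\ell]}(\theta)-\dfrac{(\xi_{[p,k;q,\ell]}(\theta))^{*}(\kappa_{[p,k]}(\theta))^{-1}\xi_{[p,k;q,\ell]}(\theta)}{a(x)}$. The matrices $n\hat S_n$ being square and real symmetric, \textbf{GLT\,1} yields $\{n\hat S_n\}_n\sim_{\sigma,\lambda}$ to that symbol, and de-hatting exactly as before (the discarded block $nI_{m(q-\ell)-\ell}$ together with the scalar $n$ has the fixed size $m(q-\ell)-\ell+1$, and all of its singular values and eigenvalues equal $n$) gives \eqref{Sn_sl}.

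The main obstacle is precisely this ``de-hatting'' step: the passage from $A_n(i,j)$ to $\hat A_n(i,j)$ is not a bounded or low-rank perturbation in any naive sense, since the entries inserted by the expansion scale like $n$ or $n^{-1}$; what rescues the argument is that their number is $O(1)$, independent of $n$, so that against a compactly supported test function they contribute $o(1)$ to the normalized traces that define the singular value and spectral distributions. The other points needing (routine) care are that one is forced to work with the expanded sequences because $\{n^{-1}A_n(1,1)\}_n$, $\{A_n(1,2)\}_n$, $\{nA_n(2,2)\}_n$ are GLT only after expansion; that the symbols must be Hermitian a.e.\ to obtain the eigenvalue distribution and not merely the singular value distribution (which here comes for free from \textbf{GLT\,1} once Hermitian-ness of the matrices is observed); and that $a\kappa_{[p,k]}$ must be invertible a.e., which is exactly where the hypotheses $a\ne0$ a.e.\ and the a.e.\ nonsingularity of $\kappa_{[p,k]}$ are used.
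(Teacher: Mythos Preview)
Your proof is correct and follows essentially the same route as the paper's: expand to the hatted blocks, invoke Lemma~\ref{real_main} together with \underline{\bf GLT\,6} (resp.\ \underline{\bf GLT\,4}) to obtain a square Hermitian GLT sequence, apply \textbf{GLT\,1}, and then ``de-hat'' by observing that the expansion introduces only a bounded number of extra eigenvalues equal to $n$ or $n^{-1}$. Your treatment is in fact slightly more explicit than the paper's in two places---you spell out why the de-hatting step works against a compactly supported test function, and you flag the a.e.\ invertibility of $\kappa_{[p,k]}$ needed to invoke the pseudoinverse part of \underline{\bf GLT\,4}---but the argument is the same.
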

\begin{proof}
We first prove \eqref{An_sl}. Consider the matrix obtained from the left-hand side of \eqref{An_sl} by replacing $A_n(1,1)$, $A_n(1,2)$, $A_n(2,2)$ with $\hat A_n(1,1)$, $\hat A_n(1,2)$, $\hat A_n(2,2)$. By Lemma~\ref{real_main} and \underline{\bf GLT\,6},
\[ \left\{\Pi_n\begin{bmatrix}
n^{-1}\hat A_n(1,1) & \hat A_n(1,2)\\
(\hat A_n(1,2))^T & n\hat A_n(2,2)
\end{bmatrix}\Pi_n^T\right\}_n\sim_{\textup{GLT}}\begin{bmatrix}
a(x)\kappa_{[p,k]}(\theta) & \xi_{[p,k;q,\ell]}(\theta)\\
(\xi_{[p,k;q,\ell]}(\theta))^* & -\rho\mu_{[q,\ell]}(\theta)
\end{bmatrix}, \]
where $\{\Pi_n\}_n$ is a sequence of permutation matrices. Hence, by {\bf GLT\,1},
\begin{align}\label{Anhat_sl}
\left\{\begin{bmatrix}
n^{-1}\hat A_n(1,1) & \hat A_n(1,2)\\
(\hat A_n(1,2))^T & n\hat A_n(2,2)
\end{bmatrix}\right\}_n\sim_{\sigma,\lambda}\begin{bmatrix}
a(x)\kappa_{[p,k]}(\theta) & \xi_{[p,k;q,\ell]}(\theta)\\
(\xi_{[p,k;q,\ell]}(\theta))^* & -\rho\mu_{[q,\ell]}(\theta)
\end{bmatrix}.
\end{align}
Looking at Figure~\ref{expansions}, we see that the singular values (resp., eigenvalues) of the matrix in the left-hand side of \eqref{Anhat_sl} are given by the singular values (resp., eigenvalues) of the matrix in the left-hand side of \eqref{An_sl} plus $m(p-k)-k+1$ singular values (resp., eigenvalues) that are equal to $n^{-1}$ plus $m(q-\ell)-\ell+1$ singular values (resp., eigenvalues) that are equal to $n$. Since $m(p-k)-k+m(q-\ell)-\ell+2$ is $o(n)$, \eqref{An_sl} follows from \eqref{Anhat_sl} and Definition~\ref{def-distribution}.

We now prove \eqref{Sn_sl}. The proof is completely analogous to the proof of \eqref{An_sl}. Consider the matrix 
\begin{align*}
n\hat S_n&=n(\hat A_n(2,2)-(\hat A_n(1,2))^T(\hat A_n(1,1))^{-1}\hat A_n(1,2))\\
&=n\hat A_n(2,2)-(\hat A_n(1,2))^T(n^{-1}\hat A_n(1,1))^{-1}\hat A_n(1,2).
\end{align*}
By Lemma~\ref{real_main} and \underline{\bf GLT\,4},
\[ \{n\hat S_n\}_n\sim_{\textup{GLT}}-\rho\mu_{[q,\ell]}(\theta)-\frac{(\xi_{[p,k;q,\ell]}(\theta))^*(\kappa_{[p,k]}(\theta))^{-1}\xi_{[p,k;q,\ell]}(\theta)}{a(x)}. \]
Hence, by {\bf GLT\,1},
\begin{equation}\label{Snhat_sl}
\{n\hat S_n\}_n\sim_{\sigma,\lambda}-\rho\mu_{[q,\ell]}(\theta)-\frac{(\xi_{[p,k;q,\ell]}(\theta))^*(\kappa_{[p,k]}(\theta))^{-1}\xi_{[p,k;q,\ell]}(\theta)}{a(x)}.
\end{equation}
Looking at Figure~\ref{expansions}, we see that the singular values (resp., eigenvalues) of $n\hat S_n$ are given by the singular values (resp., eigenvalues) of $nS_n$ plus $m(q-\ell)-\ell+1$ singular values (resp., eigenvalues) that are equal to $n$. Since $m(q-\ell)-\ell+1$ is $o(n)$, \eqref{Sn_sl} follows from \eqref{Snhat_sl} and Definition~\ref{def-distribution}.
\end{proof}

\end{document}